\providecommand\@dotsep{5}
\def\listtodoname{List of Todos}
\def\listoftodos{\@starttoc{tdo}\listtodoname}
\numberwithin{equation}{section}
\newcommand{\e}{\varepsilon}
\newcommand{\R}{\mathbb{R}}
\newcommand{\N}{\mathbb{N}}
\DeclareMathOperator{\dive}{div}
\DeclareMathOperator{\supp}{supp}
\newtheorem{prop}{Proposition}[section]
\newtheorem{lem}{Lemma}[section]
\newtheorem{thm}{Theorem}[section]
\newtheorem{cor}{Corollary}[section]
\newtheorem{remark}{Remark}[section]
\begin{document}
\title[Concentrating solutions for fractional Schr\"odinger equations]{Concentrating solutions for 
a class of nonlinear fractional Schr\"odinger equations in $\R^{N}$}

\author[V. Ambrosio]{Vincenzo Ambrosio}
\address{Dipartimento di Ingegneria Industriale e Scienze Matematiche, Universit\`a Politecnica delle Marche, via Brecce Bianche n.12, 60131 Ancona (Italy)}
\email{v.ambrosio@univpm.it}

\keywords{Fractional Laplacian; concentrating solutions; penalization technique; variational methods}
\subjclass[2010]{35A15, 35B09, 35R11, 47G20, 45G05}

\begin{abstract}
We deal with the existence of positive solutions for the following fractional Schr\"odinger equation:
$$
\e ^{2s} (-\Delta)^{s} u + V(x) u = f(u) \mbox{ in } \R^{N},
$$
where $\varepsilon>0$ is a parameter, $s\in (0, 1)$, $N\geq 2$, $(-\Delta)^{s}$ is the fractional Laplacian operator, and $V:\R^{N}\rightarrow \R$ is a positive continuous function. Under the assumptions that the nonlinearity $f$ is either asymptotically linear or superlinear at infinity, we prove the existence of a family of positive solutions which concentrates at a local minimum
of $V$ as $\e$ tends to zero. 
\end{abstract}

\maketitle
\section{Introduction}

In this paper we investigate the existence and the concentration phenomenon of positive solutions for the following fractional equation:
\begin{equation}\label{P}
\varepsilon ^{2s} (-\Delta)^{s} u + V(x) u = f(u) \mbox{ in } \R^{N},
\end{equation}
where $\e>0$ is a parameter, $s\in (0, 1)$ and $N\geq 2$. \\
The external potential $V: \R^{N}\rightarrow \R$ is a locally H{\"o}lder continuous function and bounded below away from zero, that is, there exists $V_{0} >0$ such that
\begin{equation}\label{2.2}
V(x)\geq V_{0}>0 \quad \mbox{ for all } x\in \R^{N}.
\end{equation}
Concerning the nonlinearity $f:\R\rightarrow \R$, we assume that it satisfies the following basic assumptions:
\begin{compactenum}[($f1$)]
\item $f\in C^{1}(\R, \R)$; 
\item  $\lim_{t\rightarrow 0} \frac{f(t)}{t}=0$; 
\item there exists $p \in (1, \frac{N+2s}{N-2s})$ such that $\lim_{t\rightarrow \infty}\frac{f(t)}{t^p}=0$. 
\end{compactenum}
The nonlocal operator $(-\Delta)^{s}$ appearing in (\ref{P}) is the so-called fractional Laplacian, which can be defined, for any $u: \R^{N}\rightarrow \R$ smooth enough, by setting
$$
(-\Delta)^{s}u(x)=-\frac{C(N,s)}{2} \int_{\R^{N}} \frac{u(x+y)+u(x-y)-2u(x)}{|y|^{N+2s}} dy  \quad (x\in \R^{N}),
$$
where $C(N, s)$ is a dimensional constant depending only on $N$ and $s$; see  \cite{DPV}.\\
In the last decade, great attention has been devoted to the study of nonlinear elliptic problems involving fractional operators, due to their intriguing analytic structure and specially in view of several applications in many areas of the research such as crystal dislocation, finance, phase transitions, material sciences, chemical reactions, minimal surfaces, etc. For more details and applications on this subject we refer the interested reader to \cite{DPV, MBRS}.\\
One of the main reasons of studying (\ref{P}) is the search of standing wave solutions $\psi(t, x)=u(x) e^{-\frac{\imath c t}{\hbar}}$ for the following time-dependent fractional Schr\"odinger equation
\begin{equation}\label{2.3}
i \hbar \frac{\partial \Phi}{\partial t} = \frac{\hbar^{2}}{2m} (-\Delta)^{s} \Phi +
W(x) \Phi - g(|\Phi|)\Phi \mbox{ for } (t, x)\in \R\times \R^{N}. 
\end{equation}
Equation (\ref{2.3}) has been derived by Laskin in \cite{Laskin1, Laskin2}, and plays a fundamental role in quantum mechanics in the study of particles on stochastic fields modeled by L\'evy processes.

When $s=1$, equation (\ref{P}) becomes the classical Schr\"odinger equation
\begin{align}\label{Pe}
-\varepsilon ^{2} \Delta u + V(x) u = f(u) \mbox{ in } \R^{N},
\end{align}
for which the existence and the multiplicity of solutions has been extensively studied in the last thirty years by many authors; see \cite{AF, AMS, BW, BL1, FW, Oh, Rab, Wang}. \\
Rabinowitz in \cite{Rab} investigated the existence of positive solutions to (\ref{Pe}) for $\e>0$ small enough, under the assumption that $f$ satisfies the well-known Ambrosetti-Rabinowitz condition \cite{ambrosetti}, that is,

\begin{compactenum}[$(f4)$]
\item there exists $\mu >2$ such that $0< \mu F(t) \leq f(t)t $ for any $t>0$,
\end{compactenum}
where $\displaystyle{F(t)= \int_{0}^{t} f(\tau) d\tau}$, and the potential $V(x)$ satisfies the following global condition:
$$
\liminf_{|x|\rightarrow \infty} V(x)>\inf_{x\in \R^{N}} V(x).
$$
Wang \cite{Wang} showed that these solutions concentrate at global minimum points of $V(x)$.
Using a local mountain pass approach, Del Pino and Felmer in \cite{DF}, proved the existence of a single spike solution to (\ref{Pe}) which concentrates around a local minimum of $V$, by assuming that 
there exists a bounded open set $\Lambda$ in $\R^{N}$ such that
$$
\inf_{x\in \Lambda} V(x)<\min_{x\in \partial \Lambda} V(x),
$$
and considering nonlinearities $f$ satisfying $(f4)$ and the monotonicity assumption on $t \mapsto \frac{f(t)}{t}$.\\
Subsequently, Jeanjean and Tanaka \cite{JT2} introduced new variational methods to extend the results obtained in \cite{DF}, to a wider class of nonlinearities.

In the non-local setting, there are only few results concerning the existence and the concentration phenomena of solutions for the fractional equation (\ref{P}), maybe because many important techniques developed in the local framework cannot be adapted so easily to the fractional case.\\
Next, we recall some fundamental results related to the concentration phenomenon of solutions for the nonlinear fractional Schr\"odinger  equation (\ref{P}), obtained in recent years.

Chen and Zheng \cite{CZ}  studied, via the Liapunov-Schmidt reduction method, the concentration phenomenon for solutions of (\ref{P}) with $f(t)=|t|^{\alpha}t$, and under suitable limitations on the dimension $N$ of the space and the fractional powers $s$. Davila et al. \cite{DDPW} showed that if the potential $V$ satisfies
$$
V\in C^{1, \alpha}(\R^{N})\cap L^{\infty}(\R^{N}) \mbox{ and } \inf_{x\in \R^{N}} V(x)>0,
$$
then (\ref{P}) has multi-peak  solutions.
Fall et al. \cite{FFV} established necessary and sufficient conditions on the smooth potential $V$ in order to produce concentration of solutions of (\ref{P}) when the parameter $\e$ converges to zero.  In particular, when $V$ is coercive and has a unique global minimum, then ground-states concentrate at this point.
Alves and Miyagaki \cite{AM} investigated the existence and the concentration of positive solutions to (\ref{P}), via a penalization approach, under condition $(f4)$ and the assumption $f(t)/t$ is increasing in $(0, \infty)$. He and Zou \cite{HZ} used variational methods and the Ljusternik-Schnirelmann theory to  study \eqref{P} when $f(t)=g(t)+t^{2^{*}_{s}-1}$ and $g$ satisfies $(f4)$ and the monotonicity assumption on $g(t)/t$.
In \cite{A3} the author extended the results in \cite{AM} and \cite{HZ} obtaining the existence and the multiplicity of solutions to \eqref{P} when $f$ has subcritical or supercritical growth.
Finally, we would like also to mention to the papers \cite{A, A1, A2, A4, CW, CN, DMV, DPPV, FQT, FLS, Secchi1, Secchi2, SZ2} in which the existence and the multiplicity of solutions for different nonlinear fractional Schr\"odinger equations has been investigated by using several variational approaches.

Motivated by the above papers, in this work we aim to study the existence of positive solutions to (\ref{P}) concentrating around local minima of the potential $V(x)$, under the assumptions that the nonlinearity $f$ is asymptotically linear or superlinear at infinity, and without supposing the monotonicity of $f(t)/t$. We recall that the hypothesis $(f4)$ and the assumption $f(t)/t$ is increasing have a fundamental role in \cite{AM, A, HZ} to verify the boundedness of Palais-Smale sequences and to apply Nehari manifold arguments, respectively.

Now, we state our main result:
\begin{thm}\label{teorema principale}

Let us assume that $f(t)$ satisfies $(f1)$-$(f3)$ and either $(f4)$ 
or the following condition $(f5)$: 
\begin{compactenum}[(i)]
\item There exists $a\in(0, \infty]$ such that  $\lim_{t\rightarrow \infty}\frac{f(t)}{t}=a$.
\item There exists a constant $D\geq 1$ such that 
\begin{equation} \label{2.4}
\hat{F}(t)\leq D \hat{F}(\bar{t}) \quad 0\leq t\leq \bar{t},
\end{equation}
          where $\hat{F}(t)= \frac{1}{2}f(t)t - F(t)$.
\end{compactenum}
Let $\Lambda \subset \R^{N}$ be a bounded open set such that 
\begin{equation}\label{2.5}
\inf_{\Lambda} V < \min_{\partial \Lambda} V
\end{equation} 
and, when $a< \infty$ in $(f5)$, 
\begin{equation}\label{2.6}
\inf_{\Lambda}V< a.
\end{equation}
Then, there exists $\varepsilon_{0}>0$ such that, for any $\varepsilon \in (0, \varepsilon_{0}]$,
 equation (\ref{P}) admits a positive solution $u_{\varepsilon}(x)$. Moreover, if $x_{\e}$ denotes a global maximum point of $u_{\e}$, then we have
\begin{compactenum}[$(1)$]
\item $V(x_{\varepsilon})\rightarrow \inf_{x\in \Lambda} V(x)$;
\item there exists $C >0$ such that
      $$
      u_{\varepsilon}(x)\leq \frac{C \varepsilon^{N+2s}}{\varepsilon^{N+2s}+ |x-x_{\varepsilon}|^{N+2s}} 
      \quad \mbox{ for all } x\in \R^{N}. 
      $$
\end{compactenum}
\end{thm}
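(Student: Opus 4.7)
The plan is to adapt the Del Pino--Felmer penalization scheme \cite{DF} to the fractional setting, and to combine it with the Jeanjean--Tanaka variational machinery \cite{JT2} in order to handle nonlinearities satisfying neither the Ambrosetti--Rabinowitz condition $(f4)$ (in case $(f5)$) nor the monotonicity of $t\mapsto f(t)/t$.

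After the scaling $v(x)=u(\e x)$, problem \eqref{P} becomes
\begin{equation*}
(-\Delta)^{s}v+V(\e x)\,v=f(v)\quad\text{in }\R^{N}.
\end{equation*}
Following \cite{DF}, I would cut off $f$ outside $\Lambda$ at a level $\ell>0$, chosen so that the modified nonlinearity $\tilde f$ satisfies $\tilde f(t)/t\leq V_{0}/k$ for a sufficiently large constant $k>1$, and form the mixed nonlinearity $g(x,t)=\chi_{\Lambda}(x)\,f(t)+\bigl(1-\chi_{\Lambda}(x)\bigr)\tilde f(t)$. The associated penalized functional $J_{\e}$ is well defined and $C^{1}$ on the weighted space $H_{\e}:=\{v\in H^{s}(\R^{N}):\int_{\R^{N}} V(\e x)v^{2}\,dx<\infty\}$, and its critical points are solutions of the penalized problem.

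Next, I would verify that $J_{\e}$ enjoys a mountain pass geometry. Under $(f4)$ this is routine. Under $(f5)$, when $a<\infty$ the condition \eqref{2.6} is exploited by fixing $x_{0}\in\Lambda$ with $V(x_{0})=\inf_{\Lambda}V<a$ and comparing $J_{\e}$ with the autonomous limit functional having constant potential $V(x_{0})$; the latter has a finite least-energy level precisely because $V(x_{0})<a$, and a path descending from a ground state of the limit problem supplies the required geometry. I expect the core technical obstacle to be the boundedness of Palais--Smale sequences: in the absence of $(f4)$ the usual argument breaks down, so I would resort to Jeanjean's monotonicity trick coupled with inequality \eqref{2.4}, which plays the role of a generalized Ambrosetti--Rabinowitz condition, to produce bounded $(PS)$ sequences at almost every level and then recover a bounded sequence at the mountain pass level by a continuity/comparison argument.

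Once a nontrivial positive critical point $v_{\e}$ of $J_{\e}$ has been obtained, I would analyse its asymptotic profile. Letting $y_{\e}$ be a maximum point of $v_{\e}$, a concentration--compactness analysis combined with \eqref{2.5} shows that $\e y_{\e}$ remains in a compact subset of $\Lambda$ and that, up to translations, $v_{\e}(\cdot+y_{\e})$ converges in $H^{s}(\R^{N})$ to a positive ground state of the autonomous problem
\begin{equation*}
(-\Delta)^{s}w+V_{\ast}\,w=f(w),\qquad V_{\ast}=\lim V(\e y_{\e})=\inf_{\Lambda}V.
\end{equation*}
To turn $v_{\e}$ into a genuine solution of the original equation and to prove assertion $(2)$, I would establish an $L^{\infty}$ bound via fractional Moser iteration and then a polynomial decay estimate by using, as a barrier, a radial supersolution of the form $\Phi(x)=C/(1+|x|^{N+2s})$, exploiting the well-known decay of the Riesz-type kernel of $(-\Delta)^{s}$. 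This yields the bound in $(2)$ and, for $\e$ small, forces $v_{\e}\leq\ell$ outside $\Lambda/\e$, so that $g(\e x,v_{\e})=f(v_{\e})$ everywhere and $v_{\e}$ actually solves the un-penalized problem, completing the proof after undoing the scaling.
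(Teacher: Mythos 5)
Your overall architecture matches the paper's: penalize outside $\Lambda$, run a mountain pass argument on the rescaled functional $J_\e$, analyze the concentration of solutions as $\e\to 0$, establish uniform $L^\infty$ bounds by Moser iteration, and then use a power-type barrier to obtain the pointwise decay $(2)$, which in turn forces the penalization to be inactive and recovers a solution of the original problem. That skeleton is correct and is precisely what the paper does.

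Where you diverge, and where you also leave a gap, is the boundedness of the min-max sequences. You propose Jeanjean's monotonicity trick (a parametric family of functionals, bounded $(PS)$ sequences for a.e.\ parameter, and a continuity argument to recover the original level). The paper does not use that machinery. Instead it works directly with \emph{Cerami} sequences for $J_\e$ produced by the Cerami-type mountain pass theorem (Theorem \ref{teorema 3.1}), and proves boundedness by normalizing $w_\e=v_\e/\|v_\e\|_{H^s_\e}$ and running a concentration/vanishing dichotomy on $\chi_\e w_\e$. In the vanishing case, it is precisely the scale-comparison inequality \eqref{2.4} (through Corollary \ref{corollario 2.1}$(iv)$) together with the Cerami condition $\|J'_\e(v_\e)\|\|v_\e\|\to 0$ that yields a contradiction -- this is the Jeanjean--Tanaka route of \cite{JT2}, not the monotonicity trick. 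The direct Cerami argument is shorter here and avoids introducing the parametric family and the a.e.\ level argument; if you insist on the monotonicity trick you still need something like \eqref{2.4} at the end, so you would be doing strictly more work.

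The genuine gap is the asymptotically linear case $a<\infty$ under $(f5)$. In the ``non-vanishing'' branch of the dichotomy, the normalized sequence $w_\e(\cdot+x_\e)$ converges weakly to a nonzero $w_0\ge 0$ which, after passing to the limit in $\langle J'_\e(v_\e),\varphi\rangle$, satisfies a \emph{linear} eigenvalue equation of the type
\begin{equation*}
(-\Delta)^{s}w_{0}+V(x_{0})w_{0}=\bigl(\chi(x_{0})a+(1-\chi(x_{0}))\nu\bigr)w_{0}\quad\text{in }\R^{N}.
\end{equation*}
Ruling this out is a key and nontrivial step: in Lemma \ref{proposizione 3.2} it is dismissed via a fractional Pohozaev identity, and in the $\e$-fixed version (Lemma \ref{proposizione 3.1}) a more careful argument is needed using the Caffarelli--Silvestre extension and a comparison with a Steklov-type eigenvalue $\mu_R$ on a half-ball to reach a contradiction for $\e$ small. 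Your proposal does not address this at all; without it, boundedness in the case $a<\infty$ is not established, even granting the rest of your plan. Relatedly, your penalization cutoff $\tilde f$ should be built from a \emph{regular value} $\nu$ of $t\mapsto f(t)/t$ (so that the set $\{t>0: f(t)=\nu t\}$ is finite), and $\chi$ should be smooth rather than the indicator $\chi_\Lambda$, since both features are used in proving the quantitative inequality in Corollary \ref{corollario 2.1}$(iv)$ and the $C^1$ regularity of $J_\e$.
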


\medskip

\noindent
A common approach to tackle fractional nonlocal problems, is to make use of the extension method due to Caffarelli and Silvestre \cite{CS}, which allows us to transform a given nonlocal equation into a degenerate elliptic problem in the half-space with a nonlinear Neumann boundary condition. In this work, we prefer to investigate \eqref{P} directly in $H^{s}(\R^{N})$ in order to adapt to our framework some ideas used in \cite{JT2}. 
Anyway, the presence of the fractional Laplacian $(-\Delta)^{s}$, which is a nonlocal operator, induces several technical difficulties that will be overcome by developing some clever and appropriate arguments.

We would like to note that Theorem \ref{teorema principale} extends and improves the  result in \cite{AM}, because we do not require any monotonicity assumption on $f(t)/t$, and we are able to deal with a more general class of nonlinearities, including the asymptotically linear case (see condition $(f5)$). Moreover, our result is in clear accordance with that for the classical local counterpart, that is Theorem $1.1$ in \cite{JT2}. \\
We also point out that in contrast with the case $s=1$, the decay at infinity of solutions of (\ref{P}) is of power-type and not exponential; see \cite{FQT}.\\
Now, we give the main ideas for the proof of Theorem \ref{teorema principale}.
After rescaling equation (\ref{P}) with the change of variable $v(x)=u(\e x)$, we introduce a modified functional $J_{\e}$ and we prove that it satisfies a mountain pass geometry \cite{ambrosetti}. Then, we investigate the boundedness of Cerami sequences for $J_{\e}$, and we give two types of boundedness results: one when $\e$ is fixed, the other one to deduce uniform boundedness when $\e\rightarrow 0$. Through a careful study of the behavior as $\e\rightarrow 0$ of bounded Cerami sequences $(v_{\e})$, we prove that  there exists a  subsequence $(v_{\e_{j}})$ which converges, in a suitable sense, to a sum of translated critical points of certain autonomous functionals. This concentration-compactness type result will be useful to show that an appropriate translated sequence $v_{\e_{j}}(\cdot+y_{\e_{j}})$ converges to a least energy solution $\omega^{1}$. Thus, we exploit some results obtained in \cite{FQT} to deduce $L^{\infty}$-estimates (uniformly in $j\in \N$) and some information about the behavior at infinity of the translated sequence, which permit to obtain a positive solution of the rescaled equation.

\medskip

\noindent
The outline of the paper is the following: in Section $2$ we collect some preliminary results concerning the fractional Sobolev spaces and we introduce the variational setting. Moreover, we study the modified functionals $J_{\e}$. In Section $3$ we present some fundamental properties related to autonomous functionals. In Section $4$ we give a concentration-compactness type result. In the last section we provide the proof of Theorem \ref{teorema principale}.

\section{Preliminaries and functional setting}

\subsection{Fractional Sobolev spaces and some useful Lemmas}
\noindent
In this section we briefly recall some properties of the fractional Sobolev spaces, and we introduce some notations which we will use along the paper.\\
For any $s\in (0,1)$, we denote by $\mathcal{D}^{s, 2}(\R^{N})$ the completion of the set $C^{\infty}_{0}(\R^{N})$ consisting of the infinitely differentiable functions $u: \R^{N}\rightarrow \R$ with compact support, with respect to the following norm
$$
[u]^{2}=\iint_{\R^{2N}} \frac{|u(x)-u(y)|^{2}}{|x-y|^{N+2s}} \, dx \, dy =\|(-\Delta)^{\frac{s}{2}} u\|^{2}_{L^{2}(\R^{N})},
$$
where the second identity holds up to a positive constant. Equivalently,
$$
\mathcal{D}^{s, 2}(\R^{N})=\left\{u\in L^{2^{*}_{s}}(\R^{N}): [u]<\infty\right\}.
$$
Let us also define the fractional Sobolev space
$$
H^{s}(\R^{N})= \left\{u\in L^{2}(\R^{N}) : \frac{|u(x)-u(y)|}{|x-y|^{\frac{N+2s}{2}}} \in L^{2}(\R^{2N}) \right \}
$$
endowed with the natural norm 
$$
\|u\|_{H^{s}(\R^{N})}:= \sqrt{[u]^{2} + \|u\|_{L^{2}(\R^{N})}^{2}}.
$$

\noindent
For the convenience of the reader we recall the following fundamental embeddings:
\begin{thm}\cite{DPV}\label{Sembedding}
Let $s\in (0,1)$ and $N>2s$. Then there exists a sharp constant $S_{*}=S(N, s)>0$
such that for any $u\in \mathcal{D}^{s, 2}(\R^{N})$
\begin{equation}\label{FSI}
\|u\|^{2}_{L^{2^{*}_{s}}(\R^{N})} \leq S^{-1}_{*} [u]^{2}. 
\end{equation}
Moreover $H^{s}(\R^{N})$ is continuously embedded in $L^{q}(\R^{N})$ for any $q\in [2, 2^{*}_{s}]$ and compactly in $L^{q}_{loc}(\R^{N})$ for any $q\in [2, 2^{*}_{s})$. 
\end{thm}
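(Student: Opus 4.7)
The plan is to prove the three assertions in sequence: the sharp Sobolev inequality (\ref{FSI}), the continuous embedding into intermediate $L^q$ spaces, and the local compactness. Throughout, the natural variable is the Fourier transform, which diagonalises the Gagliardo seminorm via the identity $[u]^2 = C(N,s)^{-1}\|(-\Delta)^{s/2} u\|_{L^2(\R^N)}^2 = C(N,s)^{-1}\int_{\R^N} |\xi|^{2s}|\hat{u}(\xi)|^2\,d\xi$, which would be recorded at the outset. I would then prove (\ref{FSI}) by setting $g = (-\Delta)^{s/2} u$ and recovering $u$ as a Riesz potential, $u = c_{N,s}\, I_s g = c_{N,s}\int_{\R^N} |x-y|^{-(N-2s)} g(y)\,dy$. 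The Hardy--Littlewood--Sobolev inequality then gives $\|u\|_{L^{2^{*}_{s}}(\R^N)} \leq C\|g\|_{L^2(\R^N)}$, which, after tracking constants, is exactly (\ref{FSI}) with a specific $S_*$. The sharpness (and the identification of $S_*$ with Lieb's extremal constant) follows from the existence of extremisers, which is a consequence of symmetric rearrangement combined with the Polya--Szegő principle for the fractional seminorm.

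For the continuous embedding $H^s(\R^N) \hookrightarrow L^q(\R^N)$ with $q\in [2, 2^{*}_{s}]$, the endpoints are immediate: $q=2$ is by definition of the norm, and $q=2^{*}_{s}$ is (\ref{FSI}). For $q$ strictly between, Hölder's inequality with the exponent $\theta\in (0,1)$ determined by $\frac{1}{q} = \frac{1-\theta}{2} + \frac{\theta}{2^{*}_{s}}$ yields the interpolation bound $\|u\|_{L^q(\R^N)} \leq \|u\|_{L^2(\R^N)}^{1-\theta}\|u\|_{L^{2^{*}_{s}}(\R^N)}^{\theta}$; combining this with (\ref{FSI}) produces a constant depending only on $N, s, q$.

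The compactness assertion is the subtle part, and I would prove it via the Riesz--Fréchet--Kolmogorov criterion. Let $\mathcal{F} \subset H^s(\R^N)$ be bounded and $\Omega \subset \R^N$ bounded; by the continuous embedding $\mathcal{F}$ is bounded in $L^q(\Omega)$ for every admissible $q$. It remains to verify equicontinuity of translations, $\sup_{u\in\mathcal{F}}\|u(\cdot+h)-u\|_{L^q(\Omega)} \to 0$ as $|h|\to 0$. Because $\mathcal{F}$ is bounded in $L^{2^{*}_{s}}$, Hölder interpolation reduces the task to the case $q=2$. Plancherel gives $\|u(\cdot+h)-u\|_{L^2(\R^N)}^2 = \int_{\R^N} |e^{i h \cdot \xi}-1|^2 |\hat{u}(\xi)|^2\,d\xi$; splitting at $|\xi|=R$, the low-frequency part is bounded by $|h|^2 R^2\|u\|_{L^2}^2$ and the high-frequency part by $4 R^{-2s}[u]^2$, so optimising in $R$ (namely $R \sim |h|^{-1/(1+s)}$) yields a uniform modulus of order $|h|^{s/(1+s)}$. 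The main technical obstacle is exactly this equicontinuity-of-translations step: the Gagliardo seminorm does not encode pointwise smoothness, and it is the Fourier-side split that converts the nonlocal seminorm into a uniform translation bound. The decay of the integrand at infinity—which would be needed to upgrade to global compactness—is precisely what fails, and this is why the statement is restricted to $L^q_{loc}$.
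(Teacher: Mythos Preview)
The paper does not prove this theorem; it is quoted from \cite{DPV} and stated as background with no argument given. Your proposal supplies a correct and standard proof: the fractional Sobolev inequality via the Riesz-potential representation $u = c_{N,s} I_s\bigl((-\Delta)^{s/2}u\bigr)$ together with Hardy--Littlewood--Sobolev (with Lieb's extremisers furnishing sharpness), H\"older interpolation for the intermediate $L^q$ embeddings, and the Riesz--Fr\'echet--Kolmogorov criterion for local compactness, where the equicontinuity of translations in $L^2$ is obtained by the Fourier-side low/high-frequency split you describe. This is essentially the line of argument in \cite{DPV} as well.

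One minor remark on conventions: in this paper the Gagliardo seminorm is \emph{identified} with $\|(-\Delta)^{s/2}u\|_{L^2(\R^N)}^2$ (see the display defining $[u]^2$ in Section~2.1), so the factor $C(N,s)^{-1}$ in your opening identity should be dropped to match the paper's normalisation; this affects only the value of $S_*$, not the argument.
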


\noindent
Now, we prove the following technical result which will be useful in the sequel.
\begin{lem}\label{funlemma}
Let $(w_{j})\subset H^{s}(\R^{N})$ be a bounded sequence in $H^{s}(\R^{N})$, and let $\eta\in C^{\infty}(\R^{N})$ be a function such that $0\leq \eta\leq 1$, $\eta=0$ in $B_{1}$, $\eta=1$ in $\R^{N}\setminus B_{2}$. Set $\eta_{R}(x)=\eta(\frac{x}{R})$.
Then we get
$$
\lim_{R\rightarrow \infty}\limsup_{j\rightarrow \infty} \iint_{\R^{2N}} |w_{j}(x)|^{2} \frac{|\eta_{R}(x)-\eta_{R}(y)|^{2}}{|x-y|^{N+2s}} \, dx dy=0.
$$
\end{lem}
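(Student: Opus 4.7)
\medskip

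The plan is to bound the inner integral $\int_{\R^N}\frac{|\eta_R(x)-\eta_R(y)|^2}{|x-y|^{N+2s}}\,dy$ uniformly in $x$ by a quantity that vanishes as $R\to\infty$, and then use only the $L^2$-boundedness of $(w_j)$ to conclude. This reduces the claim to a purely deterministic estimate on the cut-off $\eta_R$, independent of $j$.

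First I would record the two elementary bounds on $\eta_R$. Since $\eta_R(x)=\eta(x/R)$ with $\eta\in C^\infty$, one has the pointwise estimate $|\eta_R(x)-\eta_R(y)|\leq \frac{\|\nabla\eta\|_\infty}{R}|x-y|$, together with the uniform bound $|\eta_R(x)-\eta_R(y)|\leq 2$. The idea is to split $\R^N_y=\{|x-y|\leq R\}\cup\{|x-y|>R\}$ and apply the Lipschitz estimate on the near region and the $L^\infty$ estimate on the far region. Concretely, for every fixed $x\in\R^N$,
\begin{align*}
\int_{|x-y|\leq R}\frac{|\eta_R(x)-\eta_R(y)|^2}{|x-y|^{N+2s}}\,dy
&\leq \frac{\|\nabla\eta\|_\infty^2}{R^2}\int_{|x-y|\leq R}|x-y|^{2-N-2s}\,dy
\leq \frac{C_1}{R^{2s}},\\
\int_{|x-y|>R}\frac{|\eta_R(x)-\eta_R(y)|^2}{|x-y|^{N+2s}}\,dy
&\leq 4\int_{|x-y|>R}|x-y|^{-N-2s}\,dy
\leq \frac{C_2}{R^{2s}},
\end{align*}
where $C_1, C_2>0$ depend only on $N$, $s$ and $\eta$. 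Adding the two contributions gives
$$
\sup_{x\in\R^N}\int_{\R^N}\frac{|\eta_R(x)-\eta_R(y)|^2}{|x-y|^{N+2s}}\,dy\leq \frac{C}{R^{2s}}.
$$

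Next, using the boundedness of $(w_j)$ in $H^s(\R^N)$ (hence in $L^2(\R^N)$), set $M:=\sup_{j}\|w_j\|_{L^2(\R^N)}^2<\infty$. Fubini's theorem then yields
$$
\iint_{\R^{2N}}|w_j(x)|^2\frac{|\eta_R(x)-\eta_R(y)|^2}{|x-y|^{N+2s}}\,dx\,dy\leq \frac{C}{R^{2s}}\|w_j\|_{L^2(\R^N)}^2\leq \frac{CM}{R^{2s}}.
$$
Taking first $\limsup_{j\to\infty}$ and then $R\to\infty$, the right-hand side tends to $0$, which is exactly the claimed identity.

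There is really no serious obstacle here; the only subtle point is the correct splitting radius. Using the natural threshold $|x-y|=R$ (matching the scale of $\eta_R$) is what makes the two contributions combine into the same power $R^{-2s}$; any coarser splitting would leave a term that does not vanish. The estimate is essentially sharp in scaling, and the proof uses only boundedness in $L^2$ rather than in $H^s$, which makes it robust for the applications in the subsequent sections.
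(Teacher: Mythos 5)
Your proof is correct and it is a genuinely simpler route than the one taken in the paper. The heart of your argument is the observation that $\eta_R$ is \emph{globally} Lipschitz with constant $\|\nabla\eta\|_\infty/R$ and bounded by $1$, so that splitting the $y$-integral at the matching scale $|x-y|=R$ gives the clean, $x$-uniform kernel estimate $\sup_{x}\int_{\R^N}\frac{|\eta_R(x)-\eta_R(y)|^2}{|x-y|^{N+2s}}\,dy\leq C R^{-2s}$; Tonelli and $L^2$-boundedness of $(w_j)$ then give the uniform-in-$j$ bound $C M R^{-2s}$, which is even a bit stronger than the $\limsup_j$ statement of the lemma. The paper instead partitions $\R^{2N}$ into three regions according to whether $x$ lies outside $B_{2R}$, in the annulus, or near the origin, introduces two auxiliary parameters $k$ and $\e$, passes to a subsequence $w_j\to w$ in $L^2_{loc}$, invokes H\"older with exponent $2^*_s/2$ to control the localized $L^2$ masses by tails of $\|w\|_{L^{2^*_s}}$, and closes with a cascading limit ($j\to\infty$, then $R\to\infty$, then $k\to\infty$ with $\e=1/k$). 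Your argument needs none of that machinery: no compactness, no extra parameters, and no H\"older. What the paper's approach tracks carefully — where $x$ sits relative to the annulus on which $\eta_R$ is nonconstant — becomes irrelevant once one notices that the Lipschitz bound for $\eta_R$ holds on all of $\R^N$, not just on the annulus.
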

\begin{proof}
Let us note that $\R^{2N}$ can be written as 
\begin{align*}
\R^{2N}&=((\R^{N}\setminus B_{2R})\times (\R^{N}\setminus B_{2R})) \cup ((\R^{N}\setminus B_{2R})\times B_{2R})\cup (B_{2R}\times \R^{N}) \\
&=: X^{1}_{R}\cup X^{2}_{R} \cup X^{3}_{R}.
\end{align*}
Then
\begin{align}\label{Pa1}
&\iint_{\R^{2N}}\frac{|\eta_{R}(x)-\eta_{R}(y)|^{2}}{|x-y|^{N+2s}} |w_{j}(x)|^{2} dx dy =\iint_{X^{1}_{R}}\frac{|\eta_{R}(x)-\eta_{R}(y)|^{2}}{|x-y|^{N+2s}} |w_{j}(x)|^{2} dx dy \nonumber \\
&+\iint_{X^{2}_{R}}\frac{|\eta_{R}(x)-\eta_{R}(y)|^{2}}{|x-y|^{N+2s}} |w_{j}(x)|^{2} dx dy+
\iint_{X^{3}_{R}}\frac{|\eta_{R}(x)-\eta_{R}(y)|^{2}}{|x-y|^{N+2s}} |w_{j}(x)|^{2} dx dy.
\end{align}
Now, we estimate each integral in (\ref{Pa1}).
Since $\eta_{R}=1$ in $\R^{N}\setminus B_{2R}$, we have
\begin{align}\label{Pa2}
\iint_{X^{1}_{R}}\frac{|w_{j}(x)|^{2} |\eta_{R}(x)-\eta_{R}(y)|^{2}}{|x-y|^{N+2s}} dx dy=0.
\end{align}
Let $k>4$. Clearly, we have
\begin{equation*}
X^{2}_{R}=(\R^{N} \setminus B_{2R})\times B_{2R} \subset ((\R^{N}\setminus B_{kR})\times B_{2R})\cup ((B_{kR}\setminus B_{2R})\times B_{2R}) 
\end{equation*}
Let us observe that, if $(x, y) \in (\R^{N}\setminus B_{kR})\times B_{2R}$, then
\begin{equation*}
|x-y|\geq |x|-|y|\geq |x|-2R>\frac{|x|}{2}. 
\end{equation*}
Therefore, taking into account that $0\leq \eta_{R}\leq 1$, $|\nabla \eta_{R}|\leq \frac{C}{R}$ and applying H\"older's inequality, we can see
\begin{align}\label{Pa3}
&\iint_{X^{2}_{R}}\frac{|w_{j}(x)|^{2} |\eta_{R}(x)-\eta_{R}(y)|^{2}}{|x-y|^{N+2s}} dx dy \nonumber \\
&=\int_{\R^{N}\setminus B_{kR}} \int_{B_{2R}} \frac{|w_{j}(x)|^{2} |\eta_{R}(x)-\eta_{R}(y)|^{2}}{|x-y|^{N+2s}} dx dy \nonumber \\
&+ \int_{B_{kR}\setminus B_{2R}} \int_{B_{2R}} \frac{|w_{j}(x)|^{2} |\eta_{R}(x)-\eta_{R}(y)|^{2}}{|x-y|^{N+2s}} dx dy \nonumber \\
&\leq 2^{2+N+2s} \int_{\R^{N}\setminus B_{kR}} \int_{B_{2R}} \frac{|w_{j}(x)|^{2}}{|x|^{N+2s}}\, dxdy\nonumber \\
&+ \frac{C}{R^{2}} \int_{B_{kR}\setminus B_{2R}} \int_{B_{2R}} \frac{|w_{j}(x)|^{2}}{|x-y|^{N+2(s-1)}}\, dxdy \nonumber \\
&\leq CR^{N} \int_{\R^{N}\setminus B_{kR}} \frac{|w_{j}(x)|^{2}}{|x|^{N+2s}}\, dx + \frac{C}{R^{2}} (kR)^{2(1-s)} \int_{B_{kR}\setminus B_{2R}} |w_{j}(x)|^{2} dx \nonumber \\
&\leq CR^{N} \left( \int_{\R^{N}\setminus B_{kR}} |w_{j}(x)|^{2^{*}_{s}} dx \right)^{\frac{2}{2^{*}_{s}}} \left(\int_{\R^{N}\setminus B_{kR}}\frac{1}{|x|^{\frac{N^{2}}{2s} +N}}\, dx \right)^{\frac{2s}{N}} \nonumber \\
&+ \frac{C k^{2(1-s)}}{R^{2s}} \int_{B_{kR}\setminus B_{2R}} |w_{j}(x)|^{2} dx \nonumber \\
&\leq \frac{C}{k^{N}} \left( \int_{\R^{N}\setminus B_{kR}} |w_{j}(x)|^{2^{*}_{s}} dx \right)^{\frac{2}{2^{*}_{s}}} + \frac{C k^{2(1-s)}}{R^{2s}} \int_{B_{kR}\setminus B_{2R}} |w_{j}(x)|^{2} dx \nonumber \\
&\leq \frac{C}{k^{N}}+ \frac{C k^{2(1-s)}}{R^{2s}} \int_{B_{kR}\setminus B_{2R}} |w_{j}(x)|^{2} dx.
\end{align}

\noindent
Fix $\e\in (0,1)$. Notice that
\begin{align}\label{Ter1}
\iint_{X^{3}_{R}} &\frac{|w_{j}(x)|^{2} |\eta_{R}(x)- \eta_{R}(y)|^{2}}{|x-y|^{N+2s}}\, dxdy \nonumber\\
&\leq \int_{B_{2R}\setminus B_{\varepsilon R}} \int_{\R^{N}} \frac{|w_{j}(x)|^{2} |\eta_{R}(x)- \eta_{R}(y)|^{2}}{|x-y|^{N+2s}}\, dxdy 
\nonumber \\
&\quad + \int_{B_{\varepsilon R}} \int_{\R^{N}} \frac{|w_{j}(x)|^{2} |\eta_{R}(x)- \eta_{R}(y)|^{2}}{|x-y|^{N+2s}}\, dxdy. 
\end{align}
Since
\begin{align*}
\int_{B_{2R}\setminus B_{\varepsilon R}} \int_{\R^{N} \cap \{y: |x-y|<R\}} &\frac{|w_{j}(x)|^{2} |\eta_{R}(x)- \eta_{R}(y)|^{2}}{|x-y|^{N+2s}}\, dxdy \\
&\leq \frac{C}{R^{2s}} \int_{B_{2R}\setminus B_{\varepsilon R}} |w_{j}(x)|^{2} dx,
\end{align*}
and 
\begin{align*}
\int_{B_{2R}\setminus B_{\varepsilon R}} \int_{\R^{N} \cap \{y: |x-y|\geq R\}} &\frac{|w_{j}(x)|^{2} |\eta_{R}(x)- \eta_{R}(y)|^{2}}{|x-y|^{N+2s}}\, dxdy \\
&\leq \frac{C}{R^{2s}} \int_{B_{2R}\setminus B_{\varepsilon R}} |w_{j}(x)|^{2} dx,
\end{align*}
we can infer that
\begin{align}\label{Ter2}
\int_{B_{2R}\setminus B_{\varepsilon R}} \int_{\R^{N}} \frac{|w_{j}(x)|^{2} |\eta_{R}(x)- \eta_{R}(y)|^{2}}{|x-y|^{N+2s}}\, dxdy \leq \frac{C}{R^{2s}} \int_{B_{2R}\setminus B_{\varepsilon R}} |w_{j}(x)|^{2} dx. 
\end{align}
Now, using the definition of $\eta_{R}$, $\e\in (0,1)$, and $0\leq \eta_{R}\leq 1$, we get 
\begin{align}\label{Ter3}
\int_{B_{\varepsilon R}} \int_{\R^{N}} &\frac{|w_{j}(x)|^{2} |\eta_{R}(x)- \eta_{R}(y)|^{2}}{|x-y|^{N+2s}} dxdy \nonumber \\
&= \int_{B_{\varepsilon R}} \int_{\R^{N}\setminus B_{R}} \frac{|w_{j}(x)|^{2} |\eta_{R}(x)- \eta_{R}(y)|^{2}}{|x-y|^{N+2s}}\, dxdy\nonumber \\
&\leq 4 \int_{B_{\varepsilon R}} \int_{\R^{N}\setminus B_{R}} \frac{|w_{j}(x)|^{2}}{|x-y|^{N+2s}} dxdy\nonumber \\
&\leq C \int_{B_{\varepsilon R}} |w_{j}(x)|^{2} dx \int_{(1-\e)R}^{\infty} \frac{1}{r^{1+2s}} dr\nonumber \\
&=\frac{C}{[(1-\e)R]^{2s}} \int_{B_{\varepsilon R}} |w_{j}(x)|^{2} dx,
\end{align}
where we used the fact that $|x-y|>(1-\e)R$ when $(x, y) \in B_{\varepsilon R}\times (\R^{N} \setminus B_{R})$.
Taking into account \eqref{Ter1}, \eqref{Ter2} and \eqref{Ter3} we deduce that
\begin{align}\label{Pa4}
\iint_{X^{3}_{R}} &\frac{|w_{j}(x)|^{2} |\eta_{R}(x)- \eta_{R}(y)|^{2}}{|x-y|^{N+2s}}\, dxdy \nonumber \\
&\leq \frac{C}{R^{2s}} \int_{B_{2R}\setminus B_{\varepsilon R}} |w_{j}(x)|^{2} dx + \frac{C}{[(1-\e)R]^{2s}} \int_{B_{\varepsilon R}} |w_{j}(x)|^{2} dx. 
\end{align}
Putting together \eqref{Pa1}, \eqref{Pa2}, \eqref{Pa3} and \eqref{Pa4} we obtain
\begin{align}\label{Pa5}
\iint_{\R^{2N}} &\frac{|w_{j}(x)|^{2} |\eta_{R}(x)- \eta_{R}(y)|^{2}}{|x-y|^{N+2s}}\, dxdy \nonumber \\
&\leq \frac{C}{k^{N}} + \frac{Ck^{2(1-s)}}{R^{2s}} \int_{B_{kR}\setminus B_{2R}} |w_{j}(x)|^{2} dx + \frac{C}{R^{2s}} \int_{B_{2R}\setminus B_{\varepsilon R}} |w_{j}(x)|^{2} dx \nonumber \\
&+ \frac{C}{[(1-\e)R]^{2s}}\int_{B_{\varepsilon R}} |w_{j}(x)|^{2} dx. 
\end{align}
Since $(w_{j})$ is bounded in $H^{s}(\R^{N})$, by Theorem \ref{Sembedding}, 
we may assume that $w_{j}\rightarrow w$ in $L^{2}_{loc}(\R^{N})$ for some $w\in H^{s}(\R^{N})$. 
Then, taking the limit as $j\rightarrow \infty$ in \eqref{Pa5} and applying H\"older's inequality we have
\begin{align}\label{XZZ}
&\limsup_{j\rightarrow \infty} \iint_{\R^{2N}} \frac{|w_{j}(x)|^{2} |\eta_{R}(x)- \eta_{R}(y)|^{2}}{|x-y|^{N+2s}}\, dxdy \nonumber\\
&\leq \frac{C}{k^{N}} + \frac{Ck^{2(1-s)}}{R^{2s}} \int_{B_{kR}\setminus B_{2R}} |w(x)|^{2} dx + \frac{C}{R^{2s}} \int_{B_{2R}\setminus B_{\varepsilon R}} |w(x)|^{2} dx \nonumber \\
&+ \frac{C}{[(1-\e)R]^{2s}}\int_{B_{\varepsilon R}} |w(x)|^{2} dx \nonumber\\
&\leq \frac{C}{k^{N}} + Ck^{2} \left( \int_{B_{kR}\setminus B_{2R}} |w(x)|^{2^{*}_{s}} dx\right)^{\frac{2}{2^{*}_{s}}} + C\left(\int_{B_{2R}\setminus B_{\varepsilon R}} |w(x)|^{2^{*}_{s}} dx\right)^{\frac{2}{2^{*}_{s}}} \nonumber \\
&+ C\left( \frac{\e}{1-\e}\right)^{2s} \left(\int_{B_{\varepsilon R}} |w(x)|^{2^{*}_{s}} dx\right)^{\frac{2}{2^{*}_{s}}}. 
\end{align}
By $w\in L^{2^{*}_{s}}(\R^{N})$, $k>4$ and $\e \in (0,1)$ we can note that
\begin{align*}
\lim_{R\rightarrow \infty} \int_{B_{kR}\setminus B_{2R}} |w(x)|^{2^{*}_{s}} dx = \lim_{R\rightarrow \infty} \int_{B_{2R}\setminus B_{\varepsilon R}} |w(x)|^{2^{*}_{s}} dx = 0. 
\end{align*}
Choosing $\e= \frac{1}{k}$ in \eqref{XZZ} we get
\begin{align*}
&\limsup_{R\rightarrow \infty} \limsup_{j\rightarrow \infty} \iint_{\R^{2N}} \frac{|w_{j}(x)|^{2} |\eta_{R}(x)- \eta_{R}(y)|^{2}}{|x-y|^{N+2s}}\, dxdy\\
&\quad \leq \lim_{k\rightarrow \infty}  \left[\, \frac{C}{k^{N}}
+ C\left(\frac{1}{k-1}\right)^{2s} \left(\int_{B_{\frac{1}{k}R}} |w(x)|^{2^{*}_{s}} dx\right)^{\frac{2}{2^{*}_{s}}}\, \right]=0.
\end{align*}
\end{proof}

\noindent
Let us introduce the space of radial functions in $H^{s}(\R^{N})$
$$
H^{s}_{r}(\R^{N})=\left \{u\in H^{s}(\R^{N}): u(x)=u(|x|)\right\}. 
$$
Related to this space, we have the following fundamental compactness result due to Lions \cite{Lions}:
\begin{thm}\cite{Lions}\label{Lions}
Let $s\in (0,1)$ and $N\geq 2$. Then $H^{s}_{r}(\R^{N})$ is compactly embedded in $L^{q}(\R^{N})$ for any $q\in (2, 2^{*}_{s})$.
\end{thm}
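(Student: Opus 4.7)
The plan is to combine weak compactness in $H^{s}(\R^{N})$, the local compactness already recorded in Theorem \ref{Sembedding}, and a Lions-type vanishing dichotomy, in which radial symmetry is the key tool for ruling out mass escaping to infinity.

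Let $(u_{n})\subset H^{s}_{r}(\R^{N})$ be bounded and fix $q\in(2,2^{*}_{s})$. By reflexivity of $H^{s}(\R^{N})$, up to a subsequence $u_{n}\rightharpoonup u$ weakly in $H^{s}(\R^{N})$; since radiality is preserved under weak limits (composition with any rotation is weakly continuous), $u\in H^{s}_{r}(\R^{N})$. Setting $v_{n}=u_{n}-u$ gives $v_{n}\rightharpoonup 0$ in $H^{s}_{r}(\R^{N})$ and, by Theorem \ref{Sembedding}, $v_{n}\to 0$ in $L^{q}_{loc}(\R^{N})$; it suffices to prove $v_{n}\to 0$ in $L^{q}(\R^{N})$. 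The first tool I would invoke is the fractional analogue of Lions's vanishing lemma: any bounded $(w_{n})\subset H^{s}(\R^{N})$ with
$$
\lim_{n\to\infty}\sup_{y\in\R^{N}}\int_{B_{R}(y)}|w_{n}|^{2}\,dx=0 \quad\text{for some } R>0
$$
satisfies $w_{n}\to 0$ in $L^{t}(\R^{N})$ for every $t\in(2,2^{*}_{s})$; this follows from a H\"older interpolation on each ball of a locally finite cover combined with the fractional Sobolev embedding \eqref{FSI}.

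To verify the vanishing hypothesis for $(v_{n})$, I argue by contradiction. If it fails, then along a subsequence there exist $R,\delta>0$ and centers $y_{n}\in\R^{N}$ with $\int_{B_{R}(y_{n})}|v_{n}|^{2}\,dx\geq\delta$. The sequence $(y_{n})$ cannot be bounded: otherwise all these balls lie in a fixed compact set and $v_{n}\to 0$ in $L^{2}_{loc}(\R^{N})$ would force $\int_{B_{R}(y_{n})}|v_{n}|^{2}\,dx\to 0$, a contradiction. Hence, along a further subsequence, $|y_{n}|\to\infty$. Radial symmetry now enters decisively: since $v_{n}(x)=v_{n}(|x|)$, for every rotation $\rho\in O(N)$ one has $\int_{B_{R}(\rho y_{n})}|v_{n}|^{2}\,dx\geq\delta$. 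Because $N\geq 2$, the sphere of radius $|y_{n}|$ has $(N-1)$-dimensional area comparable to $|y_{n}|^{N-1}$, so one may select $k_{n}\geq c\,|y_{n}|^{N-1}/R^{N-1}$ rotated copies $\rho_{1}y_{n},\dots,\rho_{k_{n}}y_{n}$ whose pairwise Euclidean distances exceed $2R$. The balls $B_{R}(\rho_{j}y_{n})$ are then disjoint and each carries $L^{2}$-mass at least $\delta$, whence
$$
\|v_{n}\|^{2}_{L^{2}(\R^{N})}\geq k_{n}\,\delta\geq \frac{c\,\delta}{R^{N-1}}\,|y_{n}|^{N-1}\longrightarrow\infty,
$$
contradicting $H^{s}$-boundedness of $(v_{n})$. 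Therefore the vanishing hypothesis holds and the lemma delivers $v_{n}\to 0$ in $L^{q}(\R^{N})$.

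The main technical obstacle I expect is the fractional vanishing lemma itself: unlike the local setting, the Gagliardo seminorm is \emph{not} additive over a cover, so one must combine the ball-by-ball interpolation estimate with a careful partition-of-unity control on the nonlocal seminorm (of the same flavor as Lemma \ref{funlemma}), or directly adapt Lions's argument from \cite{Lions} to the fractional framework. Once that lemma is available, the spherical packing step is elementary but geometrically crucial: it is precisely where the hypotheses $N\geq 2$ and radiality are genuinely used, and it furnishes the clean contradiction that rules out the non-vanishing alternative.
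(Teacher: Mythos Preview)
The paper does not actually prove this theorem; it is stated with a citation to Lions \cite{Lions} and used thereafter as a black box. Your argument is correct and self-contained. One small remark: the fractional vanishing lemma you flag as the ``main technical obstacle'' is precisely Lemma \ref{lionslemma}, already recorded in the paper with a citation to \cite{Secchi1}, so you may simply invoke it rather than re-derive it via partition-of-unity estimates.

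For comparison, the classical route of Lions (following Strauss) proceeds through a pointwise radial decay estimate of the form $|u(x)|\leq C\,|x|^{-\alpha}\|u\|_{H^{s}}$ for radial $u$ and $|x|$ large, which gives uniform smallness of $\|u_{n}\|_{L^{q}(\R^{N}\setminus B_{R})}$ directly and hence tightness. Your approach instead combines the Lions vanishing lemma with a spherical packing argument exploiting $O(N)$-invariance to rule out mass escaping to infinity. Both are standard; yours has the advantage of avoiding any pointwise estimate and relying only on integral quantities, at the cost of invoking the vanishing lemma.
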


\noindent
Finally, we recall the following two useful lemmas:
\begin{lem}\cite{Secchi1}\label{lionslemma}
Let $N>2s$ and $r\in [2, 2^{*}_{s})$. If $(u_{j})$ is a bounded sequence in $H^{s}(\R^{N})$ and if
$$
\lim_{j \rightarrow \infty} \sup_{y\in \R^{N}} \int_{B_{R}(y)} |u_{n}|^{r} dx=0
$$
for some $R>0$,
then $u_{j}\rightarrow 0$ in $L^{t}(\R^{N})$ for all $t\in (2, 2^{*}_{s})$.
\end{lem}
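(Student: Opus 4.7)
Plan. The natural strategy is to first establish convergence in a single intermediate Lebesgue space $L^{t_{0}}$ and then extend to every $t \in (2, 2^{*}_{s})$ by H\"older interpolation on $\R^{N}$. As a preliminary reduction, for $r \in [2, 2^{*}_{s})$ H\"older's inequality on a fixed ball yields
$$
\int_{B_{R}(y)} |u_{j}|^{2} \leq |B_{R}|^{(r-2)/r} \Bigl(\int_{B_{R}(y)} |u_{j}|^{r} \Bigr)^{2/r},
$$
so the hypothesis with exponent $r$ entails the same hypothesis with exponent $2$, and I may assume $r = 2$. The crucial choice of intermediate exponent is $t_{0} := 2 + \tfrac{4s}{N}$, which lies in $(2, 2^{*}_{s})$: the H\"older interpolation between $L^{2}$ and $L^{2^{*}_{s}}$ with parameter $\theta$ determined by $\tfrac{1}{t_{0}} = \tfrac{1-\theta}{2} + \tfrac{\theta}{2^{*}_{s}}$ yields the clean values $(1-\theta) t_{0} = \tfrac{4s}{N}$ and $\theta t_{0} = 2$, whence, for every $y\in\R^{N}$,
$$
\int_{B_{R}(y)} |u|^{t_{0}} \leq \|u\|_{L^{2}(B_{R}(y))}^{4s/N} \|u\|_{L^{2^{*}_{s}}(B_{R}(y))}^{2}.
$$

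I would then cover $\R^{N}$ by balls $\{B_{R}(y_{i})\}_{i\in\N}$ of finite overlap multiplicity $N_{0}$, sum the previous inequality over $i$, and pull the $L^{2}$-factor out via a supremum:
$$
\int_{\R^{N}} |u|^{t_{0}} \leq \Bigl[\sup_{y \in \R^{N}} \int_{B_{R}(y)} |u|^{2}\Bigr]^{2s/N} \sum_{i} \|u\|_{L^{2^{*}_{s}}(B_{R}(y_{i}))}^{2}.
$$
To control the remaining sum I would invoke a translation-invariant local fractional Sobolev embedding $\|u\|_{L^{2^{*}_{s}}(B_{R}(y))} \leq C \|u\|_{H^{s}(B_{R}(y))}$ with $C$ independent of $y$ (available via a fixed cutoff function and the global embedding of Theorem~\ref{Sembedding}). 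Finite overlap gives $\sum_{i} \|u\|_{L^{2}(B_{R}(y_{i}))}^{2} \leq N_{0} \|u\|_{L^{2}(\R^{N})}^{2}$, while the pointwise bound $\sum_{i}\chi_{B_{R}(y_{i})}(x)\chi_{B_{R}(y_{i})}(z) \leq N_{0}$ yields $\sum_{i} [u]^{2}_{s, B_{R}(y_{i})} \leq N_{0} [u]^{2}_{s}$, so that
$$
\sum_{i} \|u\|_{L^{2^{*}_{s}}(B_{R}(y_{i}))}^{2} \leq C \|u\|_{H^{s}(\R^{N})}^{2}.
$$

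Applied with $u = u_{j}$, the first factor on the right tends to zero by the reduced hypothesis, while the second is bounded since $(u_{j})$ is bounded in $H^{s}(\R^{N})$; hence $u_{j} \to 0$ in $L^{t_{0}}(\R^{N})$. For a general $t \in (2, 2^{*}_{s})$, H\"older interpolation on $\R^{N}$ -- between $L^{2}$ and $L^{t_{0}}$ when $t \leq t_{0}$, and between $L^{t_{0}}$ and $L^{2^{*}_{s}}$ when $t \geq t_{0}$ -- together with Theorem~\ref{Sembedding}, yields $u_{j} \to 0$ in $L^{t}(\R^{N})$. I expect the main obstacle to be the translation-invariant local-to-global estimate $\sum_{i} \|u\|_{L^{2^{*}_{s}}(B_{R}(y_{i}))}^{2} \leq C \|u\|_{H^{s}(\R^{N})}^{2}$: because the Gagliardo seminorm is nonlocal, one must verify that the constant in the local Sobolev inequality does not depend on the center of the ball and that the infinite summation over overlapping balls does not cause the constant to deteriorate.
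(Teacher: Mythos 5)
The paper does not prove Lemma \ref{lionslemma}; it cites it from \cite{Secchi1}, which in turn gives the fractional analogue of Lions' classical vanishing lemma (see e.g. Lemma 1.21 in Willem's ``Minimax Theorems''). There is therefore no in-paper proof to compare against, so I compare your argument to the standard one.

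Your proof is correct and is essentially identical in structure to the standard proof of this result in the fractional setting. The reduction to $r=2$ by H\"older on a fixed ball, the choice $t_0 = 2 + 4s/N$ (the fractional analogue of Lions' classical exponent $2(1+2/N)$), the ball interpolation $\int_{B_R(y)}|u|^{t_0}\le \|u\|_{L^2(B_R(y))}^{4s/N}\|u\|_{L^{2^*_s}(B_R(y))}^2$, the finite-multiplicity covering, and the final interpolation between $L^2$, $L^{t_0}$ and $L^{2^*_s}$ are exactly the steps one finds in the cited reference. Your arithmetic checks out: with $\theta = N/(N+2s)$ one indeed has $\theta t_0 = 2$ and $(1-\theta)t_0 = 4s/N$, and $1/t_0 = (1-\theta)/2 + \theta/2^*_s$. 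The one technical point you flag — that the local critical Sobolev constant is uniform over $y$ — is handled correctly in principle: for balls of a fixed radius $R$ the extension operator (or, equivalently, a fixed cutoff translated to $B_R(y)$) is translation-covariant, so the constant in $\|u\|_{L^{2^*_s}(B_R(y))} \le C\|u\|_{H^s(B_R(y))}$ depends only on $N$, $s$, $R$. If you take the cutoff route rather than the extension-operator route, the resulting estimate will naturally involve the restriction norm on a slightly larger ball $B_{2R}(y)$; this changes nothing once the cover is taken with radius $2R$, since the pointwise bound $\sum_i \chi_{B_{2R}(y_i)}(x)\chi_{B_{2R}(y_i)}(z)\le N_0$ still controls the Gagliardo seminorm sum by $N_0[u]^2$. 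No genuine gap.
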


\begin{lem}\cite{CW}\label{Strauss}
Let $(X, \|\cdot\|_{X})$ be a Banach space such that $X$ is continuously and compactly embedded into $L^{q}(\R^{N})$ for $q\in [q_{1}, q_{2}]$ and $q\in (q_{1}, q_{2})$, respectively, where $q_{1}, q_{2}\in (0, \infty)$.
Assume that $(u_{j})\subset X$, $u: \R^{N} \rightarrow \R$ is a measurable function and $P\in C(\R, \R)$ is such that
\begin{compactenum}[(i)]
\item $\displaystyle{\lim_{|t|\rightarrow 0} \frac{P(t)}{|t|^{q_{1}}}=0}$, \\
\item $\displaystyle{\lim_{|t|\rightarrow \infty} \frac{P(t)}{|t|^{q_{2}}}=0}$,\\
\item $\displaystyle{\sup_{j\in \N} \|u_{j}\|_{X}<\infty}$,\\
\item $\displaystyle{\lim_{j \rightarrow \infty} P(u_{j}(x))=u(x)} \mbox{ for a.e. } x\in \R^{N}$.
\end{compactenum}
Then, up to a subsequence, we have
$$
\lim_{j\rightarrow \infty} \|P(u_{j})-u\|_{L^{1}(\R^{N})}=0.
$$
\end{lem}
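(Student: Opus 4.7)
The plan is to apply Vitali's convergence theorem in $L^{1}(\R^{N})$, so what has to be shown is equi-integrability and tightness of the sequence $(P(u_{j}))$; the a.e.\ convergence (iv) then does the rest. The first step is a standard splitting of $P$. Given $\eta>0$, I use (i) and (ii) to pick $0<\delta<M$ with $|P(t)|\leq \eta|t|^{q_{1}}$ for $|t|\leq \delta$ and $|P(t)|\leq \eta|t|^{q_{2}}$ for $|t|\geq M$, and write $P=P_{\eta}^{(1)}+P_{\eta}^{(2)}$ where $P_{\eta}^{(1)}(t)=P(t)\chi_{\{\delta<|t|<M\}}$ and $P_{\eta}^{(2)}(t)=P(t)-P_{\eta}^{(1)}(t)$. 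Then $P_{\eta}^{(1)}$ is bounded and vanishes off a compact set in the variable $t$, while $|P_{\eta}^{(2)}(t)|\leq \eta(|t|^{q_{1}}+|t|^{q_{2}})$ for every $t\in \R$. Because $P_{\eta}^{(1)}$ vanishes on $\{|t|<\delta\}$, for any intermediate exponent $q\in(q_{1},q_{2})$ one also has the comparison $|P_{\eta}^{(1)}(t)|\leq C_{\eta,q}|t|^{q}$; this turns $P_{\eta}^{(1)}(u_{j})$ into something controlled by $|u_{j}|^{q}$, to which the compact embedding can be applied.

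Next, I fix such an intermediate $q$. By (iii) and the compact embedding $X\hookrightarrow L^{q}(\R^{N})$, a subsequence of $(u_{j})$ (still denoted the same) converges strongly in $L^{q}(\R^{N})$ and, upon passing to a further subsequence, almost everywhere; in particular $(|u_{j}|^{q})$ is equi-integrable and tight in $L^{1}(\R^{N})$. Combining this with the continuous embeddings $X\hookrightarrow L^{q_{1}}(\R^{N})$ and $X\hookrightarrow L^{q_{2}}(\R^{N})$, which yield $M_{0}:=\sup_{j}(\|u_{j}\|_{L^{q_{1}}}^{q_{1}}+\|u_{j}\|_{L^{q_{2}}}^{q_{2}})<\infty$, I obtain, for every measurable $A\subset \R^{N}$,
\begin{equation*}
\int_{A}|P(u_{j})|\,dx \;\leq\; C_{\eta,q}\int_{A}|u_{j}|^{q}\,dx \;+\; \eta\,M_{0}.
\end{equation*}
Choosing first $\eta$ small and then applying the estimate either to $A$ with $|A|$ small or to $A=\R^{N}\setminus B_{R}$ with $R$ large, the equi-integrability and tightness of $(|u_{j}|^{q})$ propagate to $(P(u_{j}))$.

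With these two properties in hand and the pointwise convergence supplied by (iv), Vitali's theorem delivers $P(u_{j})\to u$ in $L^{1}(\R^{N})$ along the extracted subsequence, which is the claim. The only delicate point of the argument is that the bound on $P_{\eta}^{(1)}(u_{j})$ has to be made uniform in $j$: this is exactly where the compact embedding at the intermediate exponent $q\in(q_{1},q_{2})$ is essential, since the boundedness in $X$ alone would yield only weak convergence in $L^{q}(\R^{N})$, which is insufficient for equi-integrability and tightness. Conditions (i) and (ii) play a complementary role, allowing the complement $P_{\eta}^{(2)}(u_{j})$ to be absorbed into an arbitrarily small remainder, uniformly in $j$.
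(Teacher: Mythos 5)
The paper does not prove this lemma; it is imported verbatim from the reference \cite{CW}, so there is no proof in the text to compare against. That said, your argument is correct and is precisely the standard Strauss-type proof: split $P$ according to $|t|<\delta$, $\delta\leq|t|\leq M$, $|t|>M$; control the two tails via (i), (ii) and the continuous $L^{q_1}$, $L^{q_2}$ embeddings (this is where the uniform bound $M_{0}$ enters); and control the middle band via the compact embedding at some intermediate exponent $q\in(q_1,q_2)$, which gives strong $L^{q}$ convergence and hence uniform integrability and tightness of $(|u_{j}|^{q})$ along a subsequence. Your packaging of the final step through Vitali's theorem is a clean way to combine these ingredients with the pointwise convergence (iv); the classical presentations (Strauss, Berestycki--Lions, Chang--Wang) typically do the same estimate by hand. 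One small clarification: the comparison $|P^{(1)}_{\eta}(t)|\leq C_{\eta,q}|t|^{q}$ is a pointwise bound in $t$ and is therefore automatically uniform in $j$; the genuinely delicate step, which you do identify, is that the resulting quantity $\int_{A}|u_{j}|^{q}$ must be made small uniformly in $j$, and that is exactly what the compact embedding and the ensuing $L^{q}$-strong convergence deliver.
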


\subsection{Modification of the nonlinearity}
Since we are looking for positive solutions of (\ref{P}), we can suppose that $f(t)= 0$ for any $t \leq 0$.\\
Arguing as in \cite{JT2}, we can prove the following useful properties of the function $f$:
\begin{lem}\label{lemma 2.1}
Assume that $(f1)$-$(f3)$ hold.  Then we have:
\begin{compactenum}[(i)]
\item For all $\delta >0$ there exists $C_{\delta}>0$ such that
      \begin{equation}\label{2.7}
      |f(t)|\leq \delta |t|+ C_{\delta} |t|^{p} \, \mbox{ for all } t\in\R.
      \end{equation}
\item If $(f4)$ holds, then $f(t)\geq 0$ for all $t\geq 0$.
\item If $(f5)$ holds, then $f(t)\geq 0$, $\hat{F}(t)\geq 0$, 
      $\frac{d}{dt}(\frac{F(t)}{t^{2}})\geq 0$  for all $t \geq 0$.
\item If $t\mapsto \frac{f(t)}{t}$ is nondecreasing for $t\in (0, \infty)$, then $(f5)$ is satisfied with $D=1$.
\end{compactenum}
\end{lem}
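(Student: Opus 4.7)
The plan is to treat each of the four items as an essentially elementary consequence of $(f1)$--$(f5)$ together with the very convenient identity
$$
\frac{d}{dt}\!\left(\frac{F(t)}{t^{2}}\right)=\frac{f(t)t-2F(t)}{t^{3}}=\frac{2\hat{F}(t)}{t^{3}},
$$
which links $\hat{F}$ to the monotonicity of $F(t)/t^{2}$, and the twin identity $\hat{F}'(t)=\frac{t^{2}}{2}\frac{d}{dt}\bigl(\frac{f(t)}{t}\bigr)$, which links $\hat{F}$ to the monotonicity of $f(t)/t$. These two identities are the backbone of the whole lemma.

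For (i), I would combine the two pointwise asymptotics. Given $\delta>0$, by $(f2)$ pick $\eta_{1}>0$ with $|f(t)|\leq \delta |t|$ for $|t|\leq \eta_{1}$; by $(f3)$ pick $\eta_{2}\geq \eta_{1}$ with $|f(t)|\leq |t|^{p}$ for $|t|\geq \eta_{2}$; on the annulus $\eta_{1}\leq |t|\leq \eta_{2}$, continuity of $f$ gives $|f(t)|\leq M$, and $|t|^{p}\geq \eta_{1}^{p}$ lets me absorb this into $C_{\delta}|t|^{p}$. For (ii), I simply read $(f4)$: for $t>0$, $f(t)t\geq \mu F(t)>0$ forces $f(t)>0$, and $f(0)=0$ comes from $(f2)$.

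Part (iii) is the one that really uses (\ref{2.4}), and the argument must be carried out in the correct order. The chain I would follow is:
\begin{compactenum}[\textbf{Step 1}]
\item $\hat{F}(0)=0$ since $f(0)=F(0)=0$. Applying (\ref{2.4}) with $t=0$ gives $0\leq D\hat{F}(\bar{t})$ for every $\bar{t}\geq 0$, so $\hat{F}(\bar{t})\geq 0$.
\item By the first displayed identity, $\frac{d}{dt}(F(t)/t^{2})=2\hat{F}(t)/t^{3}\geq 0$ on $(0,\infty)$, hence $F(t)/t^{2}$ is nondecreasing.
\item By $(f2)$, $|f(t)|\leq \varepsilon|t|$ near $0$, so $F(t)/t^{2}\to 0$ as $t\to 0^{+}$. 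Combined with Step 2, $F(t)/t^{2}\geq 0$, i.e.\ $F(t)\geq 0$ for $t\geq 0$.
\item Adding: $\tfrac{1}{2}f(t)t=F(t)+\hat{F}(t)\geq 0$ for $t>0$ by Steps 1 and 3, so $f(t)\geq 0$ for all $t\geq 0$.
\end{compactenum}

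Part (iv) is then immediate from the second identity: if $t\mapsto f(t)/t$ is nondecreasing on $(0,\infty)$, then $\hat{F}'(t)=\frac{t^{2}}{2}\frac{d}{dt}(f(t)/t)\geq 0$, so $\hat{F}$ itself is nondecreasing, which is precisely (\ref{2.4}) with $D=1$; moreover the monotonicity of $f(t)/t$ guarantees that $\lim_{t\to\infty}f(t)/t$ exists in $[0,\infty]$, yielding the first half of $(f5)$ (with the value called $a$). No part of this argument presents a real obstacle; the only point where care is needed is the logical ordering in (iii), where the sign of $\hat{F}$ must be secured \emph{before} it is used to produce the sign of $F$, and both are needed to produce the sign of $f$.
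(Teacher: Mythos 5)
Your proof is correct. The paper itself provides no proof of Lemma \ref{lemma 2.1} — it simply states ``Arguing as in \cite{JT2}, we can prove the following useful properties'' — so there is no in-paper argument to compare against, but your four-step treatment is precisely the standard argument one extracts from the Jeanjean--Tanaka paper: the decomposition of the real line for $(i)$, reading $(f4)$ directly for $(ii)$, the key identities
$\frac{d}{dt}\bigl(F(t)/t^{2}\bigr)=2\hat{F}(t)/t^{3}$ and $\hat{F}'(t)=\tfrac{t^{2}}{2}\frac{d}{dt}\bigl(f(t)/t\bigr)$
for $(iii)$ and $(iv)$, and in particular the crucial ordering in $(iii)$ of deducing first the sign of $\hat{F}$ from \eqref{2.4} with $t=0$, then the sign of $F$ via the limit $F(t)/t^{2}\to 0$, and only then the sign of $f$ from $\tfrac12 f(t)t = F(t)+\hat{F}(t)$. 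The one micro-caveat, which you flag yourself, is that in $(iv)$ the monotone limit $a=\lim_{t\to\infty}f(t)/t$ lies a priori in $[0,\infty]$ rather than $(0,\infty]$; only the degenerate case $f\equiv 0$ on $(0,\infty)$ is excluded by this, and both you and the literature implicitly set that aside.
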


\noindent
Now, let us suppose that $f(t)$ satisfies $(f1)$-$(f3)$ and that 
$$
V_{0}< a= \lim_{\xi \rightarrow \infty} \frac{f(t)}{t} \in (0, \infty].
$$
Take $\nu \in (0, \frac{V_{0}}{2})$ and we define
$$
\underline{f}(t):=
\begin{cases}
\min \{f(t), \nu t\} & \text{ if $t \geq 0$} \\
0                        & \text{ if $t <0$}.
\end{cases}
$$ 
Using $(f2)$ we can find $r_{\nu} >0$ such that 
$$
\underline{f}(t) = f(t) \quad
\mbox{ for all } |t|\leq r_{\nu}.
$$ 
Moreover it holds that
$$
\underline{f}(t):=
\begin{cases}
\nu t & \text{ for large $t \geq 0$} \\
0      & \text{ for $t \leq 0$}.
\end{cases}
$$ 
For technical reasons, it is convenient to choose $\nu$ as follows:\\
If $(f4)$ holds, then we take $\nu >0$ such that
\begin{equation}\label{2.13}
\frac{\nu}{2V_{0}} < \frac{1}{2}-\frac{1}{\mu}.
\end{equation} 
When $(f5)$ is satisfied, we choose $\nu \in (0, \frac{V_{0}}{2})$ such that $\nu$ is a regular value of $t\in (0, \infty) \mapsto \frac{f(t)}{t}$.
Since $\lim_{t\rightarrow 0} \frac{f(t)}{t}=0$ and $\lim_{t \rightarrow \infty}
\frac{f(t)}{t}= a>V_{0} >\nu$, if $\nu$ is a regular value of $\frac{f(t)}{t}$ we deduce that 
\begin{equation}\label{2.9JT}
k_{\nu}= card\{t \in(0, \infty) : f(t)= \nu t\} <\infty.
\end{equation}

\noindent
Now, let $\Lambda \subset \R^{N}$ be a bounded open set such that $\partial \Lambda\in C^{\infty}$, and we assume that $\Lambda$ satisfies (\ref{2.5}).
We take an open set $\Lambda' \subset \Lambda$ with smooth boundary $\partial \Lambda'$ and we define a function $\chi \in C^{\infty}(\R^{N}, \R)$ such that
\begin{align*}
&\inf_{\Lambda \setminus \Lambda'} V > \inf_{\Lambda} V,\\
&\min_{\partial \Lambda'} V > \inf_{\Lambda'} V = \inf_{\Lambda} V,\\
&\chi (x)= 1 \quad \mbox{ for } x\in \Lambda',\\
&\chi (x) \in (0,1) \quad  \mbox{ for } x\in \Lambda \setminus \overline{\Lambda'},\\
&\chi (x)=0 \quad  \mbox{ for } x\in \R^{N} \setminus \Lambda.
\end{align*}

\noindent
Without loss of generality, we suppose that $0\in \Lambda'$ and $V(0)= \inf_{x\in \Lambda} V(x)$. \\

\noindent
Finally, we introduce the following penalty function 
\begin{equation*}
g(x, t) = \chi(x) f(t) + (1- \chi(x))\underline{f}(t) \quad \mbox{ for } (x, t)\in \R^{N}\times \R,
\end{equation*}
and we set
\begin{align*}
\underline{F}(t)&= \int_{0}^{t} \underline{f}(\tau) d\tau, \\
G(x, t)&= \int_{0}^{t} g(x, \tau) d\tau = \chi(x)F(t)+ (1-\chi(x))\underline{F}(t).
\end{align*}

\noindent
As in \cite{JT2}, it is easy to check that the following properties concerning $\underline{f}(t)$ and $g(x, t)$ hold. 
\begin{lem}\label{lemma 2.2}
\begin{compactenum}[(i)]
\item $\underline{f}(t)=0$, $\underline{F}(t)=0$ for all $t\leq 0$.
\item $\underline{f}(t)\leq \nu t$, $\underline{F}(t)\leq F(t)$ for all $t\geq 0$.
\item  $\underline{f}(t)\leq f(t)$  for all $t\geq 0$.
\item If $f(t)$ satisfies either $(f4)$ or $(f5)$, then $\underline{f}(t)
\geq 0$ for all $t \in \R$.
\item If $f(t)$ satisfies $(f5)$, then $\underline{f}(t)$ also satisfies
$(f5)$. Moreover, $\underline{\hat{F}}(t)\geq 0$ for all $t\geq 0$.  
\end{compactenum}
\end{lem}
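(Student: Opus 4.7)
The plan is to check items $(i)$--$(v)$ in turn. Items $(i)$--$(iv)$ are direct consequences of the piecewise definition of $\underline{f}$ combined with Lemma~\ref{lemma 2.1}, while item $(v)$ is the only one that requires a genuine argument, and there the key point is that $\nu$ was chosen as a regular value of $t \mapsto f(t)/t$.

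First I would dispose of $(i)$--$(iv)$ quickly. For $(i)$, $\underline{f} \equiv 0$ on $(-\infty,0]$ by definition, so $\underline{F}(t) = \int_0^t \underline{f}(\tau)\,d\tau = 0$ for $t \leq 0$. The bounds $\underline{f}(t) \leq \nu t$ in $(ii)$ and $\underline{f}(t) \leq f(t)$ in $(iii)$ are immediate from the $\min$ in the definition, and integrating $(iii)$ on $[0,t]$ yields $\underline{F}(t) \leq F(t)$. For $(iv)$: under $(f4)$, the inequality $0 < \mu F(t) \leq f(t)t$ forces $f(t) > 0$ for $t > 0$; under $(f5)$, Lemma~\ref{lemma 2.1}(iii) gives $f(t) \geq 0$ on $[0,\infty)$. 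In either case both arguments of the $\min$ are nonnegative on $[0,\infty)$, whence $\underline{f}(t) \geq 0$ there (and $\underline{f}(t) = 0$ on $(-\infty,0]$).

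The heart of the argument is $(v)$. Property $(f5)(i)$ for $\underline{f}$ follows at once from the fact that $f(t)/t \to a > V_0 > \nu$: for all sufficiently large $t$ one has $f(t) > \nu t$, whence $\underline{f}(t) = \nu t$ and $\underline{f}(t)/t \to \nu \in (0,\infty)$. For the remaining claims, I would exploit \eqref{2.9JT}: because $\nu$ is a regular value of $t \mapsto f(t)/t$, the set $\{t>0 : f(t)=\nu t\}$ is a finite collection $0 < t_1 < \cdots < t_{k_\nu}$, and $(0,\infty)$ decomposes into finitely many intervals on each of which $\underline{f}$ coincides either with $f$ or with $\nu t$. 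On an interval of the first type, $\underline{\hat{F}}$ differs from $\hat{F}$ by an additive constant and Lemma~\ref{lemma 2.1}(iii) supplies nonnegativity; on an interval of the second type, $\underline{f}(t)/t \equiv \nu$ is constant, so $\frac{d}{dt}\underline{\hat{F}}(t) = \frac{t^2}{2}\frac{d}{dt}(\underline{f}(t)/t) = 0$ and $\underline{\hat{F}}$ is constant. Since $\underline{f}$ (hence $\underline{\hat{F}}$) is continuous and $\underline{\hat{F}}(0)=0$, walking across the finitely many pieces yields $\underline{\hat{F}} \geq 0$ on $[0,\infty)$.

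The main obstacle is the remaining inequality $\underline{\hat{F}}(t) \leq D'\,\underline{\hat{F}}(\bar{t})$ for $0 \leq t \leq \bar{t}$: the constant $D$ from $(f5)(ii)$ for $f$ does not transfer verbatim to $\underline{f}$, because $\underline{\hat{F}}$ is \emph{flat} on intervals where $\hat{F}$ may be strictly increasing. One has to patch the original estimate (which is valid on the $f$-pieces, up to constants carried over from the previous transition) with the finitely many jumps at the $t_j$'s; the resulting constant $D' \geq D$ will depend on $D$ and on the finitely many values $\{\underline{\hat{F}}(t_j)\}_{j=1}^{k_\nu}$, whose finiteness is precisely what \eqref{2.9JT} guarantees.
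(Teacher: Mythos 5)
The paper does not actually prove this lemma; it only remarks that, ``As in \cite{JT2}, it is easy to check.'' Your treatment of (i)--(iv) and of $(f5)(i)$ for $\underline{f}$ is fine, and your structural observation for (v) --- decomposing $(0,\infty)$ into $f$-intervals and $\nu t$-intervals with $\underline{\hat F}=\hat F+\text{const}$ on the former and $\underline{\hat F}$ locally constant on the latter --- is the right idea. But the nonnegativity argument has a small hole: on an $f$-interval you invoke $\hat F\geq 0$ and conclude $\underline{\hat F}=\hat F+c\geq 0$, which requires $c\geq 0$; that is true (if $a$ is the left endpoint of the interval then $\underline F(t)=\underline F(a)+F(t)-F(a)$ there, so $c=F(a)-\underline F(a)\geq 0$ by item (ii)), but ``walking across the pieces'' does not deliver it for free, since $\hat F$ need not be monotone and the constant could a priori be negative.

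The genuine gap is $(f5)(ii)$ for $\underline f$, which you sketch but never prove, and both the stated obstacle and the proposed remedy are off. Flatness of $\underline{\hat F}$ on a $\nu t$-interval is not an obstruction: there $\underline{\hat F}(t)=\underline{\hat F}(t_j)$ with $t_j$ the left endpoint, and $(f5)(ii)$ for $f$ applied at the comparison point $t_j$ handles it. And a constant $D'$ built from ratios of the values $\underline{\hat F}(t_j)$ is not robust, since nothing prevents some $\hat F(t_j)$, hence $\underline{\hat F}(t_j)$, from vanishing. In fact $D'=D$ works. Set $\sigma(t)=t$ on $f$-intervals and $\sigma(t)=$ the left endpoint on $\nu t$-intervals, and let $\alpha(t)$ be the left endpoint of the $f$-interval containing $\sigma(t)$; then
\[
\underline{\hat F}(t)=\hat F(\sigma(t))+e(t),\qquad e(t):=F(\alpha(t))-\underline F(\alpha(t))=\int_0^{\alpha(t)}\bigl(f-\underline f\bigr)\,d\tau\geq 0,
\]
with both $\sigma$ and $e$ nondecreasing (the latter since $f\geq\underline f$ and $\alpha$ is nondecreasing). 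Hence, for $0\leq t\leq\bar t$, using $\sigma(t)\leq\sigma(\bar t)$, $(f5)(ii)$ for $f$, $e(t)\leq e(\bar t)$ and $D\geq 1$,
\[
\underline{\hat F}(t)=\hat F(\sigma(t))+e(t)\leq D\,\hat F(\sigma(\bar t))+D\,e(\bar t)=D\,\underline{\hat F}(\bar t),
\]
which is exactly $(f5)(ii)$ for $\underline f$ with the same constant $D$. This also re-proves $\underline{\hat F}\geq 0$, since both summands in the first display are nonnegative.
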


\begin{cor}\label{corollario 2.1}
\begin{compactenum}[(i)]
\item $g(x, t)\leq f(t)$, $G(x, t)\leq F(t) $ for all $(x, t)\in \R^{N} \times \R$.
\item  $g(x, t)= f(t) \mbox{ if } |t|< r_{\nu}$.
\item For any $\delta>0$ there exists $C_{\delta}>0$ such that 
$$
|g(x, t)|\leq \delta |t| + C_{\delta} |t|^{p} \quad \mbox{ for all } (x, t)\in \R^{N}\times \R.
$$
\item if $f(t)$ satisfies $(f5)$-$(ii)$, then $g(x, t)$ satisfies 
$$
\hat{G}(x,t)\leq D^{k_{\nu}} \hat{G}(x,\bar{t}) \quad \mbox{ for all } 0 \leq t \leq \bar{t},
$$
where $\hat{G}(x, t)= \frac{1}{2} g(x, t)t- G(x, t)$, $D\geq 1$ is given in $(f5)$-$(ii)$ and $k_{\nu}$
is given in \eqref{2.9JT}.
\end{compactenum}
\end{cor}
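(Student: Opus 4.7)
The plan is to verify (i)--(iv) by exploiting the convex-combination structure $g(x,t)=\chi(x)f(t)+(1-\chi(x))\underline f(t)$, together with Lemmas \ref{lemma 2.1} and \ref{lemma 2.2}. Parts (i)--(iii) are routine; the real content lies in (iv).

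For (i), since $\chi(x)\in[0,1]$ and $\underline f(t)\le f(t)$ for $t\ge 0$ by Lemma \ref{lemma 2.2}(iii) (both sides being zero for $t\le 0$), the convex combination defining $g$ is dominated pointwise by $f(t)$; integrating from $0$ to $t$ yields $G(x,t)\le F(t)$. For (ii), the defining property of $r_\nu$ gives $\underline f(t)=f(t)$ whenever $|t|\le r_\nu$, so both summands of $g(x,t)$ collapse to $f(t)$. For (iii), combine (i) with the growth estimate \eqref{2.7}.

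The main step is (iv). Write $\hat G(x,t)=\chi(x)\hat F(t)+(1-\chi(x))\underline{\hat F}(t)$; since $\chi(x)\in[0,1]$, it suffices to establish the two inequalities
\[
\hat F(\xi)\le D^{k_\nu}\hat F(t), \qquad \underline{\hat F}(\xi)\le D^{k_\nu}\underline{\hat F}(t) \qquad (0\le\xi\le t),
\]
after which the conclusion follows by convex combination. The first inequality is immediate from $(f5)$-$(ii)$ together with $D\ge 1$. For the second, I exploit the structural description of $\underline f$: by \eqref{2.9JT} and the regular-value choice of $\nu$, the equation $f(t)=\nu t$ has exactly $k_\nu$ positive solutions $0<t_1<\dots<t_{k_\nu}$, which partition $[0,\infty)$ into $k_\nu+1$ sub-intervals on each of which $\underline f(\tau)$ coincides identically with either $f(\tau)$ or with $\nu\tau$. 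A direct differentiation gives $\tfrac{d}{d\tau}\underline{\hat F}(\tau)=\tfrac12[\underline f'(\tau)\tau-\underline f(\tau)]$, which vanishes on every interval where $\underline f(\tau)=\nu\tau$; hence $\underline{\hat F}$ is constant there, and on each \emph{$f$-interval} it differs from $\hat F$ by the nonnegative additive constant $F(a)-\underline F(a)=\int_0^a(f-\underline f)\ge 0$ determined by the left endpoint $a$. Iterating the basic inequality $\hat F(\xi)\le D\hat F(t)$ across the at most $k_\nu$ crossings traversed between $\xi$ and $t$, while using the continuity of $\underline{\hat F}$ at each $t_j$ to stitch consecutive sub-intervals together, picks up at most one factor of $D$ per crossing and yields $\underline{\hat F}(\xi)\le D^{k_\nu}\underline{\hat F}(t)$.

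The main obstacle is carrying out the chaining argument cleanly: after each regime switch the offset between $\underline{\hat F}$ and $\hat F$ changes, and one must use the continuity matching $\underline{\hat F}(t_j^-)=\underline{\hat F}(t_j^+)$ together with the nonnegativity $\underline{\hat F}\ge 0$ from Lemma \ref{lemma 2.2}(v) to propagate the factor $D$ from one interval to the next without the estimate degenerating. Since the entire statement is pointwise in $x$, no new input from the fractional setting is needed here, and the argument closely parallels the corresponding construction in \cite{JT2}.
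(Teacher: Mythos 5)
Your proof is correct and matches the route the paper implicitly takes when it writes ``As in \cite{JT2}, it is easy to check\ldots'': the convex-combination decomposition $\hat G(x,t)=\chi(x)\hat F(t)+(1-\chi(x))\underline{\hat F}(t)$, the observation that $\underline{\hat F}$ is constant on $\nu\tau$-intervals and a nonnegative offset of $\hat F$ on $f$-intervals, and the chaining through the crossings $t_1<\cdots<t_{k_\nu}$ using continuity of $\underline{\hat F}$ are exactly the Jeanjean--Tanaka mechanism. The only cosmetic imprecision is that the factors of $D$ are incurred on the $f$-intervals rather than literally ``per crossing''; since the intervals alternate, there are at most $(k_\nu+1)/2\le k_\nu$ such intervals, so the stated $D^{k_\nu}$ bound is unaffected.
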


\noindent
In what follows, we investigate the existence of positive solutions $u_{\e}$ of the following modified problem
\begin{equation} \label{2.15}
\varepsilon^{2s} (-\Delta)^{s} u + V(x) u= g(x, u) \quad \mbox{in } \R^{N} 
\end{equation}
with the property
$$
|u_{\varepsilon}(x)|\leq r_{\nu} \mbox{ for } x\in \R^{N} \setminus \Lambda'.
$$
In view of the definition of $g$, these functions $u_{\varepsilon}$ are also solutions of  (\ref{P}).

\subsection{Mountain pass argument}
Using the change of variable $v(x)= u(\varepsilon x)$, it is possible to prove that \eqref{2.15} is equivalent to the following problem
\begin{equation}\label{2.17}
(-\Delta)^{s} v + V(\varepsilon x) v = g(\varepsilon x, v) \quad \mbox{ in } \R^{N}.
\end{equation}
The energy functional associated with (\ref{2.17}) is given by 
$$
J_{\varepsilon}(v)= \frac{1}{2} \int_{\R^{N}} |(-\Delta)^{\frac{s}{2}} v|^{2} + V(\varepsilon x) v^{2} dx -
\int_{\R^{N}} G(\varepsilon x, v) dx \quad \forall v\in H^{s}_{\varepsilon}
$$
where the fractional space
$$
H^{s}_{\varepsilon}= \Bigl\{v \in H^{s}(\R^{N}) : \int_{\R^{N}} V(\varepsilon x) v^{2} dx < \infty \Bigr\}
$$
is endowed with the norm 
$$
\|v\|_{H^{s}_{\varepsilon}}^{2} = \int_{\R^{N}} |(-\Delta)^{\frac{s}{2}} v|^{2} + V(\varepsilon x) v^{2} dx .
$$

\noindent
Since $V_{0}>0$, we can endow $H^{s}(\R^{N})$ with the following equivalent norm
$$
\|v\|_{H^{s}}^{2}= \int_{\R^{N}} |(-\Delta)^{\frac{s}{2}} v|^{2} + V_{0} v^{2} dx.
$$
Clearly,
\begin{equation}\label{2.18}
\|v\|_{H^{s}} \leq \|v\|_{H^{s}_{\varepsilon}}
\end{equation}
so we get $H^{s}_{\varepsilon} \subset H^{s}(\R^{N})$
and $H^{s}_{\varepsilon}$ is continuously embedded into $L^{r}(\R^{N})$ 
for any $2 \leq r \leq 2^{*}_{s}$, and there exists $C'_{r}> 0$ such that
\begin{equation}\label{2.19}
\|v\|_{L^{r}(\R^{N})} \leq C'_{r} \|v\|_{H^{s}}.
\end{equation}

\noindent
We start by proving that $J_{\e}$ possesses a mountain pass geometry that is uniform with respect to $\e$. 
\begin{lem} \label{proposizione 2.1}
$J_{\varepsilon} \in C^{1}(H^{s}_{\varepsilon}, \R)$ and satisfies the following properties: 
\begin{compactenum}[(i)]
\item $J_{\varepsilon}(0)=0$;
\item there exist $\rho_{0} >0$ and $\delta_{0}>0 $ independent of $\varepsilon \in (0,1]$
such that
\begin{align*}
&J_{\varepsilon}(v) \geq \delta_{0} \mbox{ for all } \|v\|_{H^{s}} = \rho_{0}\\
&J_{\varepsilon}(v) > 0 \mbox{ for all } 0 < \|v\|_{H^{s}} \leq \rho_{0};
\end{align*}
\item there exist $v_{0} \in C^{\infty}_{0}(\R^{N})$ and $\varepsilon_{0}> 0$
such that $J_{\varepsilon}(v_{0})<0$ for all $\varepsilon \in (0, \varepsilon_{0}]$.
\end{compactenum}
\end{lem}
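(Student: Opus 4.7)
\smallskip

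\noindent
\textbf{Plan of proof.} Part (i) is immediate from $G(\cdot,0)\equiv 0$. The smoothness $J_\varepsilon\in C^1(H^s_\varepsilon,\R)$ is a routine consequence of Corollary \ref{corollario 2.1}(iii) together with the standard differentiation-under-the-integral argument for superposition operators with subcritical growth, so I would state this and move on.

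\smallskip

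\noindent
For (ii), the plan is to exploit the subcritical growth estimate from Corollary \ref{corollario 2.1}(iii), namely $|g(x,t)|\le \delta|t|+C_\delta|t|^p$, which yields $|G(x,t)|\le \tfrac{\delta}{2}t^2+\tfrac{C_\delta}{p+1}|t|^{p+1}$. Combining this with \eqref{2.18} and \eqref{2.19} gives, for every $v\in H^s_\varepsilon$,
\[
J_\varepsilon(v)\ge \tfrac{1}{2}\|v\|_{H^s_\varepsilon}^2-\tfrac{\delta}{2}\|v\|_{L^2}^2-\tfrac{C_\delta}{p+1}\|v\|_{L^{p+1}}^{p+1}
\ge \tfrac{1}{2}\|v\|_{H^s}^2-\tfrac{\delta}{2V_0}\|v\|_{H^s}^2-C\,C_\delta\|v\|_{H^s}^{p+1}.
\]
Choosing $\delta<V_0/2$ leaves $J_\varepsilon(v)\ge \tfrac{1}{4}\|v\|_{H^s}^2-C'\|v\|_{H^s}^{p+1}$, and since $p+1>2$, the existence of $\rho_0,\delta_0>0$ (independent of $\varepsilon$) follows. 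Both bounds $J_\varepsilon(v)\ge \delta_0$ on the sphere and $J_\varepsilon(v)>0$ inside the ball come from the same inequality by shrinking $\rho_0$ if necessary.

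\smallskip

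\noindent
For (iii), I would fix $w\in C^\infty_0(\R^N)$ with $w\ge 0$, $w\not\equiv 0$, and exploit the fact that $0\in\Lambda'$: for $\varepsilon$ small, $\varepsilon\,\mathrm{supp}(w)\subset\Lambda'$, where $\chi\equiv 1$, so $G(\varepsilon x,v)=F(v)$ on $\mathrm{supp}(w)$. The $H^s_\varepsilon$-norm of $tw$ stays bounded uniformly for $\varepsilon\in(0,1]$ because $V$ is locally bounded and $w$ is compactly supported.

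In case $(f4)$, the Ambrosetti--Rabinowitz condition forces $F(t)\ge c_1 t^\mu-c_2$ with $\mu>2$, hence
\[
J_\varepsilon(tw)\le \tfrac{t^2}{2}\bigl([w]^2+M\|w\|_{L^2}^2\bigr)-c_1 t^\mu\|w\|_{L^\mu}^\mu+c_2|\mathrm{supp}\,w|
\]
is negative for $t$ large; I then set $v_0=tw$. In case $(f5)$ with $a=+\infty$, the same test function works because $F(t)/t^2\to+\infty$ and one can argue by dominated convergence. In case $(f5)$ with $a<\infty$ and $V(0)=\inf_\Lambda V<a$, one has $F(t)=\tfrac{a}{2}t^2+o(t^2)$; using a rescaled test function $w_R(x):=\psi(x/R)$ with $\psi\in C^\infty_0(\R^N)$, $\psi\ge 0$, computation gives $[w_R]^2=R^{N-2s}[\psi]^2$ while $\|w_R\|_{L^2}^2=R^N\|\psi\|_{L^2}^2$, so the ratio $[w_R]^2/\|w_R\|_{L^2}^2\to 0$ as $R\to\infty$. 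Hence for $R$ large enough (and $v_0:=tw_R$ with $t$ large) one gets
\[
t^{-2}J_\varepsilon(tw_R)\ \longrightarrow\ \tfrac{1}{2}[w_R]^2+\tfrac{V(0)-a}{2}\|w_R\|_{L^2}^2<0
\]
uniformly as $\varepsilon\to 0$ and $t\to\infty$, which produces the required $v_0$ and $\varepsilon_0$.

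\smallskip

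\noindent
\textbf{Main difficulty.} The only genuinely delicate point is the asymptotically linear case $(f5)$ with $a<\infty$: here the nonlinearity does not dominate the quadratic term pointwise, and one must use the gap $a-V(0)>0$ from \eqref{2.6} together with the spreading trick $w\mapsto w(\cdot/R)$ to make the mass term overwhelm the Gagliardo seminorm. Uniformity in $\varepsilon$ then requires controlling $\int V(\varepsilon x)w_R^2\,dx\to V(0)\|w_R\|_{L^2}^2$, which follows from continuity of $V$ at $0$ and compact support of $w_R$.
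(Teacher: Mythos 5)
Your parts (i) and (ii) match the paper's proof essentially verbatim: drop $\underline F$ in favour of $F$, apply the subcritical bound $|F(t)|\le\tfrac{\delta}{2}t^2+C_\delta|t|^{p+1}$ with $\delta$ comparable to $V_0/2$, pass from $\|\cdot\|_{H^s_\varepsilon}$ to $\|\cdot\|_{H^s}$ via \eqref{2.18}--\eqref{2.19}, and exploit $p+1>2$. Nothing to add there.

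For (iii) you genuinely diverge from the paper. The paper does not construct $v_0$ by hand: it observes that $V(0)<a$ implies the function $h(t)=-V(0)t+g(0,t)$ satisfies the Berestycki--Lions hypotheses $(h1)$--$(h3)$, invokes Lemma \ref{prop 5.2} (Theorem 1 of \cite{A2}) to conclude that the autonomous functional $\Phi_0$ already has a mountain-pass geometry, picks a $v_0\in C^\infty_0$ with $\Phi_0(v_0)<0$, and then notes $J_\varepsilon(v_0)\to\Phi_0(v_0)<0$ since $0\in\Lambda'$ and $V$ is continuous. You instead build $v_0$ explicitly: a dilation $t w$ for the superlinear cases and a double scaling $t\,\psi(\cdot/R)$ for the asymptotically linear case, using the exponent gap $[w_R]^2/\|w_R\|_{L^2}^2=R^{-2s}[\psi]^2/\|\psi\|_{L^2}^2\to 0$ to let the mass term beat the Gagliardo seminorm. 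Your construction is correct and more self-contained, at the price of a three-way case split which the paper avoids; note that $(f4)$ already forces $a=\infty$, so your first two cases could be merged. Two small precision points: in the $(f5)$, $a=\infty$ case the passage $t^{-2}\int F(tw)\,dx\to\infty$ is Fatou (or monotone convergence), not dominated convergence, since the limit is $+\infty$; and in the $(f4)$ bound $F(t)\ge c_1 t^\mu-c_2$, the tail estimate $\int F(tw)\ge c_1 t^\mu\int w^\mu - c_2|\mathrm{supp}\,w|$ only kicks in once $tw(x)$ exceeds the threshold where the AR-derived lower bound is valid, which is harmless but worth a word. Both routes reduce to the same endpoint: a fixed $v_0$ with $\Phi_0(v_0)<0$, then $J_\varepsilon(v_0)\to\Phi_0(v_0)$ as $\varepsilon\to 0$.
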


\begin{proof}
Obviously, $J_{\varepsilon} \in C^{1}(H^{s}_{\varepsilon}, \R)$ and  $J_{\varepsilon}(0)=0$.
Using $\underline{F}\leq F$ and taking $\delta = \frac{V_{0}}{2}$
in \eqref{2.7}, we get
\begin{align*}
J_{\varepsilon}(v) &= \frac{1}{2} \|v\|_{H^{s}_{\varepsilon}}^{2} - \int_{\R^{N}} 
\chi(\varepsilon x)F(v) + (1- \chi(\varepsilon x))\underline{F}(v) \, dx \\
& \geq \frac{1}{2} \|v\|_{H^{s}_{\varepsilon}}^{2} -\int_{\R^{N}} F(v) \, dx \\
& \geq \frac{1}{2} \|v\|_{H^{s}}^{2} - \frac{V_{0}}{4}\|v\|_{L^{2}(\R^{N})}^{2} - 
C_{\frac{V_{0}}{2}} \|v\|_{L^{p+1}(\R^{N})}^{p+1} \\
& \geq \frac{\|v\|_{H^{s}}^{2}}{4} - \tilde{C}_{p+1} C_{\frac{V_{0}}{2}} \|v\|_{H^{s}}^{p+1},
\end{align*}
where we used \eqref{2.18} and (\ref{2.19}) with $r=p+1$. Thus $(ii)$ is satisfied. 

In order to verify that $(iii)$ holds, we take $v_{0}\in C^{\infty}_{0}(\R^{N})$ such that
$$
\frac{1}{2} \int_{\R^{N}} |(-\Delta)^{\frac{s}{2}} v_{0}|^{2} + V(0) v_{0}^{2} \,dx - \int_{\R^{N}} F(v_{0}) \,dx < 0. 
$$
This choice is lawful due to the fact that $V(0)<\lim_{z\rightarrow \infty}\frac{f(z)}{z}$, so the existence of a such $v_{0}$ follows from Theorem $1$ in \cite{A2} (see Lemma \ref{prop 5.2}), where is proved that  
$$
v \mapsto \frac{1}{2} \int_{\R^{N}} |(-\Delta)^{\frac{s}{2}} v|^{2} + V(0) v^{2} \,dx - 
\int_{\R^{N}} F(v) \,dx
$$
has a mountain pass geometry. Since $0\in \Lambda'$, we can observe that 
$$
J_{\varepsilon}(v_{0}) \rightarrow \frac{1}{2}\int_{\R^{N}} |(-\Delta)^{\frac{s}{2}} v_{0}|^{2} + V(0)v_{0}^{2} \,dx -
\int_{\R^{N}} F(v_{0}) \,dx <0 \mbox{  as  } \varepsilon \rightarrow 0,
$$
that is $(iii)$ is verified  for $\e$ sufficiently small.
\end{proof}

\noindent
Since $J_{\e}$ has a mountain pass geometry, for any $\varepsilon \in (0, \varepsilon _{0}]$ we can define the mountain pass value
\begin{equation}\label{2.20}
c_{\varepsilon} = \inf_{\gamma \in \Gamma_{\varepsilon}} \max_{t\in [0, 1]} 
J_{\varepsilon}(\gamma(t))
\end{equation}
where
\begin{equation}\label{2.21}
\Gamma_{\varepsilon}= \left\{\gamma \in C([0,1],H^{s}_{\varepsilon}) : \gamma(0)=0 \mbox{ and } 
J_{\varepsilon} (\gamma (1))< 0\right\}.
\end{equation}
Using Lemma \ref{proposizione 2.1}, we are able to give the following estimate for $c_{\varepsilon}$.

\begin{cor} \label{corollario 2.2}
There exist $m_{1},m_{2} >0$ such that for any  $\varepsilon \in (0, \varepsilon_{0}]$
$$
m_{1} \leq c_{\varepsilon} \leq m_{2}.
$$
\end{cor}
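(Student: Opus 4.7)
The plan is to prove the two bounds separately, both as uniform consequences of Lemma \ref{proposizione 2.1}.

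For the lower bound, I would fix an arbitrary $\gamma \in \Gamma_{\varepsilon}$ and argue by continuity that the path must cross the sphere of radius $\rho_{0}$ in the $\|\cdot\|_{H^{s}}$-norm. Indeed, since $J_{\varepsilon}(\gamma(1))<0$ while by part $(ii)$ of Lemma \ref{proposizione 2.1} one has $J_{\varepsilon}(v)>0$ whenever $0<\|v\|_{H^{s}}\leq \rho_{0}$, it follows that $\|\gamma(1)\|_{H^{s}}>\rho_{0}$. Since $t\mapsto \|\gamma(t)\|_{H^{s}}$ is a continuous real-valued function vanishing at $t=0$, the intermediate value theorem yields $\bar{t}\in(0,1)$ with $\|\gamma(\bar{t})\|_{H^{s}}=\rho_{0}$, whence $\max_{t\in[0,1]}J_{\varepsilon}(\gamma(t))\geq J_{\varepsilon}(\gamma(\bar{t}))\geq \delta_{0}$. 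Taking the infimum over $\Gamma_{\varepsilon}$ gives $c_{\varepsilon}\geq \delta_{0}=:m_{1}$, uniformly in $\varepsilon\in(0,\varepsilon_{0}]$.

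For the upper bound, I would use the test path $\gamma(t)=tv_{0}$, with $v_{0}\in C^{\infty}_{0}(\R^{N})$ as in part $(iii)$ of Lemma \ref{proposizione 2.1}. This path lies in $\Gamma_{\varepsilon}$ for every $\varepsilon\in(0,\varepsilon_{0}]$, so $c_{\varepsilon}\leq \max_{t\in[0,1]}J_{\varepsilon}(tv_{0})$. Using Corollary \ref{corollario 2.1} together with $f\geq 0$ on $[0,\infty)$ (Lemma \ref{lemma 2.1}) and the fact that $G(x,\cdot)=0$ on $(-\infty,0]$, the nonlinear term $\int_{\R^{N}} G(\varepsilon x, tv_{0})\,dx$ is nonnegative, so
\begin{equation*}
J_{\varepsilon}(tv_{0})\leq \frac{t^{2}}{2}\left([v_{0}]^{2}+\int_{\R^{N}}V(\varepsilon x)v_{0}^{2}\,dx\right).
\end{equation*}

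Now since $v_{0}$ has compact support and $V$ is locally H\"older continuous (in particular locally bounded), there exists $M=M(\varepsilon_{0},v_{0})<\infty$ such that $V(\varepsilon x)\leq M$ for every $x\in\supp(v_{0})$ and every $\varepsilon\in(0,\varepsilon_{0}]$. Consequently $\max_{t\in[0,1]}J_{\varepsilon}(tv_{0})\leq \frac{1}{2}([v_{0}]^{2}+M\|v_{0}\|_{L^{2}(\R^{N})}^{2})=:m_{2}$, independent of $\varepsilon$. The main subtlety here is just to recognize that, although the functional depends on $\varepsilon$, the quantities entering this estimate are controlled solely by $v_{0}$ and by the local sup of $V$ on a fixed compact neighborhood of $\supp(v_{0})$; no delicate analysis is required, so I do not anticipate a real obstacle.
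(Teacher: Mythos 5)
Your proof is correct and follows essentially the same approach as the paper: the lower bound comes from the sphere-crossing argument via Lemma \ref{proposizione 2.1}-(ii), and the upper bound uses the segment $\gamma_0(t)=tv_0$ with $v_0$ from Lemma \ref{proposizione 2.1}-(iii). The only difference is that you explicitly verify the finiteness of the upper bound (via $G\geq 0$ and local boundedness of $V$ on a fixed compact set), whereas the paper simply sets $m_2 = \sup_{\varepsilon\in(0,\varepsilon_0]}\max_{t\in[0,1]}J_\varepsilon(\gamma_0(t))$ and leaves the finiteness implicit.
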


\begin{proof}
For any $\gamma \in \Gamma_{\varepsilon}$ we have
\begin{align*}
\gamma ([0,1]) \cap \{v\in H^{s}_{\varepsilon} : \|v\|_{H^{s}} = \rho\} \neq \emptyset.
\end{align*}
Hence, by using Lemma \ref{proposizione 2.1}, we can deduce that
\begin{align*}
\max_{t\in [0,1]} J_{\varepsilon}(\gamma(t)) \geq \inf_{\|v\|_{H^{s}}= \rho_{0}}
J_{\varepsilon}(v)\geq \delta_{0}.
\end{align*}
Set $\gamma_{0}(t)= tv_{0}$, where $v_{0} \in C_{0}^{\infty}(\R^{N})$ is obtained in Lemma \ref{proposizione 2.1}.
Then we can see that
\begin{align*}
c_{\varepsilon} &= \inf_{\gamma \in \Gamma_{\varepsilon}} \Bigl(\max_{t\in [0,1]} 
J_{\varepsilon}(\gamma(t)) \Bigr) 
\leq \max_{t\in [0,1]} J_{\varepsilon} (\gamma_{0}(t)) 
\leq \sup_{\varepsilon \in (0, \varepsilon_{0}]} \Bigl( \max_{t\in [0,1]} J_{\varepsilon}
(\gamma_{0}(t))\Bigr).
\end{align*}
Therefore, we put $m_{1}= \delta_{0}$ and $m_{2}= \sup_{\varepsilon \in (0, \varepsilon_{0}]} 
\Bigl( \max_{t\in [0,1]} J_{\varepsilon} (\gamma_{0}(t))\Bigr) $.
\end{proof}

\noindent
Next, we investigate the boundedness of Cerami sequences corresponding to the mountain pass value $c_{\e}$. 
We recall that the existence of a Cerami sequence for $J_{\e}$ follows by the following variant version of the mountain pass theorem.
\begin{thm}\cite{ekeland}\label{teorema 3.1}
Let $X$ be a real Banach space with its dual $X^{*}$, and suppose that $I\in C^{1}(X, \R)$ satisfies
$$
\max\{I(0), I(e)\}\leq \mu<\alpha\leq \inf_{\|x\|=\rho} I(x),
$$
for some $\mu<\alpha$, $\rho>0$ and $e\in X$ with $\|e\|>\rho$. Let $c\geq \alpha$ be characterized by
$$
c=\inf_{\gamma\in \Gamma}\max_{t\in [0, 1]} I(\gamma(t)),
$$
where
$$
\Gamma=\{\gamma\in C([0, 1], X): \gamma(0)=0, \gamma(1)=e\}
$$
is the set of continuous paths joining $0$ and $e$.
Then there exists a Cerami sequence $(x_{j})\subset X$ at the level $c$ that is
$$
I(x_{j})\rightarrow c \mbox{ and } (1+\|x_{j}\|)\|I'(x_{j})\|_{*}\rightarrow 0
$$ 
as $j\rightarrow \infty$.
\end{thm}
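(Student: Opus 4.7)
The strategy is proof by contradiction via a Cerami-type deformation of a near-optimal mountain-pass path. The factor $(1+\|x\|)$ in the Cerami conclusion will be produced by using a pseudo-gradient weighted by $1/(1+\|x\|)$, whose flow must be shown to be globally defined and quantitatively effective at decreasing $I$.

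\emph{Setup by contradiction.} Assume no Cerami sequence at level $c$ exists. Negating this condition and using continuity of $I$ and $I'$, one obtains $\varepsilon_{0}\in\bigl(0,(\alpha-\mu)/2\bigr)$ and $\delta>0$ such that $(1+\|x\|)\|I'(x)\|_{*}\geq\delta$ on the strip $S:=\{x\in X:|I(x)-c|\leq\varepsilon_{0}\}$. On the open set $\{I'\neq 0\}$ I select a locally Lipschitz pseudo-gradient $V$ with $\|V(x)\|\leq 2\|I'(x)\|_{*}$ and $\langle I'(x),V(x)\rangle\geq\|I'(x)\|_{*}^{2}$. Fix $\chi\in C^{\infty}(\R,[0,1])$ with $\chi\equiv 1$ on $[c-\varepsilon_{0}/2,c+\varepsilon_{0}/2]$ and $\chi\equiv 0$ off $(c-\varepsilon_{0},c+\varepsilon_{0})$, and define
\[
W(x):=\chi(I(x))\,\frac{V(x)}{(1+\|x\|)\bigl(1+\|V(x)\|\bigr)},
\]
extended by $0$ off the support of $\chi\circ I$. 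Then $W$ is locally Lipschitz on $X$ and satisfies $\|W(x)\|\leq 1/(1+\|x\|)$ everywhere.

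\emph{Flow and quantitative deformation.} Let $\eta(s,x)$ solve $\dot{\eta}=-W(\eta)$, $\eta(0,x)=x$. The bound on $\|W\|$ gives global existence in $s$ and the a priori estimate $\|\eta(s,x)\|\leq\|x\|+s$. A direct computation, combining the pseudo-gradient inequality with $(1+\|x\|)\|I'(x)\|_{*}\geq\delta$ inside $S$, yields whenever $\eta(s,x)\in S$ a decrease rate of the form
\[
-\frac{d}{ds}I(\eta(s,x))\geq\frac{c_{1}\,\delta^{2}}{(1+\|x\|+s)^{3}}
\]
for some universal $c_{1}>0$. Integrating in $s$ shows that for every bounded set $B\subset X$ there exists a finite time $T=T(\varepsilon_{0},\delta,B)$ such that $\eta(T,x)\in\{I\leq c-\varepsilon_{0}/2\}$ for every $x\in B$ with $I(x)\leq c+\varepsilon_{0}/2$.

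\emph{Contradiction and obstacle.} By the very definition $c=\inf_{\gamma\in\Gamma}\max_{t}I(\gamma(t))$, pick $\gamma\in\Gamma$ with $\max_{t\in[0,1]}I(\gamma(t))<c+\varepsilon_{0}/2$, set $B:=\gamma([0,1])$ (compact in $X$), and take the corresponding $T$ from the previous step. The endpoints $\gamma(0)=0$ and $\gamma(1)=e$ satisfy $I\leq\mu<c-\varepsilon_{0}$, where $\chi\equiv 0$; hence $\eta(T,\cdot)$ fixes them, so $\tilde{\gamma}(t):=\eta(T,\gamma(t))$ lies in $\Gamma$ and satisfies $\max_{t\in[0,1]}I(\tilde{\gamma}(t))\leq c-\varepsilon_{0}/2<c$, contradicting the definition of $c$. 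The main obstacle is the quantitative deformation step: unlike in the Palais--Smale setting, the decrease rate of $I$ along the weighted flow is only $O(1/(1+\|\eta\|)^{3})$, not uniform. It is rescued by the linear-growth estimate $\|\eta(s,x)\|\leq\|x\|+s$ (a consequence of the weight) together with the correct order of operations---first pick the compact near-optimal path $\gamma$, and only then choose the deformation time $T$ depending on $\gamma([0,1])$---so that finite $T$ suffices to beat the fixed gap $\varepsilon_{0}/2$. The weight $1/(1+\|x\|)$ in $W$ thus simultaneously ensures global existence of the flow and extracts exactly the Cerami factor $(1+\|x\|)$ in the conclusion.
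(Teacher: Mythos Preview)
The paper does not prove this theorem; it is quoted from Ekeland's book and used as a black box. So there is no ``paper's own proof'' to compare against, and your attempt must be judged on its own.

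Your overall strategy --- contradiction plus a quantitative deformation along a weighted pseudo-gradient flow --- is the standard route, but the specific weight you chose creates a genuine gap at the deformation step. With
\[
W(x)=\chi(I(x))\,\frac{V(x)}{(1+\|x\|)\bigl(1+\|V(x)\|\bigr)},
\]
the worst-case decrease rate along the flow occurs when $\|I'(\eta)\|_{*}$ sits at the lower edge allowed by the contradiction hypothesis, namely $\|I'(\eta)\|_{*}\sim\delta/(1+\|\eta\|)$; in that regime the rate is only of order $\delta^{2}/(1+\|\eta\|)^{3}$, exactly as you wrote. Using your growth bound $\|\eta(s,x)\|\leq\|x\|+s$ (or the sharper $(1+\|\eta\|)^{2}\leq(1+\|x\|)^{2}+2s$ that follows from $\|W\|\leq 1/(1+\|x\|)$), the time-integral of this rate over $[0,\infty)$ is \emph{finite}, of order $\delta^{2}/(1+\|x\|)$. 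Hence the total drop of $I$ achievable by the flow is bounded, and there is no reason it exceeds the fixed gap $\varepsilon_{0}$: your assertion ``for every bounded $B$ there exists a finite time $T$'' simply does not follow from the displayed inequality. Choosing $B=\gamma([0,1])$ after fixing $\varepsilon_{0},\delta$ does not rescue this, and shrinking $\varepsilon_{0}$ afterwards to match $B$ forces a new near-optimal path, possibly with larger diameter --- the argument is circular.

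The fix is to drop the extra factor $1/(1+\|x\|)$ and take instead $W(x)=\chi(I(x))\,V(x)/\|V(x)\|$ (extended by zero). Then $\|W\|\leq 1$, so still $\|\eta(s,x)\|\leq\|x\|+s$, and the decrease rate is at least
\[
\chi\,\frac{\|I'(\eta)\|_{*}}{2}\;\geq\;\frac{\chi\,\delta}{2(1+\|x\|+s)},
\]
whose integral over $[0,T]$ equals $(\delta/2)\log\bigl[(1+\|x\|+T)/(1+\|x\|)\bigr]\to\infty$ as $T\to\infty$. Now for any compact $B=\gamma([0,1])$ one can indeed choose $T=T(\varepsilon_{0},\delta,B)$ so that the decrease exceeds $\varepsilon_{0}$, and the rest of your argument (endpoints fixed because $\chi\equiv 0$ there, deformed path still in $\Gamma$, contradiction with the definition of $c$) goes through verbatim. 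The Cerami factor $(1+\|x\|)$ is already encoded in the contradiction hypothesis $(1+\|x\|)\|I'(x)\|_{*}\geq\delta$; inserting it a second time into $W$ is precisely what breaks the estimate.
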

\noindent

Using Lemma \ref{proposizione 2.1} and Theorem \ref{teorema 3.1}, we can deduce that
for all $\varepsilon \in (0,\varepsilon_{0}]$ there exists a Cerami sequence
$(v_{j}) \subset H^{s}_{\varepsilon}$ such that
\begin{align*}
&J_{\varepsilon} (v_{j}) \rightarrow b_{\varepsilon} \\
&(1+ \|v_{j}\|_{H^{s}_{\varepsilon}})\|J'_{\varepsilon}(v_{j})\|_{H^{-s}_{\varepsilon}} \rightarrow 0 
\mbox{ as } j\rightarrow \infty.
\end{align*}

The next result states that every critical point $v_{\e}$ of $J_{\e}$ at the level $c_{\e}$ is uniformly bounded with respect to $\e$, that is 
\begin{equation}\label{3.1}
\limsup_{\varepsilon\rightarrow 0} \|v_{\varepsilon}\|_{H^{s}_{\varepsilon}} < \infty.
\end{equation}

\begin{lem}\label{proposizione 3.2}
Assume that $f$ satisfies $(f1)$-$(f3)$ and either $(f4)$ or $(f5)$. Suppose that there exists a sequence 
$(v_{\varepsilon})_{\varepsilon\in (0, \varepsilon_{1}]}$, with $\e_{1}\in (0, \e_{0}]$, such that
\begin{align}
&v_{\varepsilon} \in H^{s}_{\varepsilon}, \nonumber\\
&J_{\varepsilon}(v_{\varepsilon})\in [m_{1}, m_{2}] \quad \forall \varepsilon 
\in (0, \varepsilon_{1}], \label{3.4} \\
&(1+ \|v_{\varepsilon}\|_{H^{s}_{\varepsilon}})\|J'_{\varepsilon}(v_{\e})\|_{H^{-s}_{\varepsilon}}
\rightarrow 0 \mbox{ as } \varepsilon\rightarrow 0 \label{3.5}
\end{align}
with $0<m_{1}<m_{2}$. Then \eqref{3.1} holds.
\end{lem}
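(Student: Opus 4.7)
I argue by contradiction, assuming $t_\e := \|v_\e\|_{H^s_\e} \to \infty$ along some subsequence, and split according to which of $(f4)$ or $(f5)$ holds.

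Under $(f4)$ the estimate is the standard Ambrosetti--Rabinowitz one. Combining \eqref{3.4}--\eqref{3.5} gives $J_\e(v_\e) - \tfrac{1}{\mu} J'_\e(v_\e) v_\e \leq m_2 + o(1)$. The quadratic part of the left side equals $(\tfrac{1}{2}-\tfrac{1}{\mu})\|v_\e\|^2_{H^s_\e}$, while the nonlinear integrand splits according to $\chi(\e x)$: on $\{\chi > 0\}$, $(f4)$ forces $F(t) - \tfrac{1}{\mu} f(t) t \leq 0$ for $t \geq 0$; on $\{\chi < 1\}$, Lemma \ref{lemma 2.2} yields $\underline F(t) \leq \tfrac{\nu}{2} t^2$ and $\underline f(t) \geq 0$, hence $\underline F(t) - \tfrac{1}{\mu} \underline f(t) t \leq \tfrac{\nu}{2V_0} V(\e x) t^2$. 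The choice of $\nu$ in \eqref{2.13} then leaves a strictly positive coefficient in front of $\|v_\e\|^2_{H^s_\e}$, from which boundedness follows.

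Under $(f5)$ the starting identity is
\[
\int_{\R^N} \hat G(\e x, v_\e)\, dx = J_\e(v_\e) - \tfrac{1}{2} J'_\e(v_\e) v_\e \leq C,
\]
with $C$ independent of $\e$. Set $w_\e := v_\e/t_\e$, so $\|w_\e\|_{H^s_\e} = 1$, and pick $s_\e \in [0, 1]$ realizing $\max_{s \in [0,1]} J_\e(s v_\e)$; since $J_\e(v_\e) \geq m_1 > 0$, $s_\e > 0$. If $s_\e = 1$, then $J_\e(s v_\e) \leq J_\e(v_\e) \leq m_2$; if $s_\e \in (0, 1)$, the stationarity $J'_\e(s_\e v_\e) v_\e = 0$ yields $J_\e(s_\e v_\e) = \int \hat G(\e x, s_\e v_\e)\, dx$, and Corollary \ref{corollario 2.1}(iv), applied pointwise with $0 \leq s_\e v_\e^+(x) \leq v_\e^+(x)$ (recall $\hat G(x, t) = 0$ for $t \leq 0$), gives $J_\e(s_\e v_\e) \leq D^{k_\nu} C$. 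In either case $\max_{s \in [0,1]} J_\e(s v_\e) \leq D^{k_\nu} C + o(1)$. A Lions-type dichotomy now closes the vanishing case: if $\sup_y \int_{B_R(y)} w_\e^2 dx \to 0$ for some $R > 0$, Lemma \ref{lionslemma} gives $w_\e \to 0$ in $L^r(\R^N)$ for all $r \in (2, 2^*_s)$; fixing $L > 0$, eventually $L/t_\e \in (0, 1)$, so Corollary \ref{corollario 2.1}(iii) with $\delta < V_0/2$ yields
\[
D^{k_\nu} C + o(1) \geq J_\e(L w_\e) \geq \tfrac{L^2}{2}\bigl(1 - \tfrac{2\delta}{V_0}\bigr) - C_\delta L^{p+1}\|w_\e\|^{p+1}_{L^{p+1}(\R^N)} = \tfrac{L^2}{2}\bigl(1-\tfrac{2\delta}{V_0}\bigr) - o(1),
\]
a contradiction for $L$ large.

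The main obstacle is the non-vanishing case: there exist $R, \alpha > 0$ and $y_\e \in \R^N$ with $\int_{B_R(y_\e)} w_\e^2 dx \geq \alpha$, so up to subsequences the translates $\tilde w_\e(x) := w_\e(x + y_\e)$ have a nontrivial weak $H^s$-limit $\tilde w$, and $v_\e(x+y_\e) \to \infty$ a.e.\ on $\{\tilde w > 0\}$. The resolution depends on where $\e y_\e$ concentrates. If, up to a subsequence, $\e y_\e \to y_0$ with $\chi(y_0) > 0$, Fatou applied to $\chi(\e(\cdot + y_\e)) \hat F(v_\e(\cdot + y_\e))$ against the $\hat G$-bound forces $\hat F(+\infty) < \infty$, which corresponds to the asymptotically linear case $a < \infty$; one then passes to the limit in $J'_\e(v_\e)/t_\e = o(1)$ tested against translates $\psi(\cdot - y_\e)$, $\psi \in C^\infty_c(\R^N)$, to obtain a nontrivial $H^s$-solution of $(-\Delta)^s \tilde w + V(y_0) \tilde w = a \tilde w$ on $\R^N$; this is excluded by \eqref{2.6} since $V(y_0) \leq \inf_\Lambda V < a$ makes the Fourier symbol $|\xi|^{2s} + V(y_0) - a$ change sign and rules out $L^2$-solutions. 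If instead $\chi(y_0) = 0$ or $|\e y_\e| \to \infty$, then near $y_\e$ one has $g(\e x, t) = \underline f(t) \leq \nu t$ with $\nu < V_0$, and inserting a suitable cut-off bump centered at $y_\e$ into $J'_\e(v_\e) = o(1)$ produces $\|w_\e\|^2_{H^s_\e(B_R(y_\e))} \to 0$, contradicting the chosen lower bound on $w_\e$ there. The delicate asymptotically-linear sub-case is precisely where the global hypothesis \eqref{2.6} plays its essential role.
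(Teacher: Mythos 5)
Your $(f4)$ argument is correct and is the same as the paper's. For $(f5)$, the normalization $w_\e=v_\e/\|v_\e\|_{H^s_\e}$, the bound $\int_{\R^N}\hat G(\e x,v_\e)\,dx\le m_2+o(1)$, the use of Corollary~\ref{corollario 2.1}-$(iv)$ to control $\max_{s\in[0,1]}J_\e(sv_\e)$, and the vanishing-case lower bound $J_\e(Lw_\e)\gtrsim L^2/4$ all appear in the paper (Step~3 of Lemma~\ref{proposizione 3.2}), so the broad strategy matches.

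The gap is in your non-vanishing case~(b). The paper's dichotomy is on $\chi_\e w_\e$, not on $w_\e$: when $\chi_\e w_\e$ fails to vanish, concentration necessarily occurs in $\supp\chi_\e$, forcing $\e x_\e\to x_0$ with $\chi(x_0)\ne 0$, so the case you call~(b) never arises. By instead taking non-vanishing of $w_\e$, you create the sub-case $\e y_\e\to y_0$ with $\chi(y_0)=0$ but $\e y_\e$ bounded. There your claim ``near $y_\e$ one has $g(\e x,t)=\underline f(t)\le\nu t$'' is false: $\chi$ is continuous and compactly supported in $\Lambda$, so $\chi(y_0)=0$ does \emph{not} imply $\chi(\e x)=0$ for $x$ near $y_\e$; it is merely small. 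When $a<\infty$ this is harmless, since $g(\e x,t)t\le(\chi(\e x)C+\nu)t^2$ and the small factor absorbs. But when $a=\infty$ the product $\chi(\e(x+y_\e))\,f(v_\e(x+y_\e))v_\e(x+y_\e)$ is $0\cdot\infty$ on $\{\tilde w>0\}$, and neither your cut-off estimate nor a limit equation with a fixed coefficient follows. Nothing in your case~(a) rescues this, since there you assumed $\chi(y_0)>0$. The clean fix is precisely the paper's: base the vanishing/non-vanishing dichotomy on $\chi_\e w_\e$; then case~(b) is subsumed into the vanishing case, where the estimate $\int\chi_\e F(Lw_\e)\lesssim\delta L^2+\|w_\e\|_{L^{p+1}}^p\|\chi_\e w_\e\|_{L^{p+1}}L^{p+1}$ already closes the argument.

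Two smaller points. The limit equation in your case~(a) should carry the coefficient $\chi(y_0)a+(1-\chi(y_0))\nu$, not $a$, since $\chi(y_0)$ need not equal $1$. And invoking \eqref{2.6} to rule out the linear eigenvalue problem is both unnecessary and not available: Lemma~\ref{proposizione 3.2} does not assume \eqref{2.6}, and one does not need a sign change of the symbol --- $(-\Delta)^s w+cw=0$ has no nontrivial $H^s$ solution for any constant $c$, by the Fourier argument or, as the paper uses, by the Pohozaev identity.
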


\begin{proof}
Firstly, we assume that $(f4)$ holds. Let $(v_{\varepsilon})$ be a sequence satisfying \eqref{3.4} and \eqref{3.5}. 
Then we can see that \eqref{3.4} yields
\begin{equation}\label{3.6}
J_{\varepsilon}(v_{\varepsilon})= \frac{1}{2} \|v_{\varepsilon}\|^{2}_{H^{s}_{\varepsilon}}-
\int_{\R^{N}} (1-\chi(\varepsilon x)) \underline{F}(v_{\varepsilon}) + \chi(\varepsilon x) 
F(v_{\varepsilon}) \,dx \leq m_{2}.
\end{equation}
Moreover, by \eqref{3.5}, for any $\e$ sufficiently small we have
$$
|\langle J'_{\varepsilon}(v_{\varepsilon}), v_{\varepsilon}\rangle| \leq \|J'_{\varepsilon} (v_{\varepsilon})\|_{H^{-s}_{\varepsilon}}
\|v_{\varepsilon}\|_{H^{s}_{\varepsilon}} \leq \|J'_{\varepsilon}(v_{\varepsilon})\|_{H^{-s}_{\varepsilon}}
(1+\|v_{\varepsilon}\|_{H^{s}_{\varepsilon}})\leq 1, 
$$
that is
\begin{equation}\label{3.7}
\left|\|v_{\varepsilon}\|^{2}_{H^{s}_{\varepsilon}}-
\int_{\R^{N}} (1-\chi(\varepsilon x))\underline{f}(v_{\varepsilon})v_{\varepsilon} + \chi(\varepsilon x) 
f(v_{\varepsilon})v_{\varepsilon}\, dx \right| \leq 1.
\end{equation}
Taking into account \eqref{3.6}, \eqref{3.7} and $(f4)$  we get
$$
\Bigl(\frac{1}{2}- \frac{1}{\mu} \Bigr) \|v_{\varepsilon}\|^{2}_{H_{\varepsilon}} \leq \int_{\R^{N}} (1- \chi(\varepsilon x)) \Bigl(\underline{F}(v_{\varepsilon})-\frac{1}{\mu}f(v_{\varepsilon})v_{\varepsilon}\Bigr) \,dx+m_{2}+\frac{1}{\mu}.
$$
Using $(i)$ and $(iv)$ of Lemma \ref{lemma 2.2}, we know that  $t \underline{f}(t)\geq 0$ for all $t\in \R$, so we obtain
\begin{equation}\label{verde}
\Bigl(\frac{1}{2}- \frac{1}{\mu} \Bigr) \|v_{\varepsilon}\|^{2}_{H_{\varepsilon}} \leq
\int_{\R^{N}} (1- \chi(\varepsilon x))\underline{F}(v_{\varepsilon}) \,dx + m_{2} + \frac{1}{\mu}.
\end{equation}
On the other hand, by $(ii)$ of Lemma \ref{lemma 2.2} it follows that
$$
\underline{F}(t) \leq \frac{\nu t^{2}}{2} \quad \mbox{ for all } t \in \R. 
$$
Then
\begin{align*}
\int_{\R^{N}} (1- \chi(\varepsilon x))\underline{F}(v_{\varepsilon}) \,dx &\leq \frac{1}{2} \nu 
\|v_{\varepsilon}\|_{L^{2}(\R^{N})}^{2} \leq \frac{\nu}{2V_{0}}\|v_{\e}\|^{2}_{H^{s}_{\e}},
\end{align*}
which together with \eqref{verde} yields
\begin{align*}
\Bigl(\frac{1}{2}- \frac{1}{\mu}\Bigr) \|v_{\varepsilon}\|^{2}_{H^{s}_{\varepsilon}} \leq \frac{\nu}{2V_{0}}
\|v_{\varepsilon}\|^{2}_{H^{s}_{\varepsilon}}+ m_{2} + \frac{1}{\mu}. 
\end{align*}
In view of \eqref{2.13} we get
$$
\|v_{\varepsilon}\|^{2}_{H^{s}_{\varepsilon}}\leq \frac{m_{2}+\frac{1}{\mu}}{\Bigl[\Bigl(
\frac{1}{2}- \frac{1}{\mu}\Bigr)- \frac{\nu}{2V_{0}}\Bigr]},
$$
which implies that $\|v_{\varepsilon}\|_{H^{s}_{\varepsilon}}$ is bounded if $\varepsilon$ is small enough.

\noindent
Now, let us suppose that $(f5)$ holds. Arguing by contradiction, we assume that
$$
\limsup_{\varepsilon \rightarrow 0} \|v_{\varepsilon}\|_{H^{s}_{\varepsilon}}= \infty.
$$
Let $\varepsilon_{j}\rightarrow 0$ be a subsequence such that $\|v_{\varepsilon_{j}}\|_{H^{s}_{\varepsilon_{j}}} 
\rightarrow \infty$. For simplicity, we denote $\varepsilon_{j}$ still by $\varepsilon$. \\
Set
$\displaystyle{w_{\varepsilon}= \frac{v_{\varepsilon}}{\|v_{\varepsilon}\|_{H^{s}_{\varepsilon}}}}$. 
Clearly $\|w_{\varepsilon}\|_{H^{s}} = \frac{\|v_{\varepsilon}\|_{H^{s}}}{\|v_{\varepsilon}\|_{H^{s}_{\varepsilon}}}
\leq \frac{\|v_{\varepsilon}\|_{H^{s}_{\varepsilon}}}{\|v_{\varepsilon}\|_{H^{s}_{\varepsilon}}} =1$.
Moreover, we can see that there exists $C_{1}>0$ independent of $\varepsilon$ such that 
\begin{equation}\label{3.8}
\|\chi_{\varepsilon} w_{\varepsilon}\|_{H^{s}}\leq C_{1},
\end{equation}
where $\chi_{\e}(x)=\chi(\e x)$.\\
Indeed, using $0\leq \chi\leq 1$, $(|a|+|b|)^{2}\leq 2(|a|^{2}+|b|^{2})$, $\e\in (0, \e_{1}]$ and $s\in (0, 1)$, we get
\begin{align*}
&\iint_{\R^{2N}} \frac{|\chi(\e x) w_{\e}(x)-\chi(\e y) w_{\e}(y)|^{2}}{|x-y|^{N+2s}} \, dx dy+\int_{\R^{N}} V_{0} (\chi_{\varepsilon} w_{\varepsilon})^{2} \, dx \\
&\leq 2\iint_{\R^{2N}} \frac{|\chi(\e x)-\chi(\e y)|^{2}}{|x-y|^{N+2s}} w^{2}_{\e}(x)\, dx dy+2\iint_{\R^{N}} \frac{|w_{\e}(x)-w_{\e}(y)|^{2}}{|x-y|^{N+2s}} \, dx dy\\
&\quad +\int_{\R^{N}} V_{0} w^{2}_{\varepsilon} \, dx \\
&\leq 2\e^{2} \|\nabla \chi\|^{2}_{L^{\infty}(\R^{N})} \int_{\R^{N}} w^{2}_{\e}(x) \,dx \int_{|z|\leq 1} \frac{1}{|z|^{N+2s-2}} \, dz\\
&\quad+8 \int_{\R^{N}} w^{2}_{\e}(x) \,dx \int_{|z|>1} \frac{1}{|z|^{N+2s}} \, dz +2 \iint_{\R^{N}} \frac{|w_{\e}(x)-w_{\e}(y)|^{2}}{|x-y|^{N+2s}} \, dx dy\\
&\quad +\int_{\R^{N}} V_{0} w^{2}_{\varepsilon} \, dx \\
&\leq \left((1-s)^{-1}\e^{2}_{1} \|\nabla \chi\|^{2}_{L^{\infty}(\R^{N})}\alpha_{N-1}+4s^{-1}\alpha_{N-1}+V_{0} \right) \|w_{\e}\|^{2}_{L^{2}(\R^{N})}+2[w_{\e}]^{2} \\
&\leq C_{1}\|w_{\varepsilon}\|^{2}_{H^{s}}\leq C_{1}, 
\end{align*}
where $\alpha_{N-1}$ denotes the Lebesgue measure of the unit sphere in $\R^{N}$. \\
Now, (\ref{3.5}) implies that $\langle J'_{\varepsilon}(v_{\varepsilon}), \varphi \rangle =o(1)$  for any 
$\varphi\in H^{s}_{\varepsilon}$, that is
\begin{align*}
\int_{\R^{N}} &(-\Delta)^{\frac{s}{2}} v_{\varepsilon} (-\Delta)^{\frac{s}{2}}\varphi + V(\varepsilon x) v_{\varepsilon}\varphi  \, dx \\
&=\int_{\R^{N}} [\chi_{\varepsilon} f(v_{\varepsilon}) \varphi+(1- \chi_{\varepsilon})\underline{f}(v_{\varepsilon})]\varphi dx 
+o(1),
\end{align*}
or equivalently
\begin{align}\label{3.9}
\int_{\R^{N}} &(-\Delta)^{\frac{s}{2}} w_{\varepsilon} (-\Delta)^{\frac{s}{2}}\varphi + V(\varepsilon x) w_{\varepsilon} \varphi\, dx \nonumber \\ 
&=\int_{\R^{N}}\left[\chi_{\varepsilon}\frac{f(v_{\varepsilon})}{v_{\varepsilon}} w_{\varepsilon}+ 
(1- \chi_{\varepsilon})\frac{\underline{f}(v_{\varepsilon})}{v_{\varepsilon}}w_{\varepsilon}\right] \varphi  dx+o(1).
\end{align}
Taking $\varphi= w_{\varepsilon}^{-}=\min\{w_{\e}, 0\}$ in (\ref{3.9}) and recalling that 
$$
(x-y)(x^{-}-y^{-})\geq |x^{-}-y^{-}|^{2} \mbox{ for any } x, y\in \R,
$$ 
and that $f(t)=\underline{f}(t)=0$ for all $t\leq 0$, 
we have 
\begin{align*}
\int_{\R^{N}} &|(-\Delta)^{\frac{s}{2}} w^{-}_{\varepsilon}|^{s}+ V(\varepsilon x) (w^{-}_{\varepsilon})^{2}  \, dx \nonumber \\
&\leq  \int_{\R^{N}} \left[\chi_{\varepsilon}\frac{f(v_{\varepsilon})}{v_{\varepsilon}} w_{\varepsilon}+ 
(1- \chi_{\varepsilon})\frac{\underline{f}(v_{\varepsilon})}{v_{\varepsilon}}w_{\varepsilon}\right] w_{\varepsilon}^{-} dx
+ o(1)=o(1),
\end{align*}
so we get
\begin{equation}\label{3.10}
\|w_{\varepsilon}^{-}\|^{2}_{H^{s}_{\varepsilon}} \rightarrow 0 \mbox{ as } 
\varepsilon\rightarrow 0.
\end{equation}

Now, we can observe that one of the following two cases must occur.
\begin{compactenum}
\item [Case $1$:]  $\limsup_{\varepsilon\rightarrow 0} \Bigl(\sup_{z\in \R^{N}} \int_{B_{1}(z)} 
|\chi_{\varepsilon}(x) w_{\varepsilon}|^{2} dx \Bigr)>0$; 
\item [Case $2$:]  $\limsup_{\varepsilon\rightarrow 0} \Bigl(\sup_{z\in \R^{N}} \int_{B_{1}(z)}
|\chi_{\varepsilon}(x)w_{\varepsilon}|^{2} dx\Bigr)=0$.
\end{compactenum}

\begin{description}
\item [Step$1$] Case $1$ can not occur under assumption $(f5)$ with $a=\infty$.  
\end{description}

\noindent
We argue by contradiction, and we suppose that Case $1$ occurs. Then, up to a subsequence, there exist $(x_{\varepsilon})\subset \R^{N}$, $d>0$ and $x_{0}\in \overline{\Lambda}$ such that
\begin{align}
&\int_{B_{1}(x_{\varepsilon})} |\chi_{\varepsilon}w_{\varepsilon}|^{2} dx 
\rightarrow d >0, \label{3.11} \\ 
&\varepsilon x_{\varepsilon} \rightarrow x_{0} \in \overline{\Lambda}. \label{3.12}
\end{align}
Indeed, the existence of $(y_{\e})$ satisfying \eqref{3.11} is clear.
Moreover, \eqref{3.11} implies that 
$B_{1}(x_{\varepsilon})\cap \supp(\chi_{\varepsilon})\neq \emptyset$, so there exists $z_{\varepsilon}\in \supp(\chi_{\varepsilon})$ such that
$\chi(\varepsilon z_{\varepsilon})\neq 0$ and $|z_{\varepsilon}-x_{\varepsilon}|<1$. 
Hence 
$|\varepsilon x_{\varepsilon} - \varepsilon z_{\varepsilon}|< \varepsilon$ yields 
$\varepsilon x_{\varepsilon} \in N_{\varepsilon}(\Lambda)=\{z\in \R^{N} : {\rm dist}(z, \Lambda) < \varepsilon\}$,
and we may assume that \eqref{3.12} holds.
Since $\|w_{\varepsilon}\|_{H^{s}} \leq 1$, we may suppose that
\begin{equation}\label{3.13}
w_{\varepsilon}(\cdot+ x_{\varepsilon})\rightharpoonup w_{0} \mbox{ in } H^{s}(\R^{N}).
\end{equation}
Taking into account (\ref{3.12}) and (\ref{3.13}) we have 
\begin{align*}
(\chi_{\varepsilon} w_{\varepsilon})(\cdot+x_{\varepsilon}) \rightharpoonup \chi(x_{0})w_{0} \mbox{ in } H^{s}(\R^{N}).
\end{align*}
To prove this, fix $\varphi\in H^{s}(\R^{N})$, and we note that
\begin{align*}
\iint_{\R^{2N}} &\frac{(\chi_{\e}w_{\e})(x+x_{\e})-(\chi_{\e}w_{\e})(y+x_{\e})}{|x-y|^{N+2s}}(\varphi(x)-\varphi(y))\, dx dy\\
&=\iint_{\R^{2N}} \frac{(h_{\e}w_{\e})(x+x_{\e})-(h_{\e}w_{\e})(y+x_{\e})}{|x-y|^{N+2s}}(\varphi(x)-\varphi(y))\, dx dy\\
&\quad +\iint_{\R^{2N}} \frac{(w_{\e}(x+x_{\e})-w_{\e}(y+x_{\e}))}{|x-y|^{N+2s}}(\varphi(x)-\varphi(y))\chi(x_{0})\, dx dy \\
&=A_{\e}+B_{\e},
\end{align*}
where $h_{\e}(x)=\chi_{\e}(x)-\chi(x_{0})$.
In view of (\ref{3.13}) we know that
\begin{align*}
B_{\e}\rightarrow \iint_{\R^{2N}} \frac{(w_{0}(x)-w_{0}(y))}{|x-y|^{N+2s}}(\varphi(x)-\varphi(y))\chi(x_{0})\, dx dy.
\end{align*}
Now, we observe that 
\begin{align*}
A_{\e}&=\iint_{\R^{2N}} \frac{(h_{\e}(x+x_{\e})-h_{\e}(y+x_{\e}))}{|x-y|^{N+2s}}(\varphi(x)-\varphi(y))w_{\e}(x+x_{\e})\, dx dy\\
&+ \iint_{\R^{2N}} \frac{(w_{\e}(x+x_{\e})-w_{\e}(y+x_{\e}))}{|x-y|^{N+2s}}(\varphi(x)-\varphi(y))h_{\e}(y+x_{\e})\, dx dy\\
&=A_{\e}^{1}+A_{\e}^{2}.
\end{align*}
Using H\"older's inequality, \eqref{3.12}, \eqref{3.13} and the dominated convergence theorem, we can see that
\begin{align*}
|A_{\e}^{2}|\leq C\left(\iint_{\R^{2N}} \frac{|\varphi(x)-\varphi(y)|^{2}}{|x-y|^{N+2s}}|h_{\e}(y+x_{\e})|^{2}\, dx dy  \right)^{\frac{1}{2}}\rightarrow 0.
\end{align*}
On the other hand
\begin{align*}
|A_{\e}^{1}|\leq [\varphi]\left(\iint_{\R^{2N}} \frac{|h_{\e}(x+x_{\e})-h_{\e}(y+x_{\e})|^{2}}{|x-y|^{N+2s}}|w_{\e}(x+x_{\e})|^{2}\, dx dy  \right)^{\frac{1}{2}}\rightarrow 0
\end{align*}
because
\begin{align*}
&\iint_{\R^{2N}} \frac{|h_{\e}(x+x_{\e})-h_{\e}(y+x_{\e})|^{2}}{|x-y|^{N+2s}}|w_{\e}(x+x_{\e})|^{2}\, dx dy \\
&\leq \int_{\R^{N}} |w_{\e}(x+x_{\e})|^{2}\, dx\left[ \int_{|y-x|>\frac{1}{\e}} \frac{4dy}{|x-y|^{N+2s}}+\int_{|y-x|<\frac{1}{\e}} \frac{\e^{2}\|\nabla \chi\|^{2}_{L^{\infty}(\R^{N})} dy}{|x-y|^{N+2s-2}}\right] \\
&\leq C\e^{2s} \int_{\R^{N}} |w_{\e}(x+x_{\e})|^{2}\, dx\leq C\e^{2s}\rightarrow 0.
\end{align*}

Now, let us show that $\chi(x_{0})\neq 0$ and $w_{0}\geq 0$ ($\not\equiv 0$).
If by contradiction $\chi(x_{0})=0$, by the dominated convergence theorem, (\ref{3.11}), \eqref{3.13} and Theorem \ref{Sembedding} we obtain 
\begin{align*}
0<d&= \lim_{\varepsilon\rightarrow 0} \int_{B_{1}(x_{\varepsilon})} |\chi_{\varepsilon}
w_{\varepsilon}|^{2} dx\\
&= \lim_{\varepsilon\rightarrow 0} \int_{B_{1}} |\chi_{\varepsilon} w_{\varepsilon}|^{2} 
(x+x_{\varepsilon}) \,dx \\
&= \int_{B_{1}} |\chi(x_{0})w_{0}(x)|^{2} dx =0, 
\end{align*}
which is impossible.
For the same reason $w_{0}\not\equiv 0$.
Using (\ref{3.10}) and (\ref{3.13}) we can see that  $w_{0} \geq 0$ in $\R^{N}$. 
Thus, there exists a set $K \subset \R^{N}$ such that
\begin{align}
&|K|>0 \label{3.14} \\
&w_{\varepsilon}(x+x_{\varepsilon}) \rightarrow w_{0}(x)>0 \quad \mbox{ for } x\in K. \label{3.15}
\end{align} 
Taking $\varphi= w_{\varepsilon}$ in (\ref{3.9}), we get
\begin{align*}
1=\|w_{\varepsilon}\|_{H^{s}_{\varepsilon}}^{2} = \int_{\R^{N}} \chi_{\varepsilon} \frac{f(v_{\varepsilon})}{v_{\varepsilon}} w_{\varepsilon}^{2} +
(1- \chi_{\varepsilon}) \frac{\underline{f}(v_{\varepsilon})}{v_{\varepsilon}} w_{\varepsilon}^{2} 
dx + o(1),
\end{align*}
and using $(iv)$ of Lemma \ref{lemma 2.2}, we deduce that
\begin{equation}\label{3.16}
\limsup_{\varepsilon \rightarrow 0} \int_{\R^{N}} \chi_{\varepsilon} 
\frac{f(v_{\varepsilon})}{v_{\varepsilon}} w_{\varepsilon}^{2} \, dx \leq 1, 
\end{equation}
that is
$$
\limsup_{\varepsilon \rightarrow 0} \int_{\R^{N}} \chi(\varepsilon x + 
\varepsilon x_{\varepsilon})\frac{f(v_{\varepsilon}(x+x_{\varepsilon}))}{v_{\varepsilon}
(x+x_{\varepsilon})}w_{\varepsilon}(x+x_{\varepsilon})^{2} \, dx \leq 1.
$$
In view of (\ref{3.14}), (\ref{3.15}) and the definition of $w_{\varepsilon}$, we obtain 
$$
v_{\varepsilon}(x+x_{\varepsilon})= w_{\varepsilon}(x+x_{\varepsilon})
\|v_{\varepsilon}\|_{H^{s}_{\varepsilon}} \rightarrow w_{0}(x)\cdot (\infty) = 
\infty \quad \forall x\in K.
$$
This, together with $\lim_{\xi\rightarrow \infty} \frac{f(\xi)}{\xi}=a= \infty$ and Fatou's Lemma yields 
\begin{align*}
\liminf_{\varepsilon \rightarrow 0} &\int_{\R^{N}} \chi_{\varepsilon}(x+x_{\varepsilon})\frac{f(v_{\varepsilon}(x+x_{\varepsilon}))}
{w_{\varepsilon}(x+x_{\varepsilon})} w_{\varepsilon}(x+x_{\varepsilon})^{2} dx \\
&\geq \liminf_{\varepsilon \rightarrow 0}\int_{K} \chi_{\varepsilon}(x+x_{\varepsilon})\frac{f(v_{\varepsilon}(x+x_{\varepsilon}))}
{v_{\varepsilon}(x+x_{\varepsilon})}w_{\varepsilon}(x+x_{\varepsilon})^{2} \, dx= \infty
\end{align*}
which contradicts (\ref{3.16}).

\begin{description}
\item [Step $2$] Case $1$ can not take place under assumption $(f5)$ with $a<\infty$. 
\end{description}

\noindent
As in Step $1$, we extract a  subsequence and we assume that (\ref{3.11}),(\ref{3.12}) and 
(\ref{3.13}) hold with $\chi(x_{0})\neq 0$ and $w_{0}\geq 0$ $(\not\equiv 0)$. We aim to prove that $w_{0}$ is a weak solution to
\begin{equation}\label{3.17}
(-\Delta)^{s} w_{0} + V(x_{0})w_{0}= (\chi(x_{0})a + (1-\chi(x_{0}))\nu)w_{0} \mbox{ in } \R^{N}.
\end{equation}
This provides a contradiction since $(-\Delta)^{s}$ has no eigenvalues in $H^{s}(\R^{N})$ (this fact can be seen by using the Pohozaev Identity for the fractional Laplacian \cite{A1, CW, Secchi2}). \\
Fix $\varphi \in C^{\infty}_{0}(\R^{N})$. Taking into account (\ref{3.12}), (\ref{3.13}) and the continuity of $V$, we can see that
\begin{align}\label{3.18}
&\int_{\R^{N}}  (-\Delta)^{\frac{s}{2}}w_{\varepsilon}(x+x_{\varepsilon})(-\Delta)^{\frac{s}{2}}\varphi(x) + 
V(\varepsilon x+ \varepsilon x_{\varepsilon})w_{\varepsilon} \varphi \, dx \nonumber \\
&\rightarrow \int_{\R^{N}} (-\Delta)^{\frac{s}{2}} w_{0} (-\Delta)^{\frac{s}{2}} \varphi +V(x_{0})w_{0}\varphi \, dx.
\end{align}
Now, we show that
\begin{align}\label{3.19}
\int_{\R^{N}} \frac{g(\varepsilon x+\varepsilon x_{\varepsilon}, v_{\varepsilon}(x+x_{\varepsilon}))}
{v_{\varepsilon}(x+x_{\varepsilon})} w_{\varepsilon}\varphi \,dx
\rightarrow (\chi(x_{0})a + (1-\chi(x_{0}))\nu)\int_{\R^{N}} w_{0} \varphi\, dx. 
\end{align}

Take $R>1$ such that $\supp \varphi \subset B_{R}$.  Then, using the fact that $H^{s}(\R^{N})$ is compactly embedded into $L^{2}_{loc}(\R^{N})$, we get
$\|w_{\varepsilon}- w_{0}\|^{2}_{L^{2}(B_{R})} \rightarrow 0$ .
Hence, there exists $h\in L^{2}(B_{R})$ such that 
\begin{align*}
|w_{\varepsilon}|\leq h \mbox{ a.e. in } B_{R}.
\end{align*}
Since $a<\infty$, there exists $C>0$ such that $|\frac{g(x, t)}{t}|\leq C$ for any
$t>0$. We recall that 
$$
\frac{g(x, t)}{t} \rightarrow \chi(x)a+(1-\chi(x))\nu <\infty \mbox{ as } t
\rightarrow \infty.
$$

Then
\begin{align}\label{3.20}
\Bigl|\frac{g(\varepsilon x+ \varepsilon x_{\varepsilon}, v_{\varepsilon}(x+x_{\varepsilon}))}
{v_{\varepsilon}(x+x_{\varepsilon})} w_{\varepsilon}\varphi\Bigr| &\leq C \|\varphi\|_{L^{\infty}(\R^{N})} 
|w_{\varepsilon}| \nonumber \\
&\leq C \|\varphi\|_{L^{\infty}(\R^{N})} h \in L^{1}(B_{R}),
\end{align}
and
\begin{align}\label{3.21}
\frac{g(\varepsilon x+ \varepsilon x_{\varepsilon}, v_{\varepsilon}(x+x_{\varepsilon}))}
{v_{\varepsilon}(x+x_{\varepsilon})} w_{\varepsilon}(x) 
\rightarrow [\chi(x_{0}) a + (1-\chi(x_{0}))\nu]w_{0}(x) \mbox{ a.e. in } B_{R}.
\end{align}
In fact, if $w_{0}(x)=0$, being $|\frac{g(x,t)}{t}|\leq C$  
for all $t>0$ and $w_{\varepsilon}\rightarrow w_{0}=0$ a.e. in $B_{R}$,  we get
$$
\Bigl| \frac{g(\varepsilon x+ \varepsilon x_{\varepsilon}, v_{\varepsilon}(x+x_{\varepsilon}))}
{v_{\varepsilon}(x+x_{\varepsilon})} w_{\varepsilon} \Bigr|\leq C |w_{\varepsilon}| \rightarrow 0 \, \mbox{ a.e. in } B_{R}.
$$ 
If $w_{0}(x)\neq 0$ then 
$v_{\varepsilon}(x+x_{\varepsilon})= w_{\varepsilon}(x+x_{\varepsilon}) \|v_{\varepsilon}\|
_{H^{s}_{\varepsilon}}\rightarrow \infty$ and being $w_{\varepsilon}\rightarrow w_{0}$ a.e. in $B_{R}$
we have 
$$
\frac{g(\varepsilon x+ \varepsilon x_{\varepsilon}, v_{\varepsilon}(x+x_{\varepsilon}))}
{v_{\varepsilon}(x+x_{\varepsilon})}w_{\e}\rightarrow [\chi(x_{0})a + (1- \chi(x_{0}))\nu]w_{0} \, \mbox{ a.e. in } B_{R}.
$$
Then (\ref{3.21}) holds. Taking into account (\ref{3.20}) and
(\ref{3.21}), we can infer that (\ref{3.19}) is true in view of the dominated convergence theorem.
Putting together $\langle J'_{\e}(v_{\e}), \varphi\rangle=o(1)$, \eqref{3.18} and \eqref{3.19} we obtain \eqref{3.17}.

\begin{description}
\item [Step $3$] Case $2$ can not take place. 
\end{description}
Assume by contradiction that Case $2$ occurs. Since  (\ref{3.8}) holds and  
$$
\lim_{\varepsilon\rightarrow 0} \sup_{z\in  \R^{N}} \int_{B_{1}(z)}|\chi_{\varepsilon}
w_{\varepsilon}|^{2} \, dx =0,
$$
by Lemma \ref{lionslemma} we deduce that $\|\chi_{\varepsilon}w_{\varepsilon}\|_{L^{p+1}(\R^{N})} \rightarrow 0$.\\
Now, for any $L>1$ we can see that
$$
J_{\varepsilon}(Lw_{\e})=
\frac{1}{2} L^{2}-\int_{\R^{N}} \chi_{\varepsilon} F(Lw_{\varepsilon})+ (1- \chi_{\varepsilon})
\underline{F}(Lw_{\varepsilon}) \, dx.
$$
By $(ii)$ of Lemma \ref{lemma 2.2} and $\nu\in (0, \frac{V_{0}}{2})$ we have 
\begin{align*} 
\int_{\R^{N}}(1- \chi_{\varepsilon})\underline{F}(Lw_{\varepsilon}) \, dx &\leq\int_{\R^{N}} \frac{1}{2} \nu L^{2} |w_{\varepsilon}|^{2} \, dx  \\
&\leq \int_{\R^{N}} \frac{V_{0}}{4} L^{2} |w_{\varepsilon}|^{2} \, dx \\
&\leq \frac{L^{2}}{4} \|w_{\varepsilon}\|_{H^{s}} \leq \frac{L^{2}}{4}.
\end{align*}
Accordingly, 
\begin{equation}\label{3.22}
J_{\varepsilon}(Lw_{\varepsilon})\geq \frac{1}{4}L^{2} - \int_{\R^{N}}\chi_{\varepsilon}
F(Lw_{\varepsilon}) \, dx.
\end{equation}
Using (\ref{2.7}), H\"older's inequality and $\|\chi_{\varepsilon}w_{\varepsilon}\|_{L^{p+1}(\R^{N})}\rightarrow 0$, we obtain
\begin{align}\label{3.23}
\int_{\R^{N}} \chi_{\varepsilon}F(Lw_{\varepsilon}) \, dx &\leq \int_{\R^{N}} \Bigl[\frac{\delta}{2} L^{2} |w_{\varepsilon}|^{2} + C_{\delta} 
\frac{|Lw_{\varepsilon}|^{p+1}}{p+1} \chi_{\varepsilon}(x) \Bigr] dx \nonumber\\
& \leq \delta L^{2} \|w_{\varepsilon}\|_{L^{2}(\R^{N})}^{2} + C_{\delta}L^{p+1} \|w_{\varepsilon}\|_{L^{p+1}(\R^{N})}^{p}
\|\chi_{\varepsilon}w_{\varepsilon}\|_{L^{p+1}(\R^{N})} \nonumber\\
& \leq \frac{\delta L^{2}}{V_{0}^{2}} \|w_{\varepsilon}\|^{2}_{H^{s}_{\varepsilon}} + o(1).
\end{align}
Putting together (\ref{3.22}) and (\ref{3.23}) we have
\begin{align*}
J_{\varepsilon}(Lw_{\varepsilon})\geq \frac{1}{4} L^{2} - \frac{\delta L^{2}}{V_{0}^{2}}
\|w_{\varepsilon}\|^{2}_{H^{s}_{\varepsilon}}+o(1) \quad \forall \delta >0 ,
\end{align*}
and by the arbitrariness of $\delta>0$, we get
$$
\liminf_{\varepsilon\rightarrow 0} J_{\varepsilon}(Lw_{\varepsilon})\geq \frac{1}{4}L^{2}.
$$
Since $\|v_{\varepsilon}\|_{H^{s}_{\varepsilon}}\rightarrow \infty$, we can see that $\frac{L}{\|v_{\varepsilon}\|_{H^{s}_{\varepsilon}}}\in (0,1)$ for $\varepsilon$ sufficiently small, and we deduce
$$
\max_{t \in [0,1]} J_{\varepsilon}(t v_{\varepsilon})\geq J_{\varepsilon}
\Bigl(\frac{L}{\|v_{\varepsilon}\|_{H^{s}_{\e}}}v_{\varepsilon} \Bigr)\geq \frac{1}{4}L^{2}.
$$
Take $L>0$ sufficiently large such that $m_{2} <\frac{1}{4}L^{2}$ and we recall that $J_{\varepsilon}(v_{\varepsilon})\leq m_{2}$ by \eqref{3.4}. 
Then we can find $t_{\varepsilon} \in (0,1)$ such that 
$$
J_{\varepsilon}(t_{\varepsilon}v_{\varepsilon})= \max_{t\in [0,1]} J_{\varepsilon}(tv_{\varepsilon}).
$$ 
Hence 
$$
J_{\varepsilon}(t_{\varepsilon}v_{\varepsilon}) = \max_{t \in [0,1]} J_{\varepsilon}(t v_{\varepsilon})
\geq \frac{1}{4} L^{2} \rightarrow \infty \mbox{ as } L\rightarrow \infty,
$$ 
that is
\begin{equation}\label{3.24}
J_{\varepsilon}(t_{\varepsilon}v_{\varepsilon})\rightarrow \infty \mbox{ as } \varepsilon\rightarrow 0.
\end{equation} 
Now, using $\langle J'_{\varepsilon}(t_{\varepsilon}v_{\varepsilon}), (t_{\varepsilon}v_{\varepsilon})\rangle=0$, (\ref{3.5}) and Corollary \ref{corollario 2.1}-$(iv)$, we can see that 
\begin{align}\label{3.25}
J_{\varepsilon}(t_{\varepsilon}v_{\varepsilon}) &= J_{\varepsilon}(t_{\varepsilon}v_{\varepsilon})-
\frac{1}{2} \langle J'_{\varepsilon}(t_{\varepsilon}v_{\varepsilon}), (t_{\varepsilon}v_{\varepsilon})\rangle \nonumber\\
&= \int_{\R^{N}} \hat{G}(\varepsilon x, t_{\varepsilon}v_{\varepsilon})\, dx \nonumber\\
&\leq D^{k_{\nu}} \int_{\R^{N}} \hat{G}(\varepsilon x, v_{\varepsilon}) \, dx \nonumber\\
&= D^{k_{\nu}} \left(J_{\varepsilon}(v_{\varepsilon})-\frac{1}{2} \langle J'_{\varepsilon}(v_{\varepsilon}), v_{\varepsilon}\rangle\right) \nonumber\\
&\leq D^{k_{\nu}} m_{2} +o(1)
\end{align}
which contradicts (\ref{3.24}).
Then the Case $2$ can not take place. 

\begin{description}
\item [Step $4$] Conclusion.
\end{description}

Putting together Step $1$, Step $2$ and Step $3$,  we can deduce that $\|v_{\varepsilon}\|_{H^{s}_{\varepsilon}}$ is bounded as $\varepsilon\rightarrow 0$.
\end{proof}

\noindent
In the next lemma we prove that every Cerami sequence $(v_{j})\subset H_{\e}^{s}$ at level $c_{\e}$ is bounded and admits a convergent subsequence in $H_{\e}^{s}$.   
\begin{lem}\label{proposizione 3.1}
Assume that $f$ satisfies $(f1)$-$(f3)$ and either $(f4)$ or $(f5)$. Then there exists $\varepsilon_{1} \in (0, \varepsilon_{0}]$ such that for any $\varepsilon \in (0, \varepsilon_{1}]$ and for any $(v_{j})\subset H^{s}_{\varepsilon}$ satisfying 
\begin{align}
&J_{\varepsilon}(v_{j})\rightarrow c>0, \label{3.2} \\
&(1+ \|v_{j}\|_{H^{s}_{\varepsilon}})\|J'_{\varepsilon}(v_{j})\|_{H^{-s}_{\varepsilon}}\rightarrow 0 \label{3.3}
\mbox{ as } j\rightarrow \infty,
\end{align}
for some $c>0$, we get
\begin{compactenum}[(i)]
\item $\|v_{j}\|_{H^{s}_{\varepsilon}}$ is bounded as $j\rightarrow \infty$;
\item  there exists $(j_{k})$ and $v_{0} \in H^{s}_{\varepsilon}$ such that $v_{j_{k}} 
\rightarrow v_{0}$ strongly in $H^{s}_{\varepsilon}$. 
\end{compactenum}
\end{lem}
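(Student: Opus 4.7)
The plan is to prove $(i)$ first, following the strategy of Lemma \ref{proposizione 3.2} with $\varepsilon$ now held fixed and $j\to\infty$ playing the role of $\varepsilon\to 0$ there, and then in $(ii)$ to upgrade a weak limit to a strong limit by combining a tail estimate from the penalization with the local compactness of Theorem \ref{Sembedding}.

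For $(i)$ under $(f4)$, I would form $J_{\varepsilon}(v_{j})-\tfrac{1}{\mu}\langle J'_{\varepsilon}(v_{j}),v_{j}\rangle$, keep the nonpositive term $\chi_{\varepsilon}(F(v_{j})-\tfrac{1}{\mu}f(v_{j})v_{j})\leq 0$ coming from Ambrosetti-Rabinowitz, drop the nonnegative contribution $-\tfrac{1}{\mu}(1-\chi_{\varepsilon})\underline{f}(v_{j})v_{j}$ via Lemma \ref{lemma 2.2}-$(iv)$, and bound $(1-\chi_{\varepsilon})\underline{F}(v_{j})\leq \tfrac{\nu}{2}v_{j}^{2}$ via Lemma \ref{lemma 2.2}-$(ii)$. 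Using \eqref{3.2}--\eqref{3.3} on the left and \eqref{2.13} on the right, the coefficient $\bigl(\tfrac{1}{2}-\tfrac{1}{\mu}\bigr)-\tfrac{\nu}{2V_{0}}$ of $\|v_{j}\|^{2}_{H^{s}_{\varepsilon}}$ is positive and any $o(1)(1+\|v_{j}\|_{H^{s}_{\varepsilon}})$ term is absorbed, yielding boundedness. Under $(f5)$ I would argue by contradiction: assuming $\|v_{j}\|_{H^{s}_{\varepsilon}}\to\infty$, set $w_{j}=v_{j}/\|v_{j}\|_{H^{s}_{\varepsilon}}$, prove $\|w_{j}^{-}\|_{H^{s}_{\varepsilon}}\to 0$ exactly as in \eqref{3.10}, and split into the two cases of Lemma \ref{proposizione 3.2}. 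In the non-vanishing case for $\chi_{\varepsilon}w_{j}$, since $\mathrm{supp}(\chi_{\varepsilon})\subset \Lambda/\varepsilon$ is bounded for fixed $\varepsilon$ the concentration points lie in a bounded region, so up to translation $w_{j}(\cdot+x_{j})\rightharpoonup w_{0}\not\equiv 0$ in $H^{s}(\R^{N})$; if $a=\infty$, Fatou contradicts $\int\chi_{\varepsilon}\tfrac{f(v_{j})}{v_{j}}w_{j}^{2}\,dx\leq 1+o(1)$, while if $a<\infty$ the limit $w_{0}$ would solve a linear constant-coefficient fractional equation with no nontrivial $H^{s}(\R^{N})$ solution by the Pohozaev identity. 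In the vanishing case, Lemma \ref{lionslemma} yields $\|\chi_{\varepsilon}w_{j}\|_{L^{p+1}(\R^{N})}\to 0$, and choosing $t_{j}\in(0,1)$ maximizing $t\mapsto J_{\varepsilon}(tv_{j})$ forces $J_{\varepsilon}(t_{j}v_{j})\to\infty$, which by Corollary \ref{corollario 2.1}-$(iv)$ contradicts $J_{\varepsilon}(v_{j})\to c$.

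For $(ii)$, using $(i)$ extract $v_{j_{k}}\rightharpoonup v_{0}$ in $H^{s}_{\varepsilon}$ (with pointwise convergence a.e.) and pass to the limit in $\langle J'_{\varepsilon}(v_{j_{k}}),\varphi\rangle=o(1)$ for $\varphi\in C^{\infty}_{0}(\R^{N})$, handling the nonlinear term via Lemma \ref{Strauss} and Corollary \ref{corollario 2.1}-$(iii)$, to obtain $J'_{\varepsilon}(v_{0})=0$. I would then write
\[
\|v_{j_{k}}-v_{0}\|^{2}_{H^{s}_{\varepsilon}} = \langle J'_{\varepsilon}(v_{j_{k}})-J'_{\varepsilon}(v_{0}), v_{j_{k}}-v_{0}\rangle + \int_{\R^{N}}\bigl(g(\varepsilon x, v_{j_{k}})-g(\varepsilon x, v_{0})\bigr)(v_{j_{k}}-v_{0})\,dx,
\]
where the first term is $o(1)$ by \eqref{3.3}. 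Splitting $\R^{N}=B_{R}\cup B_{R}^{c}$ with $R$ so large that $\Lambda/\varepsilon\subset B_{R}$, the integral over $B_{R}$ tends to zero by Theorem \ref{Sembedding} together with Lemma \ref{Strauss}; on $B_{R}^{c}$, $\chi_{\varepsilon}\equiv 0$, so $g(\varepsilon x, t)=\underline{f}(t)\leq \nu t$, and Cauchy-Schwarz combined with a uniform $L^{2}$-tail bound on $v_{j_{k}}$ controls the rest.

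The crucial step, and the main obstacle, is this uniform tail bound, because the nonlocal operator does not commute with cutoffs. I would test $J'_{\varepsilon}(v_{j_{k}})$ against $\eta_{R}v_{j_{k}}$, with $\eta_{R}$ as in Lemma \ref{funlemma} and $R$ such that $\Lambda/\varepsilon\subset B_{R}$, and exploit the decomposition
\[
\int_{\R^{N}}(-\Delta)^{s/2}v_{j_{k}}(-\Delta)^{s/2}(\eta_{R}v_{j_{k}})\,dx = \int_{\R^{N}}\eta_{R}\,|(-\Delta)^{s/2}v_{j_{k}}|^{2}\,dx + \mathcal{R}(R,k),
\]
where
\[
\mathcal{R}(R,k) = \iint_{\R^{2N}} \frac{(v_{j_{k}}(x)-v_{j_{k}}(y))(\eta_{R}(x)-\eta_{R}(y))v_{j_{k}}(y)}{|x-y|^{N+2s}}\,dxdy.
\]
Cauchy-Schwarz and Lemma \ref{funlemma} give $\lim_{R\to\infty}\limsup_{k\to\infty}|\mathcal{R}(R,k)|=0$. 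Since $V(\varepsilon x)-\nu\geq V_{0}/2>0$ and $g(\varepsilon x, v_{j_{k}})\leq \nu v_{j_{k}}$ on $\mathrm{supp}(\eta_{R})$, this produces
\[
\int_{\R^{N}}\eta_{R}\Bigl(|(-\Delta)^{s/2}v_{j_{k}}|^{2} + \tfrac{V_{0}}{2}v_{j_{k}}^{2}\Bigr)dx = o_{k}(1) + o_{R}(1),
\]
from which $\|v_{j_{k}}\|_{L^{2}(B_{R}^{c})}\to 0$ in the double limit $\lim_{R\to\infty}\limsup_{k\to\infty}$. Once this nonlocal cutoff step is secured, the remainder of $(ii)$ is standard and the proof is complete.
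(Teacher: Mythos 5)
Your outline of part $(ii)$ matches the paper's proof in all essentials: a tail estimate obtained by testing $J'_{\varepsilon}$ against $\eta_{R}v_{j}$, with the nonlocal commutator $\mathcal{R}(R,k)$ controlled via Lemma~\ref{funlemma}, and then a convergence-of-norms argument to upgrade weak to strong convergence in the Hilbert space $H^{s}_{\varepsilon}$. Your treatment of $(i)$ under $(f4)$, under $(f5)$ with $a=\infty$, and in the vanishing case is also correct and essentially the paper's.

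However, there is a genuine gap in the non-vanishing case under $(f5)$ with $a<\infty$, and it is precisely the place where this lemma differs from Lemma~\ref{proposizione 3.2}. You claim that, after translating by the bounded concentration points $x_{j}$ and passing to the weak limit $w_{0}$, one obtains a \emph{constant-coefficient} linear fractional equation that can be ruled out by the Pohozaev identity. That was the mechanism in Lemma~\ref{proposizione 3.2}, where $\varepsilon\to 0$ simultaneously with $j\to\infty$, so $\chi(\varepsilon x+\varepsilon x_{\varepsilon})\to\chi(x_{0})$ and $V(\varepsilon x+\varepsilon x_{\varepsilon})\to V(x_{0})$ pointwise and the limit coefficients are constants. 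Here $\varepsilon$ is \emph{fixed} and only $j\to\infty$; since $\mathrm{supp}(\chi_{\varepsilon})$ is bounded, $x_{j}$ converges (up to a subsequence), and the limit equation one actually obtains is \eqref{3.26}, namely
$(-\Delta)^{s} w_{0} + V(\varepsilon x+x_{0})w_{0}=\bigl(\chi(\varepsilon x+x_{0})a+(1-\chi(\varepsilon x+x_{0}))\nu\bigr)w_{0}$,
whose coefficients depend nontrivially on $x$. The constant-coefficient Pohozaev argument does not apply. To dispose of this case the paper sets $\tilde w(x)=w_{0}\bigl(\tfrac{x-x_{0}}{\varepsilon}\bigr)$, which solves the $\varepsilon$-rescaled equation \eqref{3.27}, passes to the Caffarelli--Silvestre extension, and compares with a Steklov-type eigenvalue $\mu_{R}$ on a weighted half-ball $B^{+}_{R}$: testing the extension equation against the (zero-extended) eigenfunction $U_{R}$ produces the identity \eqref{new2.22}, $\int_{\Gamma^{0}_{R}}(V(x)-a+\varepsilon^{2s}\mu_{R})\tilde w\, u_{R}\,dx=0$, which is impossible once $\varepsilon$ is small because the integrand is strictly negative on $\Gamma^{0}_{R}$. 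This extension/eigenvalue comparison is the genuinely new ingredient of Lemma~\ref{proposizione 3.1}, and it is also where the threshold $\varepsilon_{1}$ in the statement comes from; your proposal as written does not supply it and cannot reach the conclusion in this case.
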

\begin{proof}
The proof of $(i)$ can be done in similar way to the one of Lemma \ref{proposizione 3.2}, after suitable modifications.  
More precisely, 
in Step $1$ of Lemma \ref{proposizione 3.2}, for a given sequence $(v_{j})$, there exists $(x_{j})\subset \R^{N}$ 
such that
$
\int_{B_{1}(x_{j})} |\chi_{\varepsilon}w_{j}|^{2} \, dx \rightarrow d>0.
$ 
The sequence $(x_{j})$ satisfies $\varepsilon x_{j} \in N_{\varepsilon}(\Lambda)$, and we may assume that $\varepsilon x_{j} \rightarrow x_{0} \in \overline
{N_{\varepsilon}(\Lambda)}$, where $x_{0}$ is such that $\chi(\varepsilon x+x_{0})\neq 0 \mbox{ in } B_{1}$.\\
In Step $2$ we replace (\ref{3.17}) by
\begin{equation}\label{3.26}
(-\Delta)^{s} w_{0} + V(\varepsilon x+ x_{0})w_{0} = (\chi(\varepsilon x + x_{0})a + (1- \chi(\varepsilon
x +x_{0}))\nu)w_{0} \mbox{ in } \R^{N}
\end{equation}
where $w_{0}\in H^{s}(\R^{N})$ is nonnegative and not identically zero. Indeed, using the maximum principle \cite{CabS}, we can see that $w_{0}>0$ in $\R^{N}$. Now we set $\tilde{w}(x)= w_{0}(\frac{x-x_{0}}{\varepsilon})$. Then $\tilde{w}$ satisfies
\begin{equation}\label{3.27}
\varepsilon^{2s} (-\Delta)^{s} \tilde{w} + V(x) \tilde{w}= (\chi(x)a + (1-\chi(x))\nu)\tilde{w} \, \mbox{ in } \R^{N}.
\end{equation}
We aim to prove that this is impossible for $\varepsilon>0$ sufficiently small.
Using the extension technique \cite{CS}, we can see that $\widetilde{W}:=Ext(\tilde{w})$ is a solution to the following problem
\begin{equation}
\left\{
\begin{array}{ll}
\varepsilon^{2s} \dive(y^{1-2s} \nabla \widetilde{W})=0 & \mbox{ in } \R^{N+1}_{+} \\
\frac{\partial \widetilde{W}}{\partial \nu^{1-2s}}=-V(x) \tilde{w}+ (\chi(x)a + (1-\chi(x))\nu)\tilde{w} & \mbox{ on } \partial \R^{N+1}_{+},  \\
\end{array}
\right.
\end{equation}
where we used the notation $w(x)=W(x, 0)$ to denote the trace of $W(x,y)$.\\
Take $r>0$ sufficiently small such that 
$$
\chi(x)=1 \mbox{ and } V(x)<a \quad \mbox{ for } x\in B_{r}.
$$ 
 
Let use introduce the following notations: 
\begin{align*}
B_{r}^{+}&=\{(x, y)\in \R^{N+1}_{+}: y>0, |(x, y)|< r\},\\
\Gamma_{r}^{+}&=\{(x, y)\in \R^{N+1}_{+}: y\geq 0, |(x, y)|= r\},\\
\Gamma_{r}^{0}&=\{(x, 0)\in \partial \R^{N+1}_{+}: |x|< r\},
\end{align*}
and
$$
H^{1}_{0, \Gamma^{+}_{r}}(B^{+}_{r})=\{V\in H^{1}(B_{r}^{+}, y^{1-2s}): V\equiv 0 \mbox{ on } \Gamma_{r}^{+}\}.
$$
Let
$$
\mu_{r}:=\inf\left\{\iint_{B^{+}_{r}} y^{1-2s} |\nabla U|^{2}\, dx dy: U\in H^{1}_{0, \Gamma^{+}_{r}}(B^{+}_{r}), \int_{\Gamma_{r}^{0}} u^{2} \, dx=1 \right\}.
$$
By the compactness of the embedding $H^{1}_{0, \Gamma_{r}^{+}}(B^{+}_{r})\Subset L^{2}(\Gamma^{0}_{r})$, it is not difficult to see that such infimum is achieved by a function $U_{r}\in H^{1}_{\Gamma_{r}^{+}}(B^{+}_{r})\setminus\{0\}$. Moreover, we may assume that  $U_{r}\geq 0$. Then $U_{r}$ is a solution, not identically zero, of 
\begin{equation}\label{new2.21}
\left\{
\begin{array}{ll}
\dive(y^{1-2s} \nabla U_{r})=0 & \mbox{ in } B^{+}_{r} \\
\frac{\partial U_{r}}{\partial \nu^{1-2s}}=\mu_{r} u_{r}  & \mbox{ on } \Gamma_{r}^{0}  \\
U_{r}=0 & \mbox{ on } \Gamma_{r}^{+}.
\end{array}
\right. 
\end{equation}
It follows from the strong maximum principle \cite{CabS} that $U_{r}>0$ on $B_{r}^{+}\cup \Gamma_{r}^{0}$. 
Let us note $\mu_{r}\geq 0$ and $\mu_{r}$ is a nonincreasing function of $r$. 
Indeed, $\mu_{r}$ is decreasing in $r$. In fact, if by contradiction we assume that $r_{1}<r_{2}$ and $\mu_{r_{1}}= \mu_{r_{2}}$, we can multiply the equation $\dive(y^{1-2s} \nabla U_{r_{1}})=0$ by $U_{r_{2}}$, and after an integration by parts, we can use the equalities satisfied by $U_{r_{1}}$ and $U_{r_{2}}$, and the assumption $\mu_{r_{1}}= \mu_{r_{2}}$, to deduce that 
$$
\int_{\Gamma_{r_{1}}^{+}} \frac{\partial U_{r_{1}}}{\partial \nu^{1-2s}} U_{r_{2}} d\sigma=0. 
$$
This gives a contradiction because of $U_{r_{2}}>0$ and $\frac{\partial U_{r_{1}}}{\partial \nu^{1-2s}}<0$ on $\Gamma_{r_{1}}^{+}$.  \\
Now we extend $U_{r}=0$ in $\R^{N+1}_{+}\setminus B_{r}^{+}$, so that $U_{r}\in H^{1}(\R^{N+1}_{+}, y^{1-2s})$.
Therefore, 
\begin{align*}
\varepsilon^{2s} \mu_{r} \int_{\Gamma^{0}_{r}} u_{r} \tilde{w} \, dx &= \iint_{B^{+}_{r}} y^{1-2s} \varepsilon^{2s} \nabla \widetilde{W} \nabla U_{r} \, dx dy \\
&= -\int_{\Gamma^{0}_{r}} (V(x)-a)  \tilde{w} u_{r} \, dx   
\end{align*}
that is
\begin{align}\label{new2.22}
\int_{\Gamma^{0}_{r}} (V(x)-a+\varepsilon^{2s} \mu_{r})  \tilde{w} u_{r} \, dx=0.
\end{align}
But this is impossible because of $V(x)-a+\mu_{r} \varepsilon^{2s}<0$ in $\Gamma^{0}_{r}$ for $\varepsilon>0$ small and $u_{r} \tilde{w}>0$ in $\Gamma^{0}_{r}$.

\noindent
In order to verify $(ii)$, we fix $\varepsilon\in (0, \varepsilon_{1}]$ and $(v_{j})$ satisfying 
 (\ref{3.2}) and (\ref{3.3}). Using $(i)$, we can see that $(v_{j})$ is bounded in $H^{s}_{\varepsilon}$. 
Up to a subsequence, we may assume that  
$$
v_{j} \rightharpoonup v_{0} \mbox{ in } H^{s}_{\varepsilon}.
$$ 
Our claim is to prove that  $v_{j} \rightarrow v_{0}$ in $H^{s}_{\varepsilon}$.
To do this, it is suffices to show that 
\begin{equation}\label{3.30}
\lim_{R\rightarrow \infty}\limsup_{j\rightarrow \infty} \int_{|x|\geq R} |(-\Delta)^{\frac{s}{2}} v_{j}|^{2} +V( \varepsilon x)v_{j}^{2} 
\, dx=0.
\end{equation}
Let us assume that (\ref{3.30}) is true.
Then, for any $\delta>0$ there exists $R>0$ sufficiently large such that  
\begin{equation}\label{Alv1}
\limsup_{j\rightarrow \infty} \int_{|x|\geq R} |(-\Delta)^{\frac{s}{2}} v_{j}|^{2} +V( \varepsilon x)v_{j}^{2} 
\, dx<\delta,
\end{equation}
\begin{equation}\label{Alv2}
\limsup_{j\rightarrow \infty} \int_{|x|\geq R} |(-\Delta)^{\frac{s}{2}} v_{0}|^{2} +V( \varepsilon x)v_{0}^{2} 
\, dx<\delta,
\end{equation}
and
\begin{equation}\label{Alv2}
\int_{|x|\geq R}  g(\e x, v_{0}) v_{0}\, dx<\frac{\delta}{3}.
\end{equation}
Taking into account \eqref{Alv1}, $(iii)$ of Corollary \ref{corollario 2.1} and Theorem \ref{Sembedding}, there exists $j_{0}\in \N$ such that  
\begin{equation}\label{Alv3}
\left|\int_{|x|\geq R}  g(\e x, v_{j}) v_{j}\, dx\right|<\frac{\delta}{3} \mbox{ for all } j\geq j_{0}.
\end{equation}
On the other hand, using $v_{j}\rightarrow v_{0}$ in $L^{q}(B_{R})$ for any $q\in [2, 2^{*}_{s})$, we can see that
\begin{equation}\label{Alv4}
\lim_{j\rightarrow \infty}\int_{B_{R}} g(\varepsilon x, v_{j})v_{j}\, dx= \int_{B_{R}} g(\varepsilon x, v_{0})v_{0} \, dx.
\end{equation}
From \eqref{Alv2}, \eqref{Alv3} and \eqref{Alv4}, there exists $j_{1}\geq j_{0}$ such that
\begin{equation*}
\left|\int_{\R^{N}} g(\e x, v_{j}) v_{j} \, dx-\int_{\R^{N}} g(\e x, v_{0}) v_{0} \, dx\right|<\delta \mbox{ for any } j\geq j_{1}
\end{equation*}
which implies that
\begin{align}\label{Alv5}
\lim_{j\rightarrow \infty}\int_{\R^{N}} g(\varepsilon x, v_{j})v_{j}\, dx= \int_{\R^{N}} g(\varepsilon x, v_{0})v_{0} \, dx.
\end{align}
Since $\langle J'_{\e}(v_{j}), v_{j}\rangle=o_{j}(1)$, by \eqref{Alv5} we deduce that
\begin{align}\label{Alv6}
\lim_{j\rightarrow \infty}\int_{\R^{N}} |(-\Delta)^{\frac{s}{2}}v_{j}|^{2}+V(\e x) v_{j}^{2} \, dx=\int_{\R^{N}} g(\varepsilon x, v_{0}) v_{0} \,dx,
\end{align}
and using $\langle J'_{\e}(v_{j}), v_{0}\rangle=o_{j}(1)$, we also have 
\begin{align}\label{Alv7}
\int_{\R^{N}} |(-\Delta)^{\frac{s}{2}}v_{0}|^{2}+V(\e x) v_{0}^{2} \, dx=\int_{\R^{N}} g(\varepsilon x, v_{0}) v_{0} \,dx.
\end{align}
Putting together  \eqref{Alv6} and \eqref{Alv7} we can infer that
\begin{align*} 
\lim_{j\rightarrow \infty} \|v_{j}\|_{H^{s}_{\e}}^{2}= \|v_{0}\|_{H^{s}_{\e}}^{2}.
\end{align*} 
Recalling that $H^{s}_{\varepsilon}$ is a Hilbert space we obtain that $v_{j}\rightarrow v_{0}$ in $H^{s}_{\varepsilon}$.

\noindent
Now, we show that (\ref{3.30}) holds. 
Let $\eta_{R} \in C^{\infty}(\R^{N}, \R)$ be a cut-off function such that 
\begin{align*}
\eta_{R}(x)=0 &\mbox{ for } |x|\leq \frac{R}{2}, \\
\eta_{R}(x)=1 &\mbox{ for } |x|\geq R, \\
0\leq \eta_{R}(x) \leq 1  &\mbox{ for } x\in \R^{N}, \\
|\nabla \eta_{R}(x)|\leq \frac{C}{R}  &\mbox{ for } x\in \R^{N}.
\end{align*}

Take $R>0$ such that $\frac{\Lambda}{\e}\subset B_{\frac{R}{2}}$.
Since $(v_{j} \eta_{R})$ is bounded in $H^{s}_{\e}$, we can see that $\langle J'_{\varepsilon}(v_{j}),\eta_{R}v_{j} \rangle=o_{j}(1)$.
Hence we get
\begin{align*}
\int_{\R^{N}} &(-\Delta)^{\frac{s}{2}} v_{j} (-\Delta)^{\frac{s}{2}} (v_{j} \eta_{R}) \, dx+ \int_{\R^{N}} V(\varepsilon x) v_{j}^{2}\eta_{R} \, dx \\
&= \int_{\R^{N}} \underline{f}(v_{j})v_{j}\eta_{R} \, dx +o_{j}(1) \\
&\leq \nu \int_{\R^{N}} v_{j}^{2} \eta_{R} \, dx +o_{j}(1).  
\end{align*} 
By our choice of $\nu$, we can see that there exists $\alpha \in (0,1)$ such that
\begin{align}\label{JT0}
&\iint_{\R^{N}} (-\Delta)^{\frac{s}{2}} v_{j} (-\Delta)^{\frac{s}{2}} (v_{j} \eta_{R}) \, dx +\alpha \int_{\R^{N}} V(\varepsilon x) v_{j}^{2}\eta_{R} \, dx \leq o_{j}(1).  
\end{align} 
Now we observe that 
\begin{align}\label{JT1}
&\iint_{\R^{N}} (-\Delta)^{\frac{s}{2}} v_{j} (-\Delta)^{\frac{s}{2}} (v_{j} \eta_{R}) \, dx \nonumber\\
&=\iint_{\R^{2N}} \frac{(v_{j}(x)-v_{j}(y))(v_{j}(x)\eta_{R}(x)-v_{j}(y)\eta_{R}(y))}{|x-y|^{N+2s}} \, dx dy \nonumber\\
&=\iint_{\R^{2N}} \eta_{R}(x)\frac{|v_{j}(x)-v_{j}(y)|^{2}}{|x-y|^{N+2s}} \, dx dy\nonumber \\
&+ \iint_{\R^{2N}} \frac{(v_{j}(x)-v_{j}(y))(\eta_{R}(x)-\eta_{R}(y))}{|x-y|^{N+2s}} v_{j}(y) \, dx dy \nonumber\\
&=:A_{R, j}+B_{R, j}.
\end{align}
Clearly
\begin{align}\label{JT}
A_{R, j}\geq \int_{|x|\geq R} \int_{\R^{N}}\frac{|v_{j}(x)-v_{j}(y)|^{2}}{|x-y|^{N+2s}} \, dx dy.
\end{align}
Using Lemma \ref{funlemma} and the fact that $(v_{j})$ is bounded in $H^{s}(\R^{N})$ we get 
\begin{align}\label{JT2}
\limsup_{R\rightarrow \infty} &\limsup_{j\rightarrow \infty}|B_{R, j}| \nonumber\\
&\leq \limsup_{R\rightarrow \infty}\limsup_{j\rightarrow \infty} \left(\iint_{\R^{2N}} \frac{|v_{j}(x)-v_{j}(y)|^{2}}{|x-y|^{N+2s}} \, dx dy \right)^{\frac{1}{2}} \times \nonumber \\ 
& \quad \times \left(\iint_{\R^{2N}} \frac{|\eta_{R}(x)-\eta_{R}(y)|^{2}}{|x-y|^{N+2s}} |v_{j}(y)|^{2} \, dx dy\right)^{\frac{1}{2}} \nonumber \\
&\leq C \limsup_{R\rightarrow \infty}\limsup_{j\rightarrow \infty}\left(\iint_{\R^{2N}} \frac{|\eta_{R}(x)-\eta_{R}(y)|^{2}}{|x-y|^{N+2s}} |v_{j}(y)|^{2} \, dx dy\right)^{\frac{1}{2}}=0.
\end{align}
Putting together (\ref{JT0})-(\ref{JT2}) we can infer that (\ref{3.30}) holds.

\end{proof}

\noindent
Taking into account  Lemma \ref{proposizione 3.2} and Lemma \ref{proposizione 3.1} we deduce the following result:
\begin{cor}\label{corollario 3.1}
There exists $\varepsilon_{1} \in (0, \varepsilon_{0}]$ such that for any  $\varepsilon \in (0, \varepsilon_{1}]$
there exists a critical point $v_{\varepsilon}\in H^{s}_{\varepsilon}$ of $J_{\varepsilon}(v)$ satisfying 
$J_{\varepsilon}(v_{\varepsilon})=c_{\varepsilon}$, where $c_{\varepsilon}\in [m_{1}, m_{2}]$ is defined
as in $(\ref{2.20})$-$(\ref{2.21})$. 
Moreover there exits a constant $M>0$ independent of 
$\varepsilon \in (0, \varepsilon_{1}]$ such that $\|v_{\varepsilon}\|_{H^{s}_{\varepsilon}} \leq M$ 
for any $\varepsilon\in (0, \varepsilon_{1}]$.
\end{cor}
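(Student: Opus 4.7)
The plan is to assemble Corollary \ref{corollario 3.1} from three ingredients that have already been set up: the mountain pass geometry of $J_\varepsilon$ (Lemma \ref{proposizione 2.1}), the Cerami compactness at fixed $\varepsilon$ (Lemma \ref{proposizione 3.1}), and the $\varepsilon$-uniform boundedness of critical points (Lemma \ref{proposizione 3.2}). First I would fix $\varepsilon \in (0, \varepsilon_1]$, with $\varepsilon_1$ as in Lemma \ref{proposizione 3.1}. Since Lemma \ref{proposizione 2.1} verifies the geometric hypotheses of Theorem \ref{teorema 3.1} (the base point $v = 0$, a mountain of height $\delta_0 > 0$ on the sphere $\|v\|_{H^s} = \rho_0$, and a point $v_0 \in C^\infty_0(\R^N)$ with $J_\varepsilon(v_0) < 0$), the abstract variational principle yields a Cerami sequence $(v_j) \subset H^s_\varepsilon$ at the level $c_\varepsilon$ defined in \eqref{2.20}, and Corollary \ref{corollario 2.2} guarantees that $c_\varepsilon \in [m_1, m_2]$, in particular $c_\varepsilon > 0$.

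The Cerami sequence $(v_j)$ produced in this way satisfies \eqref{3.2}--\eqref{3.3}, so Lemma \ref{proposizione 3.1} immediately provides a subsequence that converges strongly to some $v_\varepsilon \in H^s_\varepsilon$. Continuity of $J_\varepsilon$ on $H^s_\varepsilon$ and of $J'_\varepsilon : H^s_\varepsilon \to H^{-s}_\varepsilon$ then forces $J_\varepsilon(v_\varepsilon) = c_\varepsilon$ and $J'_\varepsilon(v_\varepsilon) = 0$, producing the desired critical point at the mountain pass level.

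For the uniform bound, I would apply Lemma \ref{proposizione 3.2} to the family of critical points $(v_\varepsilon)_{\varepsilon \in (0,\varepsilon_1]}$ just constructed. Hypothesis \eqref{3.4} is met because $J_\varepsilon(v_\varepsilon) = c_\varepsilon \in [m_1, m_2]$, and \eqref{3.5} holds trivially since $J'_\varepsilon(v_\varepsilon) = 0$. Lemma \ref{proposizione 3.2} then yields $\limsup_{\varepsilon \to 0} \|v_\varepsilon\|_{H^s_\varepsilon} < \infty$, so there exist $\varepsilon_* \in (0, \varepsilon_1]$ and $M > 0$ with $\|v_\varepsilon\|_{H^s_\varepsilon} \leq M$ for all $\varepsilon \in (0, \varepsilon_*]$; one then relabels $\varepsilon_*$ as the new $\varepsilon_1$. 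No genuine analytic obstacle is left at this stage, since all the heavy lifting has already been absorbed into the previous lemmas; the only point that needs a moment of care is the coherent choice of the threshold, as the $\varepsilon_1$ from Lemma \ref{proposizione 3.1} (ensuring compactness at fixed $\varepsilon$) need not coincide with the implicit threshold below which the uniform bound from Lemma \ref{proposizione 3.2} is effective, so one must work with the smaller of the two.
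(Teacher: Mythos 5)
Your proposal is correct and follows exactly the route the paper implies (the paper offers no written proof of Corollary \ref{corollario 3.1}, simply stating that it follows from Lemma \ref{proposizione 3.2} and Lemma \ref{proposizione 3.1}): use the mountain pass geometry from Lemma \ref{proposizione 2.1} together with Theorem \ref{teorema 3.1} to produce a Cerami sequence at level $c_\varepsilon \in [m_1, m_2]$, pass to a strong limit via Lemma \ref{proposizione 3.1} to obtain the critical point $v_\varepsilon$, and then apply Lemma \ref{proposizione 3.2} to the resulting family. Your remark that one must possibly shrink $\varepsilon_1$ so that the $\limsup$ bound from Lemma \ref{proposizione 3.2} becomes an actual uniform bound on $(0, \varepsilon_1]$ is a correct and appropriate observation.
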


\section{Limit equations}
In the next section we will see that the sequence of critical points obtained in Corollary \ref{corollario 3.1} converges, in some sense, to a sum of translated critical points of certain autonomous functionals.
As proved in \cite{A2}, least energy solutions for autonomous nonlinear scalar field equations have a mountain pass characterization.  This property will be fundamental to prove Theorem \ref{teorema principale}.  
For this reason, in this section we collect some important results on autonomous functionals associated with "limit equations".

Firstly, we introduce some notations and definitions which will be useful later.
For $x_{0}\in \R^{N}$ we define the autonomous functional $\Phi_{x_{0}}:H^{s}(\R^{N})\rightarrow \R$ by setting
$$
\Phi_{x_{0}}(v)= \frac{1}{2} \int_{\R^{N}} |(-\Delta)^{\frac{s}{2}} v|^{2} +V(x_{0})v^{2} \, dx - 
\int_{\R^{N}} G(x_{0}, v)\, dx.
$$
It is standard to check that $\Phi_{x_{0}}\in C^{1}(H^{s}(\R^{N}), \R)$ and critical points of $\Phi_{x_{0}}$ are weak solutions 
to the equation
\begin{align}\label{LEJT1}
(-\Delta)^{s}u+V(x_{0})u=g(x_{0}, u) \mbox{ in } \R^{N}.
\end{align}
We note that, if $u$ is a solution to \eqref{2.15}, then $v(x)=u(\e x+x_{0})$ satisfies
\begin{align}\label{LEJT2}
(-\Delta)^{s}v+V(\e x+x_{0})v=g(\e x+x_{0}, v) \mbox{ in } \R^{N},
\end{align}
that is \eqref{LEJT1} can be seen as the limit equation of \eqref{LEJT2} as $\e\rightarrow 0$.

For any $x_{0}\in \R^{N}$ and $u, v\in H^{s}(\R^{N})$ we use the following notations 
\begin{align*}
\langle u, v \rangle_{H^{s}_{\varepsilon}}&= \int_{\R^{N}} (-\Delta)^{\frac{s}{2}} u (-\Delta)^{\frac{s}{2}} v + V(\varepsilon x)uv \, dx \\
\langle u, v \rangle_{x_{0}} &= \int_{\R^{N}} (-\Delta)^{\frac{s}{2}} u (-\Delta)^{\frac{s}{2}} v+ V(x_{0})uv \, dx \\
\bigl\bracevert v \bigr\bracevert _{x_{0}}^{2} &= \int_{\R^{N}} |(-\Delta)^{\frac{s}{2}} v|^{2} + V(x_{0})v^{2} \, dx. 
\end{align*}

\noindent
Finally we define
$$
H(x, t)= -\frac{1}{2} V(x)t^{2} + G(x,t) 
$$
and
$$
\Omega= \Bigl\{x\in \R^{N} : \sup_{t>0} H(x, t)>0\Bigr\}.
$$

\begin{remark}\label{osservazione 4}
\noindent
\begin{compactenum}[(i)]
\item $\Omega \subset \Lambda$ and $0\in \{x\in \Lambda' : V(x)= \inf_{y\in \Lambda} V(y)\}\subset \Omega$.\\ 
\item If $(f3)$ or $(f5)$ with $a=\infty$ holds, then $\Omega=\Lambda$.
\end{compactenum}
\end{remark}

Now, we state the following Jeanjean-Tanaka type result \cite{JT1} proved in 
\cite{A2} (see Theorem $1$ in \cite{A2}) related to the study of the autonomous problem
\begin{equation}\label{5.1}
(-\Delta)^{s} u=h(u) \mbox{ in } \R^{N},
\end{equation}
where $h\in C^{1}(\R, \R)$ is an odd function satisfying the following Berestycki-Lions type assumptions \cite{BL1}:
\begin{compactenum}[$(h1)$]  
\item $-\infty<\liminf_{t\rightarrow 0} h(t)/t \leq \limsup_{t\rightarrow 0} h(t)/t<0$;
\item$\lim_{|t|\rightarrow \infty} \frac{h(t)}{|t|^{2^{*}_{s}-1}}=0$;
\item there exists $\bar{t}>0$ such that $H(\bar{t})>0$.
\end{compactenum}
We recall that the existence of a solution to \eqref{5.1} has been established in \cite{A2, CW}.
\begin{lem}\cite{A2}\label{prop 5.2}
Assume that $h\in C^{1}(\R, \R)$ is an odd function satisfying the Berestycki-Lions type assumptions $(h1)$-$(h3)$.
Let $\tilde{I}: H^{s}(\R^{N})\rightarrow \R$ be the functional defined by
$$
\tilde{I}(u)= \int_{\R^{N}} \frac{1}{2} |(-\Delta)^{\frac{s}{2}} u|^{2} - H(u) \,dx. 
$$
Then $\tilde{I}$ has a mountain pass geometry and
$c=m$, where $m$ is defined as 
\begin{equation}\label{5.2}
m =\inf \{\tilde{I}(u): u\in H^{s}(\R^{N})\setminus\{0\} \mbox{ is a solution to (\ref{5.1})} \},
\end{equation}
and
\begin{align*}
&c= \inf_{\gamma \in \Gamma} \max_{t\in [0, 1]} \tilde{I}(\gamma(t)) \\
& \mbox{ where } \Gamma=\{\gamma \in C([0,1], H^{s}(\R^{N})) : \gamma(0)=0, \tilde{I}(\gamma(1))<0\}.
\end{align*}
Moreover, for any least energy solution $\omega(x)$ of $(\ref{5.1})$ there exists a path 
$\gamma \in \Gamma$ such that
\begin{align}
& \tilde{I}(\gamma(t))\leq m= \tilde{I}(\omega) \quad \mbox{ for all } t\in [0,1] \label{5.3} \\
& \omega\in \gamma ([0,1]). \label{5.4}
\end{align}
\end{lem}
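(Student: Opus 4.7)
My plan is a fractional adaptation of the Jeanjean--Tanaka argument. Three assertions must be proved: (a) $\tilde I$ has a mountain pass geometry; (b) for any least--energy solution $\omega$ of \eqref{5.1} there is a path $\gamma\in\Gamma$ through $\omega$ with $\max_t\tilde I(\gamma(t))=\tilde I(\omega)$ (which also yields $c\le m$); and (c) the reverse inequality $c\ge m$. The central ingredient is the fractional Pohozaev identity: every $u\in H^s(\R^N)$ solving \eqref{5.1} satisfies $\tfrac{N-2s}{2}[u]^2=N\int_{\R^N}H(u)\,dx$. For (a) I would combine $(h1)$ and $(h2)$ to obtain $H(t)\le -\delta t^2 + C|t|^{2^*_s}$ for all $t\in\R$, so that Theorem \ref{Sembedding} gives $\tilde I(u)\ge\delta_0>0$ on a small sphere in $H^s(\R^N)$. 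For the descending endpoint, $(h3)$ combined with a cut-off produces $v\in H^s(\R^N)$ with $\int H(v)\,dx>0$; the scaling $v_\sigma(x):=v(x/\sigma)$ then yields
\[
\tilde I(v_\sigma)=\tfrac{\sigma^{N-2s}}{2}[v]^2-\sigma^N\int_{\R^N} H(v)\,dx\longrightarrow -\infty \quad\text{as } \sigma\to\infty,
\]
supplying the mountain pass endpoint.

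For (b), given any least--energy solution $\omega$, the Pohozaev identity gives $\int H(\omega)\,dx=\tfrac{N-2s}{2N}[\omega]^2$, whence
\[
\tilde I(\omega_\sigma)=\tfrac{[\omega]^2}{2N}\bigl(N\sigma^{N-2s}-(N-2s)\sigma^N\bigr)
\]
is strictly maximized at $\sigma=1$ with value $\tilde I(\omega)=\tfrac{s}{N}[\omega]^2$. Choosing $T>1$ large enough that $\tilde I(\omega_T)<0$ and setting $\gamma(t)=\omega_{tT}$ for $t\in(0,1]$, $\gamma(0)=0$ (continuity at $t=0$ follows from $\omega_\sigma\to 0$ in $H^s(\R^N)$ as $\sigma\to 0^+$, since $N>2s$) produces a path $\gamma\in\Gamma$ passing through $\omega$ and satisfying \eqref{5.3}--\eqref{5.4}. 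This immediately yields $c\le m$.

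The main obstacle is (c). I would follow the classical scaling--and--deformation strategy: on the augmented space $H^s(\R^N)\times\R$, where the second factor parametrizes the scaling $(u,\theta)\mapsto u(\cdot/e^\theta)$, apply a deformation argument to produce a Cerami sequence $(u_j)$ for $\tilde I$ at level $c$ which additionally satisfies the Pohozaev identity asymptotically, namely $P(u_j):=\tfrac{N-2s}{2}[u_j]^2-N\int H(u_j)\,dx\to 0$. The identity $\tilde I(u_j)-\tfrac{1}{N}P(u_j)=\tfrac{s}{N}[u_j]^2\to c$ then bounds $(u_j)$ in $\mathcal{D}^{s,2}(\R^N)$, and $(h1)$--$(h2)$ upgrade this to an $H^s$-bound. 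A Lions-type non-vanishing step (Lemma \ref{lionslemma}) together with translations and Lemma \ref{Strauss} yields a nontrivial weak limit $v$ which is a critical point of $\tilde I$, hence a solution of \eqref{5.1}. Applying the Pohozaev identity to $v$ and weak lower semicontinuity of $[\,\cdot\,]^2$ finally gives
\[
m\le \tilde I(v)=\tfrac{s}{N}[v]^2\le\liminf_{j\to\infty} \tfrac{s}{N}[u_j]^2=c,
\]
so $c=m$. The delicate points will be constructing the Pohozaev-constrained Cerami sequence (the scaling degree of freedom in the augmented space is what forces $P(u_j)\to 0$) and ruling out vanishing of $(u_j)$; both rely on a careful joint use of $(h1)$--$(h3)$.
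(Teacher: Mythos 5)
The paper itself does not prove this lemma; it is quoted verbatim from the author's earlier work \cite{A2}, which is the fractional analogue of the Jeanjean--Tanaka mountain pass characterization \cite{JT1}. So there is no in-paper proof to compare against, and your proposal is best judged on its own merits.

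Your computations are correct and your overall architecture is the right one. In part (a), the bound $H(t)\le -\delta t^2+C|t|^{2^*_s}$ combined with \eqref{FSI} gives the lower estimate on a small sphere, and the dilation scaling $[v_\sigma]^2=\sigma^{N-2s}[v]^2$, $\int H(v_\sigma)=\sigma^N\int H(v)$ gives the descending endpoint as soon as $\int H(v)>0$, which $(h3)$ together with a cut-off supplies. In part (b), the identity $\tilde I(\omega_\sigma)=\tfrac{[\omega]^2}{2N}\bigl(N\sigma^{N-2s}-(N-2s)\sigma^N\bigr)$ follows immediately from the fractional Pohozaev identity (exactly the form used at \eqref{5.5} of the paper) and is strictly maximized at $\sigma=1$; continuity of $\sigma\mapsto\omega_\sigma$ down to $\sigma=0$ in $H^s$ holds because both $[\omega_\sigma]^2$ and $\|\omega_\sigma\|_{L^2}^2$ vanish like powers of $\sigma$. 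This delivers $c\le m$ and the path \eqref{5.3}--\eqref{5.4} correctly. For part (c) your identity $\tilde I(u)-\tfrac1NP(u)=\tfrac{s}{N}[u]^2$ and the derived bound on $[u_j]^2$ are right, and the upgrade to an $H^s$ bound via $(h1)$, $(h2)$, Sobolev and the asymptotic Pohozaev constraint works; vanishing is excluded because the constraint forces $\int H(u_j)\to\tfrac{N-2s}{2s}c>0$, which contradicts $u_j\to 0$ in $L^t$, $t\in(2,2^*_s)$.

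The only place your route differs materially from the standard Jeanjean--Tanaka treatment that \cite{A2} follows is the proof of $c\ge m$. You invoke the augmented functional $\bar I(\theta,u)=\tilde I(u(\cdot/e^\theta))$, whose $\theta$--derivative is exactly $P$ evaluated at the rescaled function, and extract a Cerami sequence with $P(u_j)\to 0$. This is a valid, self-contained argument (it in fact re-proves existence of a least energy solution as a by-product), but it is heavier than needed. The classical alternative, which is what Jeanjean--Tanaka use and what \cite{A2} adapts, is purely elementary: for any $\gamma\in\Gamma$ one has $P>0$ near $0$ (from $(h1)$: $\int H(u)<0$ for small nonzero $u$, so $P(u)>0$) and $P(\gamma(1))<0$ (since $\tilde I(\gamma(1))<0$ forces $\tfrac12[\gamma(1)]^2<\int H(\gamma(1))$, hence $P(\gamma(1))<-s[\gamma(1)]^2<0$), so $\gamma$ crosses the Pohozaev manifold $\mathcal P=\{u\ne 0: P(u)=0\}$. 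On $\mathcal P$ one has $\tilde I=\tfrac{s}{N}[\,\cdot\,]^2\ge \inf_{\mathcal P}\tilde I=m$, the last equality coming from the Berestycki--Lions constrained minimization that already underlies the existence of $\omega$. That route needs no deformation machinery and no bespoke Cerami sequence, only the existence theorem you have already assumed in part (b). Either way the conclusion $c=m$ and \eqref{5.3}--\eqref{5.4} follow, so your proposal is correct; flagging the simpler path-crossing argument is worthwhile since the construction of a Pohozaev-constrained Cerami sequence is precisely the delicate step you singled out.
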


\noindent
At this point, we give the proof of the following lemma which we will use in the next section to obtain a concentration-compactness type result.
\begin{lem}\label{lemma 4.1}
Assume that $f$ satisfies $(f1)$-$(f3)$. Then we have
\begin{compactenum}[(i)]
\item $\Phi_{x_{0}}(v)$ has non-zero critical points if and only if $x_{0}\in \Omega$.
\item  There exists $\delta_{1}>0$, independent of $x_{0}\in \R^{N}$, such that $\bigl\bracevert v \bigr\bracevert _{x_{0}}\geq \delta_{1}$ for any non-zero critical point $v$ of $\Phi_{x_{0}}$.
\end{compactenum}
\end{lem}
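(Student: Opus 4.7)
The plan for (i) is to split into the two implications: for $x_{0}\in\Omega$, I would deduce existence of a nonzero critical point from the fractional Jeanjean-Tanaka theorem (Lemma \ref{prop 5.2}), while the converse will follow from the fractional Pohozaev identity together with the fact that every critical point of $\Phi_{x_{0}}$ is nonnegative. Part (ii) is a direct consequence of the uniform subcritical growth of $g$ encoded in Corollary \ref{corollario 2.1}$(iii)$.

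To establish existence in (i) when $x_{0}\in\Omega$, I would consider the effective nonlinearity $h(t):=g(x_{0},t)-V(x_{0})t$, odd-extended to $t<0$. Corollary \ref{corollario 2.1}$(iii)$ yields $h(t)/t\to -V(x_{0})<0$ as $t\to 0$, giving $(h1)$; $(h2)$ follows from $g(x,t)\leq f(t)$ and $(f3)$; and $(h3)$ — the existence of $\bar t>0$ with $H(x_{0},\bar t)>0$ — is precisely the assumption $x_{0}\in\Omega$, since the primitive of $h$ restricted to $t\geq 0$ coincides with $H(x_{0},t)$. Lemma \ref{prop 5.2} then produces a nontrivial critical point $\omega$ of the associated mountain pass functional. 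Since $g(x_{0},t)\equiv 0$ on $(-\infty,0]$ and the fractional seminorm satisfies $[|u|]\leq [u]$, one may assume $\omega\geq 0$, and then $\omega$ is a nonzero critical point of $\Phi_{x_{0}}$.

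For the non-existence direction, let $v\in H^{s}(\R^{N})\setminus\{0\}$ be a critical point of $\Phi_{x_{0}}$. Testing $\langle\Phi'_{x_{0}}(v),v^{-}\rangle=0$, using $g(x_{0},t)=0$ for $t\leq 0$ and the pointwise inequality $(v(x)-v(y))(v^{-}(x)-v^{-}(y))\geq |v^{-}(x)-v^{-}(y)|^{2}$, I would deduce $[v^{-}]^{2}+V(x_{0})\|v^{-}\|_{L^{2}(\R^{N})}^{2}\leq 0$, hence $v\geq 0$ a.e. The fractional Pohozaev identity for solutions of $(-\Delta)^{s}v+V(x_{0})v=g(x_{0},v)$ (see \cite{A1, CW, Secchi2}) then reads
$$
\int_{\R^{N}}H(x_{0},v(x))\,dx=\frac{N-2s}{2N}[v]^{2}.
$$
If $x_{0}\notin\Omega$, then $H(x_{0},t)\leq 0$ for every $t\geq 0$, so the left-hand side is nonpositive while the right-hand side is nonnegative; this forces $[v]=0$, and since $v\in L^{2}(\R^{N})$ it gives $v\equiv 0$, contradicting $v\ne 0$. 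The delicate point here is the validity of Pohozaev for the merely Lipschitz nonlinearity $g(x_{0},\cdot)$, which is the main technical obstacle but is by now standard and covered by the cited references.

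For (ii), from $\langle\Phi'_{x_{0}}(v),v\rangle=0$ together with Corollary \ref{corollario 2.1}$(iii)$ applied with $\delta=V_{0}/2$, one obtains
$$
\bigl\bracevert v\bigr\bracevert_{x_{0}}^{2}=\int_{\R^{N}}g(x_{0},v)v\,dx\leq\frac{V_{0}}{2}\|v\|_{L^{2}(\R^{N})}^{2}+C\|v\|_{L^{p+1}(\R^{N})}^{p+1}.
$$
Using $V(x_{0})\geq V_{0}$ to absorb the first term on the right into $\tfrac{1}{2}\bigl\bracevert v\bigr\bracevert_{x_{0}}^{2}$, and the Sobolev embedding of Theorem \ref{Sembedding} together with $\|v\|_{H^{s}}\leq\bigl\bracevert v\bigr\bracevert_{x_{0}}$ to bound the second term by $C'\bigl\bracevert v\bigr\bracevert_{x_{0}}^{p+1}$ with $C'$ depending only on $N,s,p,V_{0}$ (hence independent of $x_{0}$), one concludes
$$
\tfrac{1}{2}\bigl\bracevert v\bigr\bracevert_{x_{0}}^{2}\leq C'\bigl\bracevert v\bigr\bracevert_{x_{0}}^{p+1},
$$
which, since $p>1$ and $v\ne 0$, yields the uniform lower bound $\bigl\bracevert v\bigr\bracevert_{x_{0}}\geq(2C')^{-1/(p-1)}=:\delta_{1}$.
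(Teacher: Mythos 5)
Your proposal is correct and follows essentially the same route as the paper. For (i), you verify $(h1)$--$(h3)$ for $h(t)=g(x_{0},t)-V(x_{0})t$ exactly as the paper does and then invoke Lemma \ref{prop 5.2} for existence when $x_{0}\in\Omega$. For the nonexistence direction you spell out the Pohozaev argument (a critical point would satisfy $\int H(x_{0},v)\,dx=\frac{N-2s}{2N}[v]^{2}$, which is impossible when $H(x_{0},\cdot)\leq 0$ on $[0,\infty)$); the paper compresses this to ``$(i)$ follows by Lemma \ref{prop 5.2}'', leaving the necessity of $(h3)$ implicit in the Berestycki--Lions framework, so your version is a mild elaboration rather than a different argument. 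Your preliminary sign step (testing with $v^{-}$) is harmless but not actually needed: since $g(x_{0},t)=G(x_{0},t)=0$ for $t\leq 0$, one already has $H(x_{0},t)=-\tfrac12 V(x_{0})t^{2}\leq 0$ there, so $H(x_{0},v)\leq 0$ pointwise for $x_{0}\notin\Omega$ regardless of the sign of $v$. For (ii), your estimate via $\langle\Phi'_{x_{0}}(v),v\rangle=0$, Corollary \ref{corollario 2.1}(iii), absorption of the $L^{2}$ term, and Sobolev embedding is the same chain as the paper (which routes through $g\leq f$ and \eqref{2.7} instead of Corollary \ref{corollario 2.1}(iii), an immaterial variant), and correctly produces a uniform $\delta_{1}$.
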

\begin{proof}
Firstly, we extend $f(t)$ to an odd function on $\R$. Let us consider the function
$$
h(t)= -V(x_{0}) t + g(x_{0}, t), 
$$
that is $h(t)=H'(x_{0}, t)$. Clearly $h$ is odd. Now we show that $h$ satisfies assumptions $(h1)$-$(h3)$.
From $(f2)$ and $(f3)$ it follows that $(h1)$ and $(h2)$ hold.\\
Since $\Omega=\{x\in \R^{N} : \sup_{t >0} H(x, t)>0\}$, we can see that $(h3)$ is true if and only if $x_{0}\in \Omega$. 
Then $(i)$ follows by Theorem $1$ in \cite{A2} (see also Theorem $1.1$ in \cite{CW}).\\
Now let $v$ be a non-zero critical point of $\Phi_{x_{0}}$. Then 
$$
\langle \Phi'_{x_{0}}(v),v\rangle=0 \Rightarrow \int_{\R^{N}} |(-\Delta)^{\frac{s}{2}} v|^{2} + V(x_{0}) v^{2} \, dx- 
\int_{\R^{N}} g(x_{0}, v)v \, dx=0.
$$
Using $(i)$ of Corollary \ref{corollario 2.1} we get
$$
\|v\|_{H^{s}}^{2} - \int_{\R^{N}} f(v)v \, dx\leq 0,
$$
so by (\ref{2.7}) it follows that for any $\delta \in (0, V_{0})$
\begin{align*}
\|v\|_{H^{s}}^{2} &\leq \delta \|v\|_{L^{2}(\R^{N})}^{2} + C_{\delta}\|v\|_{L^{p+1}(\R^{N})}^{p+1} \\
&\leq \frac{\delta}{V_{0}} \|v\|_{H^{s}}^{2}+ C_{\delta}C'_{p+1}\|v\|_{H^{s}}^{p+1}. 
\end{align*}
Then
$$
\Bigl(1- \frac{\delta}{V_{0}}\Bigr) \|v\|_{H^{s}}^{2} \leq C_{\delta}C'_{p+1}\|v\|_{H^{s}}^{p+1}, 
$$
and we can find $\delta_{1}>0$ such that $\|v\|_{H^{s}}\geq \delta_{1}$ for any $x_{0}\in \R^{N}$
and for any non-zero critical point $v$. 
Since $\bigl \bracevert v \bigr \bracevert_{x_{0}} \geq \|v\|_{H^{s}}$ we can infer that $(ii)$ is verified.
\end{proof}

\noindent
For any $x\in \R^{N}$, we set 
$$
m(x):=
\left\{
\begin{array}{ll}
\mbox{least energy level of } \Phi_{x}(v) & \mbox{ if } x\in \Omega \\
\infty &  \mbox{ if }  x\in \R^{N}\setminus \Omega.
\end{array}
\right.
$$
By Lemma \ref{prop 5.2}, we can see that  $m(x)$ is equal to the mountain pass value for 
 $\Phi_{x}(v)$ if $x\in \Omega$, that is
$$
m(x)= \inf_{\gamma \in \Gamma} \Bigl( \max_{t\in [0,1]} \Phi_{x}(\gamma(t)) \Bigr)
$$
where $\Gamma=\{\gamma\in C([0, 1], H^{s}(\R^{N})): \gamma(0)=0 \mbox{ and } \Phi_{x}(\gamma(1))<0\}$.

\noindent
Now we prove the following result
\begin{lem}\label{prop 5.3}
$$
m(x_{0})= \inf_{x\in \R^{N}} m(x) \mbox{ if and only if } x_{0}\in \Lambda \mbox{ e } 
V(x_{0})= \inf_{x\in \Lambda} V(x).
$$
In particular, $m(0)= \inf_{x\in \R^{N}} m(x)$.
\end{lem}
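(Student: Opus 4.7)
The proof rests on a pointwise comparison between the autonomous functionals $\Phi_x$ and the reference functional $\Phi_0$. Recall that by the normalization adopted, $0 \in \Lambda'$, so $\chi(0) = 1$, $V(0) = \inf_\Lambda V$, and $G(0, v) = F(v)$; moreover, Remark \ref{osservazione 4} gives $\Omega \subset \Lambda$, so $m(x) = \infty$ whenever $x \notin \Lambda$. The plan is to first establish $m(x) \geq m(0)$ for every $x \in \R^N$ and then characterize the case of equality.

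For the lower bound, the case $x \notin \Omega$ is trivial. For $x \in \Omega \subset \Lambda$, I would use $V(x) \geq \inf_\Lambda V = V(0)$ together with $\chi(x) \leq 1$ to get
$$
G(x, v) = \chi(x) F(v) + (1 - \chi(x)) \underline{F}(v) \leq F(v) = G(0, v) \qquad \forall v \in H^s(\R^N),
$$
hence $\Phi_x(v) \geq \Phi_0(v)$ pointwise. Every admissible path for the mountain pass problem of $\Phi_x$ (namely $\gamma(0) = 0$ and $\Phi_x(\gamma(1)) < 0$) is then admissible for $\Phi_0$, and $\max_t \Phi_x(\gamma(t)) \geq \max_t \Phi_0(\gamma(t)) \geq m(0)$; taking the infimum yields $m(x) \geq m(0)$.

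The ``$\Leftarrow$'' direction is now immediate: if $x_0 \in \Lambda$ with $V(x_0) = \inf_\Lambda V$, the inequality $\inf_{\Lambda \setminus \Lambda'} V > \inf_\Lambda V$ built into the construction of $\chi$ forces $x_0 \in \Lambda'$, hence $\chi(x_0) = 1$ and $V(x_0) = V(0)$, so $\Phi_{x_0} = \Phi_0$ and $m(x_0) = m(0) = \inf_{\R^N} m$. Taking $x_0 = 0$ in particular gives the final sentence of the statement.

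For the converse, assume $m(x_0) = \inf_{\R^N} m = m(0)$; finiteness gives $x_0 \in \Omega \subset \Lambda$, hence $V(x_0) \geq V(0)$. Suppose for contradiction $V(x_0) > V(0)$. Applying Lemma \ref{prop 5.2} to $\Phi_{x_0}$, I obtain a least energy solution $\omega$ and, thanks to \eqref{5.3}--\eqref{5.4}, a path $\gamma$ admissible for $\Phi_{x_0}$ passing through $\omega$ with $\Phi_{x_0}(\gamma(t)) \leq m(x_0)$ for all $t \in [0,1]$. Since $\Phi_0 \leq \Phi_{x_0}$, this $\gamma$ is also admissible for $\Phi_0$; choose $t^* \in [0,1]$ maximizing $\Phi_0 \circ \gamma$. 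Because $m(0) > 0$ and $\Phi_0(0) = 0$, necessarily $\gamma(t^*) \neq 0$, so
$$
\Phi_0(\gamma(t^*)) \leq \Phi_{x_0}(\gamma(t^*)) - \tfrac{V(x_0) - V(0)}{2} \|\gamma(t^*)\|_{L^2(\R^N)}^2 < \Phi_{x_0}(\gamma(t^*)) \leq m(x_0) = m(0),
$$
contradicting $\max_t \Phi_0(\gamma(t)) \geq m(0)$. The only delicate point is thus upgrading the weak pointwise bound $\Phi_0 \leq \Phi_{x_0}$ to a strict comparison of mountain pass values, and this is exactly why the full strength of Lemma \ref{prop 5.2} (with its extra information that the max along the chosen path is attained at a nonzero critical point) is needed.
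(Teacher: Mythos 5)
Your proof is correct and follows essentially the same route as the paper's: both rely on the pointwise inequality $\Phi_{x_0}(v)\leq \Phi_{\bar x}(v)$ (coming from $V(x_0)\leq V(\bar x)$ and $G(\bar x,t)\leq F(t)=G(x_0,t)$), and both invoke the special path from Lemma \ref{prop 5.2} applied to the functional with the larger potential value, combined with the observation that the relevant maximizing point is nonzero, to get strictness of the comparison of mountain pass levels. The only cosmetic difference is that the paper phrases the strict-comparison step as $m(x_0)<m(x')$ for any $x'\in\Lambda$ with $V(x')>V(x_0)$, while you phrase it as a contradiction $m(0)<m(x_0)$; these are the same argument with the roles of the two points relabeled.
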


\begin{proof}
Fix $x_{0}\in \Lambda $ such that $V(x_{0})= \inf_{x\in \Lambda} V(x)$. 
We note that $x_{0}\in \Lambda'$. Otherwise, if $x_{0}\in \Lambda\setminus\Lambda'$, then 
$$
V(x_{0})\geq \inf_{x\in \Lambda \setminus \Lambda'} V(x)> 
\inf_{x\in \Lambda} V(x)
$$ 
which is impossible. 
Hence $x_{0}\in \Lambda'$ and $\chi(x_{0})=1$.
Moreover, $x_{0}\in \Omega$ by Remark \ref{osservazione 4}. Now, using the fact that $V(x)\geq V(x_{0})$ in $\Lambda$ and $G(x, t )\leq F(t)$ 
for any $(x, t)\in \R^{N}\times \R$, we get for any $\bar{x}\in \Omega$
\begin{align*}
\Phi_{\bar{x}}(v)&= \frac{1}{2} \|(-\Delta)^{\frac{s}{2}} v\|_{L^{2}(\R^{N})}^{2} + \frac{1}{2} V(\bar{x})\|v\|_{L^{2}(\R^{N})}^{2}-
\int_{\R^{N}} G(\bar{x}, v) \,dx \\
&\geq \frac{1}{2} \|(-\Delta)^{\frac{s}{2}}  v\|_{L^{2}(\R^{N})}^{2} +\frac{1}{2} V(x_{0})\|v\|_{L^{2}(\R^{N})}^{2}-
\int_{\R^{N}} F(v) \,dx \\
&= \Phi_{x_{0}}(v)  \mbox{ for any } v\in H^{s}(\R^{N}).
\end{align*}
This implies that $m(x_{0})\leq m(x)$ for all $x\in \R^{N}$, so we have 
$
m(x_{0})\leq \inf_{x\in \R^{N}} m(x) \leq m(x_{0})
$ 
that is $m(x_{0})=\inf_{x\in \R^{N}} m(x)$.\\
Now we fix $x'\in\Lambda$ such that $V(x')>V(x_{0})$. Take $\gamma \in \Gamma$ such that
 (\ref{5.3}) and (\ref{5.4}) hold with $\tilde{I}(v)= \Phi_{x'}(v)$.
Then we deduce that 
$$
m(x_{0})\leq \max_{t\in [0,1]} \Phi_{x_{0}}(\gamma(t))< \max_{t\in [0,1]} \Phi_{x'}(\gamma(t))=m(x').
$$
\end{proof}

\noindent
Finally, we show the continuity of  $m(x)$.
\begin{prop}
The function $m(x): \R^{N} \mapsto (-\infty, \infty]$ is continuous in the following sense:
\begin{align*}
&m(x_{j})\rightarrow m(x_{0}) \quad \mbox{ if } x_{j}\rightarrow x_{0}\in \Omega \\
&m(x_{j})\rightarrow \infty        \quad \mbox{ if } x_{j}\rightarrow x_{0}\in \R^{N}\setminus\Omega.
\end{align*}
\end{prop}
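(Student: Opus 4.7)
My plan is to treat the two cases separately. A preliminary remark is that $\Omega$ is open: since $H(x,t)$ is continuous in $x$, the map $x\mapsto\sup_{t>0}H(x,t)$ is lower semicontinuous, so $\Omega$ is open. Hence when $x_{j}\to x_{0}\in\Omega$ we have $x_{j}\in\Omega$ eventually and all $m(x_{j})$ are finite. The second ingredient is the fractional Pohozaev identity (see \cite{A1,CW,Secchi2}): for any critical point $u$ of $\Phi_{x}$ one has
$$\Phi_{x}(u)=\frac{s}{N}[u]^{2},$$
obtained by combining $\langle\Phi'_{x}(u),u\rangle=0$ with $\frac{N-2s}{2}[u]^{2}=N\int_{\R^{N}}H(x,u)\,dx$. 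In particular $m(x)=\frac{s}{N}[\omega_{x}]^{2}$ for any least energy solution $\omega_{x}$.

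\textbf{Upper semicontinuity in Case 1.} Fix a least energy solution $\omega_{0}$ of $\Phi_{x_{0}}$ and, via Lemma \ref{prop 5.2}, a path $\gamma_{0}\in\Gamma$ with $\omega_{0}\in\gamma_{0}([0,1])$ and $\max_{t\in[0,1]}\Phi_{x_{0}}(\gamma_{0}(t))=m(x_{0})$. Since $\gamma_{0}([0,1])$ is compact in $H^{s}(\R^{N})$, the continuity of $V$ and $\chi$ at $x_{0}$, together with dominated convergence applied to $\int_{\R^{N}}G(x,v)\,dx$, yields $\Phi_{x_{j}}(v)\to\Phi_{x_{0}}(v)$ uniformly for $v\in\gamma_{0}([0,1])$. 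In particular $\Phi_{x_{j}}(\gamma_{0}(1))<0$ for $j$ large, so $\gamma_{0}$ is admissible for $\Phi_{x_{j}}$, and
$$m(x_{j})\leq\max_{t\in[0,1]}\Phi_{x_{j}}(\gamma_{0}(t))\to m(x_{0}),$$
whence $\limsup_{j\to\infty}m(x_{j})\leq m(x_{0})$.

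\textbf{Lower semicontinuity in Case 1 and all of Case 2.} I argue by contradiction, assuming a subsequence (still denoted $j$) with $m(x_{j})\to\ell$, where $\ell<m(x_{0})$ in Case 1 or $\ell<\infty$ in Case 2. Pick least energy solutions $\omega_{j}$, so $[\omega_{j}]^{2}=\frac{N}{s}m(x_{j})$ is bounded. Testing $(-\Delta)^{s}\omega_{j}+V(x_{j})\omega_{j}=g(x_{j},\omega_{j})$ against $\omega_{j}$, using Corollary \ref{corollario 2.1}$(iii)$ with $\delta=V_{0}/2$ and Gagliardo--Nirenberg interpolation (available because $p+1>2$), I deduce that $\|\omega_{j}\|_{L^{2}(\R^{N})}$ is also bounded, hence $(\omega_{j})$ is bounded in $H^{s}(\R^{N})$. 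The lower bound $\bigl\bracevert\omega_{j}\bigr\bracevert_{x_{j}}\geq\delta_{1}$ of Lemma \ref{lemma 4.1}$(ii)$, combined with Corollary \ref{corollario 2.1}$(iii)$, prevents $\omega_{j}\to 0$ in $L^{p+1}(\R^{N})$; applying Lemma \ref{lionslemma} yields $y_{j}\in\R^{N}$ and $\delta>0$ with $\int_{B_{1}(y_{j})}|\omega_{j}|^{2}\,dx\geq\delta$. Since the autonomous equation is translation-invariant, $\tilde\omega_{j}(x):=\omega_{j}(x+y_{j})$ solves the same problem and, up to a subsequence, $\tilde\omega_{j}\rightharpoonup\omega\not\equiv 0$ in $H^{s}(\R^{N})$. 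Passing to the limit via $V(x_{j})\to V(x_{0})$, $\chi(x_{j})\to\chi(x_{0})$ and dominated convergence on the nonlinear term (thanks to the growth control and the local compactness $H^{s}\hookrightarrow L^{p+1}_{\mathrm{loc}}$), $\omega$ is a nontrivial critical point of $\Phi_{x_{0}}$. In Case 2 this contradicts Lemma \ref{lemma 4.1}$(i)$. In Case 1, weak lower semicontinuity of $[\cdot]^{2}$ combined with the Pohozaev identity gives
$$m(x_{0})\leq\Phi_{x_{0}}(\omega)=\frac{s}{N}[\omega]^{2}\leq\frac{s}{N}\liminf_{j\to\infty}[\tilde\omega_{j}]^{2}=\lim_{j\to\infty}\Phi_{x_{j}}(\omega_{j})=\ell<m(x_{0}),$$
the required contradiction.

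The main technical hurdle is the concentration-compactness step underlying the lower bound: one must rule out vanishing via Lemmas \ref{lionslemma} and \ref{lemma 4.1}$(ii)$, translate by an appropriately chosen sequence $(y_{j})$, and then verify that the weak limit is a \emph{nontrivial} critical point of $\Phi_{x_{0}}$. The Pohozaev identity is what allows the resulting energy chain to close cleanly through weak lower semicontinuity of $[\cdot]^{2}$.
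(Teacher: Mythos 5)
Your proof is correct, and the lower-semicontinuity half takes a genuinely different route from the paper's. For upper semicontinuity both arguments are essentially the same: fix a path and show $\Phi_{x_j}\to\Phi_{x_0}$ uniformly along it. For lower semicontinuity the paper fixes a \emph{radial} least-energy solution $u_j\in H^s_r(\R^N)$, then applies Lions' radial compactness (Theorem~\ref{Lions}) and the Strauss-type Lemma~\ref{Strauss} to produce a nonzero weak limit and to pass to the limit in $\int G(x_j,u_j)\,dx$, concluding by weak lower semicontinuity of the two norm terms. You instead dispense with radiality: you rule out vanishing by combining the uniform lower bound of Lemma~\ref{lemma 4.1}$(ii)$ with the contrapositive of Lemma~\ref{lionslemma}, translate by the recovered $(y_j)$, extract a nontrivial weak limit $\omega$, identify it as a critical point of $\Phi_{x_0}$, and then close the energy chain by invoking the Pohozaev identity $\Phi_x(u)=\frac{s}{N}[u]^2$ on \emph{both} sides so that the whole inequality reduces to weak lower semicontinuity of the Gagliardo seminorm. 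You also replace the paper's second Pohozaev-based estimate for $\|u_j\|_{L^2}$ (Step~3) with testing the Euler--Lagrange equation against $\omega_j$ plus fractional Gagliardo--Nirenberg, which works because $p+1<2^*_s$ and $N>2s$. The paper's radial route is shorter once one grants that least-energy solutions of the autonomous problem may be taken radial (a standard but unproved-here fact); your translation argument is self-contained, avoids that assumption entirely, and the double use of Pohozaev makes the limiting inequality cleaner since no Strauss-type lemma is needed. Your preliminary observation that $\Omega$ is open is a welcome justification of a point the paper leaves implicit.
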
 

\begin{proof}
Firstly, we fix $x_{0}\in \Omega$ and we take $(x_{j})\subset \Omega$ such that $x_{j} \rightarrow x_{0}$. 
We aim to prove that $m(x)$ is upper semicontinuous, that is
$$\limsup_{j\rightarrow \infty} m(x_{j})\leq m(x_{0}).$$
In order to prove it, we show that for any fixed $\gamma \in \Gamma$, the map
$$
L_{\gamma} : x\in \Omega \mapsto \max_{t\in [0,1]} \Phi_{x}(\gamma(t))
$$
is continuous.
For any $t\in [0,1]$, we have
\begin{align*}
\Phi_{x_{j}}(\gamma(t))- \Phi_{x_{0}}(\gamma(t)) &= \frac{1}{2} \int_{\R^{N}} [V(x_{j}) - V(x_{0})]|\gamma(t) (x)|^{2} \,dx \\
&-\int_{\R^{N}} [G(x_{j}, \gamma(t) (x)) - G(x_{0}, \gamma(t) (x))] \,dx.
\end{align*}
Then, the continuity of $V$ and the definition of $G$ yield  
$$
\Bigl|\max_{t\in [0,1]} \Phi_{x_{j}}(\gamma(t)) - \max_{t\in [0,1]} \Phi_{x_{0}}(\gamma(t)) \Bigr| 
\leq \max_{t\in [0,1]} |\Phi_{x_{j}}(\gamma(t)) - \Phi_{x_{0}}(\gamma(t))|\rightarrow 0.
$$
Hence, being $m(x_{0})= \inf_{\gamma \in \Gamma} L_{\gamma}(x_{0})$, we deduce that
$m(x)$ is upper semicontinuous.
Now we show that $m(x)$ is lower semicontinuous.  
In order to achieve our aim, we prove that for any least energy solution $u_{j}(x)$ of $\Phi_{x_{j}}(v)$ we have 
\begin{compactenum}[(i)]
\item  $\|u_{j}\|_{H^{s}(\R^{N})}$ is bounded as $j\rightarrow \infty$;
\item  after extracting a subsequence, $u_{j}$ has a non-zero weak limit $u_{0}$ and 
$$
\liminf_{j\rightarrow \infty} \Phi_{x_{j}} (u_{j})\geq \Phi_{x_{0}} (u_{0}).
$$
\end{compactenum}
Indeed, it is clear that one can see that $u_{0}$ is a non-zero critical point of $\Phi_{x_{0}}(v)$, and then we have 
$$
\liminf_{j\rightarrow \infty} m(x_{j}) = \liminf_{j\rightarrow \infty} \Phi_{x_{j}}(u_{j}) 
\geq \Phi_{x_{0}}(u_{0}) \geq m(x_{0}).
$$
Assume that $u_{j}\in H^{s}_{r}(\R^{N})$. We know that  
$u_{j}(x)$ satisfies the Pohozaev Identity \cite{A1, CW, Secchi2}:
\begin{equation}\label{5.5}
\frac{N-2s}{2}\|(-\Delta)^{\frac{s}{2}} u_{j}\|_{L^{2}(\R^{N})}^{2} = N\int_{\R^{N}} H(x_{j}, u_{j}(x)) \,dx.
\end{equation}
Now, we divide the proof in several steps.
\begin{description}
\item [Step $1$] There exists $m_{0}, m_{1}>0$ (independent of $j$) such that
$$
m_{0}\leq m(x_{j})\leq m_{1} \quad \forall j\in \N. 
$$
\end{description}

\noindent
The existence of $m_{1}$ follows by the fact that $m(x)$ is upper semicontinuous. 
Concerning $m_{0}$, we note that
$$
\Phi_{x_{j}}(v)\geq \frac{1}{2} \|(-\Delta)^{\frac{s}{2}} v\|_{L^{2}(\R^{N})}^{2} +
\frac{1}{2} V_{0} \|v\|_{L^{2}(\R^{N})}^{2} - \int_{\R^{N}} F(v) \,dx.
$$
Then, denoted by $m_{0}$ the mountain pass value of
$$
v\mapsto \frac{1}{2} \|(-\Delta)^{\frac{s}{2}} v\|_{L^{2}(\R^{N})}^{2} +
\frac{1}{2} V_{0} \|v\|_{L^{2}(\R^{N})}^{2} - \int_{\R^{N}} F(v) \,dx,
$$
we get the thesis.
\begin{description}
\item [Step $2$] $\frac{N}{s}m_{0} \leq \|(-\Delta)^{\frac{s}{2}} u_{j}\|_{L^{2}(\R^{N})}^{2} \leq \frac{N}{s}m_{1}$ for any $j\in \N$.
\end{description}

\noindent
In view of (\ref{5.5}) we obtain
\begin{align*}
m(x_{j})&= \Phi_{x_{j}}(u_{j}) \\
&=\frac{1}{2} \|(-\Delta)^{\frac{s}{2}} u_{j}\|_{L^{2}(\R^{N})}^{2} - \int_{\R^{N}} H(x_{j}, u_{j}(x))\,dx \\
&= \frac{s}{N}\|(-\Delta)^{\frac{s}{2}} u_{j}\|_{L^{2}(\R^{N})}^{2}
\end{align*}
and using Step $1$ we deduce that 
$$\frac{N}{s}m_{0}\leq \|(-\Delta)^{\frac{s}{2}} u_{j}\|_{L^{2}(\R^{N})}^{2} \leq \frac{N}{s}m_{1}.$$

\begin{description}
\item [Step $3$] Boundedness of $\|u_{j}\|_{L^{2}(\R^{N})}$. 
\end{description}

\noindent
Taking into account \eqref{5.5}, the definition of $H(x,t)$, \eqref{2.2}, $|g(x,t)|\leq \delta|t|+C_{\delta}|t|^{2^{*}_{s}-1}$, Theorem \ref{Sembedding} and Step $2$, we have for any $\delta\in (0, V_{0})$
\begin{align*}
N\frac{V_{0}}{2}\|u_{j}\|^{2}_{L^{2}(\R^{N})}&\leq N\frac{\delta}{2} \|u_{j}\|^{2}_{L^{2}(\R^{N})}+N\frac{C_{\delta}}{2^{*}_{s}}\|u_{j}\|^{2^{*}_{s}}_{L^{2^{*}_{s}}(\R^{N})}\\
&\leq N\frac{\delta}{2} \|u_{j}\|^{2}_{L^{2}(\R^{N})}+N \frac{C_{\delta}}{2^{*}_{s}}S_{*}^{-\frac{2^{*}_{s}}{2}}\left(\frac{N}{s}m_{1}\right)^{\frac{2^{*}_{s}}{2}},
\end{align*}
which implies that $(u_{j})$ is a bounded sequence in $L^{2}(\R^{N})$.\\

\begin{description}
\item [Step $4$] After extracting a subsequence, $u_{j}$ has a non-zero weak limit $u_{0}$. 
\end{description}

\noindent
Gathering Step $2$ and Step $3$, we know that $(u_{j})$ is bounded in $H^{s}_{r}(\R^{N})$, and we denote by $u_{0}$ its weak limit.
Assume by contradiction that $u_{0}\equiv 0$. \\
Then, in view of Theorem \ref{Lions}, we have
\begin{align*}
&u_{j}\rightharpoonup 0 \mbox{ in } H^{s}(\R^{N}),\\
&u_{j}\rightarrow 0 \mbox{ in } L^{q}(\R^{N}) \quad \forall q\in (2, 2^{*}_{s}).
\end{align*}
Taking into account that $\langle \Phi'_{x_{j}}(u_{j}), u_{j}\rangle=0$ and Step $2$, we can deduce that 
\begin{align}\label{mora}
0<\frac{N}{s}m_{0}\leq \|(-\Delta)^{\frac{s}{2}} u_{j}\|_{L^{2}(\R^{N})}^{2}+ V(x_{j}) \|u_{j}\|^{2}_{L^{2}(\R^{N})}= \int_{\R^{N}} g(x_{j}, u_{j}) u_{j}\, dx.
\end{align}
Applying Lemma \ref{Strauss} twice (with $P(t)=f(t)t$ and $P(t)=\underline{f}(t)t$, $q_{1}=2$ and $q_{2}=p+1$) and using $\chi(x)\in[0,1]$, we can see that
\begin{equation*}
\int_{\R^{N}} g(x_{j}, u_{j}) u_{j}\, dx=\chi(x_{j})\int_{\R^{N}} f(u_{j})u_{j}\, dx+(1-\chi(x_{j}))\int_{\R^{N}} \underline{f}(u_{j})u_{j}\, dx \rightarrow 0,
\end{equation*}
which is incompatible with (\ref{mora}).

\begin{description}
\item [Step $5$] $\liminf_{j\rightarrow \infty} \Phi_{x_{j}}(u_{j})\geq \Phi_{x_{0}}(u_{0})$.
\end{description}
Let us note that
\begin{align*}
\Phi_{x_{j}}(u_{j}) 
&= \frac{1}{2}\|(-\Delta)^{\frac{s}{2}} u_{j}\|_{L^{2}(\R^{N})}^{2} 
+\frac{1}{2}V(x_{j})\|u_{j}\|_{L^{2}(\R^{N})}^{2} - 
\int_{\R^{N}} G(x_{j}, u_{j}) \,dx, 
\end{align*}
and
\begin{align*}
\|u_{0}\|_{L^{2}(\R^{N})}^{2} &\leq \liminf_{j\rightarrow \infty} \|u_{j}\|_{L^{2}(\R^{N})}^{2} \\
\|(-\Delta)^{\frac{s}{2}} u_{0}\|_{L^{2}(\R^{N})}^{2} &\leq \liminf_{j\rightarrow \infty} \|(-\Delta)^{\frac{s}{2}} u_{j}\|_{L^{2}(\R^{N})}^{2}
\end{align*} 
by the weak lower semicontinuity of the $H^{s}(\R^{N})$-norm.
On the other hand, using Theorem \ref{Lions}, Lemma \ref{Strauss} (applied to $F(t)$ and $\underline{F}(t)$) and the continuity of $\chi$, we have
\begin{align*}
\int_{\R^{N}} G(x_{j}, u_{j}) \,dx \rightarrow \int_{\R^{N}} G(x_{0}, u_{0}) \,dx 
\mbox{ as } j\rightarrow \infty.
\end{align*}
Therefore, the above facts and $V(x_{j})\rightarrow V(x_{0})$ as $j\rightarrow \infty$, yield
\begin{align*}
\liminf_{j\rightarrow \infty} \Phi_{x_{j}}(u_{j}) 
&\geq \Phi_{x_{0}}(u_{0}).
\end{align*}
Finally, we deal with the case $x_{0}\notin \Omega$.

\begin{description}
\item [Step $6$] Let $x_{0}\notin \Omega $ and $(x_{j})$ such that $x_{j}\rightarrow x_{0}$.
Then $m(x_{j})\rightarrow \infty$. 
\end{description}
We argue by contradiction, and we assume that $m(x_{j}) \not\rightarrow \infty$.
Then, there exists a subsequence, which we denote again by $(x_{j})$, 
such that $m(x_{j})$ stays bounded as $j\rightarrow \infty$. 
Arguing as in Steps $1$-$5$, we can find a non-zero critical point  
of $\Phi_{x_{0}}(v)$, which contradicts $(i)$ of Lemma \ref{lemma 4.1}.
\end{proof}

\section{$\e$-dependent concentration-compactness result}

This section is devoted to the study  of the behavior as  $\varepsilon \rightarrow 0$ of critical points $(v_{\varepsilon})$ obtained in Corollary \ref{corollario 3.1}. More generally we consider $(v_{\varepsilon})$ such that
\begin{align}
& v_{\varepsilon}\in H^{s}_{\varepsilon}, \label{4.1} \\ 
& J_{\varepsilon}(v_{\varepsilon}) \rightarrow c\in \R, \label{4.2}\\
&(1+ \|v_{\varepsilon}\|_{H^{s}_{\varepsilon}})\|J'_{\varepsilon}(v_{\varepsilon})\|_{H^{-s}_{\varepsilon}}
\rightarrow 0, \label{4.3} \\
& \|v_{\varepsilon}\|_{H^{s}_{\varepsilon}}\leq m, \label{4.4}
\end{align} 
where $c$ and $m$ are independent of $\varepsilon$.

\noindent
We begin by proving the following concentration-compactness type result.
\begin{lem}\label{prop 4.1}
Assume that $f$ satisfies $(f1)$-$(f3)$ and $(v_{\varepsilon})_{\varepsilon \in(0, \varepsilon_{1}]}$
satisfies $(\ref{4.1})$-$(\ref{4.4})$.
Then there exists a subsequence $\varepsilon_{j}\rightarrow 0$, 
$l\in \N\cup\{0\}$, sequences $(y_{\varepsilon_{j}}^{k})\subset \R^{N}$, $x^{k}\in \Omega$, 
$\omega^{k}\in H^{s}(\R^{N})\setminus\{0\}$ $(k=1,\cdots, l)$ such that
\begin{align}
& |y_{\varepsilon_{j}}^{k} - y_{\varepsilon_{j}}^{k'}|\rightarrow \infty \mbox{ as }
j\rightarrow \infty, \mbox{ for } k\neq k',  \label{4.5} \\
& \varepsilon_{j} y_{\varepsilon_{j}}^{k} \rightarrow x^{k} \in \Omega \mbox{ as }
j\rightarrow \infty, \label{4.6} \\
& \omega^{k} \not\equiv 0 \mbox{ and } \Phi'_{x^{k}}(\omega^{k})=0, \label{4.7} \\
& \left\|v_{\varepsilon_{j}} - \psi_{\varepsilon_{j}}\left(\sum_{k=1}^{l} \omega^{k}(\cdot-y_{\varepsilon_{j}}^{k})\right)
\right\|_{H^{s}_{\varepsilon_{j}}}\rightarrow 0 \mbox{ as } j\rightarrow \infty, \label{4.8} \\
& J_{\varepsilon_{j}}(v_{\varepsilon_{j}})\rightarrow \sum_{k=1}^{l} \Phi_{x^{k}}(\omega^{k}). \label{4.9}
\end{align}
Here $\psi_{\varepsilon}(x)= \psi (\varepsilon x)$, and $\psi \in C_{0}^{\infty}(\R^{N}, \R)$ is such that 
$\psi (x) =1$ for $ x\in \Lambda$ and $0\leq \psi \leq 1$ on $\R^{N}$.
When $l=0$, we have $\|v_{\varepsilon_{j}}\|_{H^{s}_{\varepsilon_{j}}}\rightarrow 0$ and $J_{\varepsilon_{j}}(v_{\varepsilon_{j}})\rightarrow 0$.
\end{lem}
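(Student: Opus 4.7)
The plan is to extract profiles iteratively via a Lions-type concentration-compactness procedure, using Lemma \ref{lionslemma} at each step to detect nonvanishing mass and Lemma \ref{lemma 4.1}(ii) to ensure termination after finitely many extractions.

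\textbf{Initial dichotomy.} From \eqref{4.4} the sequence $(v_\varepsilon)$ is bounded in $H^s(\R^N)$. Consider
\[
\beta:=\limsup_{\varepsilon\to 0}\,\sup_{y\in\R^N}\int_{B_1(y)}|v_\varepsilon|^2\,dx.
\]
If $\beta=0$, then by Lemma \ref{lionslemma} $v_\varepsilon\to 0$ in $L^q(\R^N)$ for every $q\in(2,2^*_s)$. Testing $\langle J'_\varepsilon(v_\varepsilon), v_\varepsilon\rangle=o(1)$ with Corollary \ref{corollario 2.1}(iii) and interpolating yields $\|v_\varepsilon\|_{H^s_\varepsilon}\to 0$ and hence $J_\varepsilon(v_\varepsilon)\to 0$; this is the case $l=0$.

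\textbf{First profile.} If $\beta>0$, extract $\varepsilon_j\to 0$ and $(y^1_{\varepsilon_j})\subset\R^N$ with $\int_{B_1(y^1_{\varepsilon_j})}|v_{\varepsilon_j}|^2\,dx\ge d>0$. Setting $w^1_j(x):=v_{\varepsilon_j}(x+y^1_{\varepsilon_j})$ and passing to a subsequence, $w^1_j\rightharpoonup\omega^1$ in $H^s(\R^N)$ with $\omega^1\neq 0$ by the local $L^2$ compactness. The crucial step is to prove $(\varepsilon_j y^1_{\varepsilon_j})$ is bounded. If $|\varepsilon_j y^1_{\varepsilon_j}|\to\infty$, then either $V(\varepsilon_j y^1_{\varepsilon_j})$ becomes unbounded, which contradicts $\int V(\varepsilon_j(x+y^1_{\varepsilon_j}))(w^1_j)^2\,dx\le M^2$ via the local uniform continuity of $V$ on the shrinking set $\varepsilon_j(K+y^1_{\varepsilon_j})$; or $V$ stays bounded, in which case $\chi_{\varepsilon_j}(x+y^1_{\varepsilon_j})\to 0$ on every compact set (as $\chi$ has compact support in $\Lambda$), the limit equation reads $(-\Delta)^s\omega^1+\tilde V\,\omega^1=\underline f(\omega^1)$ for some $\tilde V\ge V_0$, and $\underline f(t)\le\nu t$ with $\nu<V_0/2$ forces $\omega^1=0$ upon testing against $\omega^1$. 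Hence $\varepsilon_j y^1_{\varepsilon_j}\to x^1\in\R^N$, and passing to the limit in the Euler--Lagrange equation for $w^1_j$ (using Lemma \ref{Strauss} applied to the nonlinearities $f$ and $\underline f$ with $q_1=2$, $q_2=p+1$, together with continuity of $V$ and $\chi$) gives $\Phi'_{x^1}(\omega^1)=0$. Lemma \ref{lemma 4.1}(i) then forces $x^1\in\Omega$.

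\textbf{Iteration and termination.} Define the residual
\[
v^{(2)}_{\varepsilon_j}:=v_{\varepsilon_j}-\psi_{\varepsilon_j}\,\omega^1(\cdot-y^1_{\varepsilon_j}).
\]
The cutoff $\psi_{\varepsilon_j}$ guarantees $\psi_{\varepsilon_j}\omega^1(\cdot-y^1_{\varepsilon_j})\in H^s_{\varepsilon_j}$, and since $\varepsilon_j y^1_{\varepsilon_j}\to x^1\in\Lambda$ where $\psi\equiv 1$, it is asymptotically transparent near the profile. A Brezis--Lieb type splitting for the Gagliardo seminorm, with the commutator between $(-\Delta)^{s/2}$ and $\psi_{\varepsilon_j}$ estimated through Lemma \ref{funlemma} (applied to $w_j=\omega^1(\cdot-y^1_{\varepsilon_j})$), combined with a.e.\ convergence and Lemma \ref{Strauss} on the nonlinear terms, yields
\[
J_{\varepsilon_j}(v^{(2)}_{\varepsilon_j})=J_{\varepsilon_j}(v_{\varepsilon_j})-\Phi_{x^1}(\omega^1)+o(1),\quad (1+\|v^{(2)}_{\varepsilon_j}\|_{H^s_{\varepsilon_j}})\|J'_{\varepsilon_j}(v^{(2)}_{\varepsilon_j})\|_{H^{-s}_{\varepsilon_j}}=o(1),
\]
so $v^{(2)}_{\varepsilon_j}$ again satisfies \eqref{4.1}--\eqref{4.4}. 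Apply the dichotomy to $v^{(2)}_{\varepsilon_j}$: either it vanishes and we stop with $l=1$, or a new triple $(y^2_{\varepsilon_j},\omega^2,x^2)$ is produced. Since $v^{(2)}_{\varepsilon_j}(\cdot+y^1_{\varepsilon_j})\rightharpoonup 0$ by construction, any new concentration must occur at $|y^2_{\varepsilon_j}-y^1_{\varepsilon_j}|\to\infty$, which gives \eqref{4.5}. Iterating, each $\omega^k$ satisfies the uniform lower bound $\|\omega^k\|_{H^s}\ge\delta_1>0$ from Lemma \ref{lemma 4.1}(ii), and the uniform positive lower bound $\Phi_{x^k}(\omega^k)\ge m_0>0$ (as in Step~1 of the proof of continuity of $m(x)$ in Section~4) together with the energy identity forces the procedure to halt after finitely many steps.

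\textbf{Main obstacle.} The subtle point is the analysis at infinity: ruling out $|\varepsilon_j y^k_{\varepsilon_j}|\to\infty$ (so that each concentration center lies in $\Omega$) requires combining the $H^s_\varepsilon$ a priori bound, the compact support of $\chi$ in $\Lambda$, and the subcritical linearity $\underline f(t)\le\nu t$ forced by the choice $\nu<V_0/2$ in \eqref{2.13}. The nonlocal character of $(-\Delta)^s$ further complicates the energy splitting, and Lemma \ref{funlemma} is precisely the tool designed to handle the commutator between the Gagliardo seminorm and the slowly varying cutoff $\psi_{\varepsilon_j}$, which is what makes \eqref{4.8}--\eqref{4.9} work in the fractional framework.
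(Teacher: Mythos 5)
Your proposal correctly identifies the concentration--compactness profile-decomposition strategy, and in broad outline (initial dichotomy via Lemma \ref{lionslemma}, profiles recovered as weak limits of translates, termination via the $\delta_1$ lower bound of Lemma \ref{lemma 4.1}$(ii)$) it matches the paper's argument. However, there are two concrete gaps.

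First, the step ruling out $|\varepsilon_j y^1_{\varepsilon_j}|\to\infty$ is not sound as stated. In your ``unbounded $V$'' case, the appeal to local uniform continuity on the shrinking set $\varepsilon_j(K+y^1_{\varepsilon_j})$ fails because $V$ is only \emph{locally} H\"older continuous: as this set escapes to infinity, there is no uniform modulus of continuity, so $V(\varepsilon_j y^1_{\varepsilon_j})\to\infty$ does not force $V$ to be large on the translated compact set, and the contradiction with the $H^s_\varepsilon$-bound does not follow. In your ``bounded $V$'' case, a pointwise limit $\tilde V$ of $V(\varepsilon_j(\cdot+y^1_{\varepsilon_j}))$ need not exist, even along a subsequence, so passing to a genuine limit equation with coefficient $\tilde V$ is unjustified. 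The paper sidesteps both issues by never attempting a limit equation for the potential term: it tests $\langle J'_\varepsilon(v_\varepsilon),\varphi_R(\cdot-z_\varepsilon)v_\varepsilon\rangle=o(1)$, uses only the lower bound $V\geq V_0$ to discard the $V$-integral in a one-sided inequality, notes that $g=\underline f$ on $\supp\varphi_R(\cdot-z_\varepsilon)$ for small $\varepsilon$ since $|\varepsilon z_\varepsilon|\to\infty$, and then sends $\varepsilon\to 0$, $R\to\infty$ via Fatou and Lemma \ref{funlemma} to conclude $[\omega^{k+1}]^2+\int(V_0-\nu)(\omega^{k+1})^2\leq 0$, forcing $\omega^{k+1}\equiv 0$. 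No case split on $V$ is needed.

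Second, your iteration hinges on the claim that the residual $v^{(2)}_{\varepsilon_j}=v_{\varepsilon_j}-\psi_{\varepsilon_j}\omega^1(\cdot-y^1_{\varepsilon_j})$ again satisfies a Cerami condition $(1+\|v^{(2)}_{\varepsilon_j}\|_{H^s_{\varepsilon_j}})\|J'_{\varepsilon_j}(v^{(2)}_{\varepsilon_j})\|_{H^{-s}_{\varepsilon_j}}=o(1)$. This is not a soft consequence of a Brezis--Lieb splitting: the nonlinear terms in $J'_\varepsilon$ do not decouple additively, and the verification would require a separate, nontrivial argument. The paper's proof avoids needing this entirely. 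In Step $3$, the new profile $\omega^{k+1}$ is defined as the weak limit of $v_\varepsilon(\cdot+z_\varepsilon)$ (not of the translated residual), and its criticality $\Phi'_{x^{k+1}}(\omega^{k+1})=0$ follows directly from $\langle J'_\varepsilon(v_\varepsilon),\varphi(\cdot-z_\varepsilon)\rangle\to 0$ with $\varphi\in C^\infty_0(\R^N)$, i.e., the Cerami condition on the \emph{original} sequence. The residual only enters through the $L^2$-mass criterion \eqref{4.18}, which determines where to look for a new center; no derivative estimate on the residual is required. Likewise, \eqref{4.9} is deduced at the end as a standard consequence of \eqref{4.5}--\eqref{4.8}, rather than being carried along inside the iteration. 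Reorganizing your iteration along these lines would close the gap.
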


\begin{remark}
Let us note that $\sup \psi(\varepsilon x)V(\varepsilon x)<\infty$. Moreover, for all $w\in H^{s}(\R^{N})$, $\psi_{\varepsilon} w\in H^{s}_{\varepsilon}$ and there exists
a constant $C>0$, independent of $\varepsilon$, such that
\begin{equation}\label{4.10}
\|\psi_{\varepsilon} w\|_{H^{s}_{\varepsilon}}\leq C\|w\|_{H^{s}}.
\end{equation}
\end{remark}
\begin{remark}
For any $\omega\in H^{s}(\R^{N})$ and for any sequence $(y_{\varepsilon})\subset \R^{N}$
such that $\varepsilon y_{\varepsilon} \rightarrow x_{0}\in \Lambda$,
we have
\begin{align}\label{psiomega}
& \|\psi_{\varepsilon} \omega(\cdot-y_{\varepsilon})\|_{H^{s}_{\varepsilon}}^{2} \nonumber\\
& =\int_{\R^{N}} |(-\Delta)^{\frac{s}{2}}(\psi(\varepsilon x + \varepsilon y_{\varepsilon})\omega(x))|^{2} +
V(\varepsilon x + \varepsilon y_{\varepsilon}) \psi(\varepsilon x + \varepsilon y_{\varepsilon})^{2}
\omega(x)^{2} \, dx \nonumber\\
& \rightarrow \int_{\R^{N}} |(-\Delta)^{\frac{s}{2}} \omega|^{2} + V(x_{0}) \omega^{2} \, dx = \bigl\bracevert 
\omega\bigr\bracevert_{x_{0}}^{2} \quad
\mbox{ as } \varepsilon\rightarrow 0.
\end{align}
We first prove that
\begin{align}\label{terry1}
\int_{\R^{N}} |(-\Delta)^{\frac{s}{2}}(\psi(\varepsilon x + \varepsilon y_{\varepsilon})\omega(x))|^{2} \, dx\rightarrow \int_{\R^{N}} |(-\Delta)^{\frac{s}{2}} \omega|^{2}\, dx.
\end{align}
Thus
\begin{align*}
&\iint_{\R^{2N}} \frac{|\psi(\varepsilon x + \varepsilon y_{\varepsilon})\omega(x)-\psi(\varepsilon y + \varepsilon y_{\varepsilon})\omega(y)|^{2}}{|x-y|^{N+2s}}\, dx dy \\
&=\iint_{\R^{2N}} \frac{|\psi(\varepsilon x + \varepsilon y_{\varepsilon})-\psi(\varepsilon y + \varepsilon y_{\varepsilon})|^{2}}{|x-y|^{N+2s}} (\omega(x))^{2}\, dx dy \\
&+\iint_{\R^{2N}} \frac{|\omega(x)-\omega(y)|^{2}}{|x-y|^{N+2s}} (\psi(\e y+\e y_{\e}))^{2}\, dx dy \\
&+2\iint_{\R^{2N}} \frac{(\psi(\varepsilon x + \varepsilon y_{\varepsilon})-\psi(\varepsilon y + \varepsilon y_{\varepsilon}))(\omega(x)-\omega(y))}{|x-y|^{N+2s}} \omega(x) \psi(\e y+\e y_{\e})\, dx dy \\
&=:A_{\e}+B_{\e}+2C_{\e}.
\end{align*}
Now, by the dominated convergence theorem and $\psi(\e \cdot+\e y_{\e})\rightarrow 1$, we get $B_{\e}\rightarrow [\omega]^{2}$. 
On the other hand
\begin{align}\label{terry}
A_{\e}&=\int_{\R^{N}} \, dx \int_{|x-y|\leq \frac{1}{\e}} \frac{|\psi(\varepsilon x + \varepsilon y_{\varepsilon})-\psi(\varepsilon y + \varepsilon y_{\varepsilon})|^{2}}{|x-y|^{N+2s}} (\omega(x))^{2}\, dy \nonumber \\
&+\int_{\R^{N}} \, dx\int_{|x-y|>\frac{1}{\e}} \frac{|\psi(\varepsilon x + \varepsilon y_{\varepsilon})-\psi(\varepsilon y + \varepsilon y_{\varepsilon})|^{2}}{|x-y|^{N+2s}} (\omega(x))^{2}\, dy \nonumber  \\
&\leq \e^{2}\|\nabla \psi\|^{2}_{L^{\infty}(\R^{N})}\alpha_{N-1}\int_{\R^{N}} \omega^{2} \, dx \int_{0}^{\frac{1}{\e}} \frac{1}{z^{2s-1}} \, dz \nonumber\\
&+4 \alpha_{N-1} \int_{\R^{N}} \omega^{2} \, dx \int_{\frac{1}{\e}}^{\infty} \frac{1}{z^{2s+1}} \, dz \nonumber \\
&=\e^{2s}\alpha_{N-1} \left(\frac{\|\nabla \psi\|^{2}_{L^{\infty}(\R^{N})}}{2-2s}+\frac{2}{s}\right)\int_{\R^{N}} \omega^{2} \, dx\rightarrow 0 \quad \mbox{ as } \e\rightarrow 0,
\end{align}
and using
\begin{align*}
|C_{\e}|\leq [\omega] \sqrt{A_{\e}}\rightarrow 0,
\end{align*}
we can infer that $(\ref{terry1})$ holds.
Since it is clear that 
\begin{align}\label{terry2}
\int_{\R^{N}}V(\varepsilon x + \varepsilon y_{\varepsilon}) \psi(\varepsilon x + \varepsilon y_{\varepsilon})^{2}
\omega(x)^{2} \, dx \rightarrow \int_{\R^{N}} V(x_{0}) \omega^{2} \, dx,
\end{align}
we deduce that $(\ref{terry1})$ and $(\ref{terry2})$ imply $(\ref{psiomega})$.
\end{remark}

\begin{proof}
We divide the proof in several steps. In what follows, we write $\varepsilon$ instead of $\varepsilon_{j}$.

\begin{description}
\item [Step $1$] Up to subsequence, $v_{\varepsilon} \rightharpoonup v_{0}$ in $H^{s}(\R^{N})$
and  $v_{0}$ is a critical point of $\Phi_{0}(v)$. 
\end{description}

\noindent
Using (\ref{4.4}) and (\ref{2.18}), we can see that $\|v_{\varepsilon}\|_{H^{s}}\leq m$. Then $(v_{\varepsilon})$ is bounded in $H^{s}(\R^{N})$ and  we can suppose that $v_{\varepsilon} \rightharpoonup v_{0}$ in 
$H^{s}(\R^{N})$. 

Let us show that $v_{0}$ is a critical point of $\Phi_{0}(v)$, that is $\langle \Phi'_{0}(v_{0}), \varphi\rangle=0$ for any $\varphi\in H^{s}(\R^{N})$.
Since $C^{\infty}_{0}(\R^{N})$ is dense in $H^{s}(\R^{N})$, it is enough to prove it for any $\varphi \in C^{\infty}_{0}(\R^{N})$.
Fix $\varphi \in C^{\infty}_{0}(\R^{N})$. From (\ref{4.3}) it follows that 
$$
\int_{\R^{N}} [(-\Delta)^{\frac{s}{2}} v_{\varepsilon} (-\Delta)^{\frac{s}{2}} \varphi + V(\varepsilon x) v_{\varepsilon} 
\varphi - g(\varepsilon x, v_{\varepsilon})
\varphi] \, dx \rightarrow 0 .
$$
Now we show that
\begin{align*}
\langle J'_{\varepsilon}(v_{\varepsilon}),\varphi \rangle &= \langle v_{\varepsilon}, \varphi\rangle_{H^{s}_{\varepsilon}} - 
\int_{\R^{N}} g(\varepsilon x, v_{\varepsilon}) \varphi \,dx  \rightarrow \langle v_{0}, \varphi \rangle_{0} - \int_{\R^{N}} g(0, v_{0}) \varphi \,dx. 
\end{align*}
Let us note that
\begin{align*}
&\langle v_{\varepsilon}, \varphi \rangle_{H^{s}_{\varepsilon}} - \langle v_{0}, \varphi \rangle_{0} \\
&= \int_{\R^{N}} (-\Delta)^{\frac{s}{2}} (v_{\varepsilon} - v_{0}) (-\Delta)^{\frac{s}{2}} \varphi \, dx + \int_{\R^{N}} [V(\varepsilon x)- V(0)] v_{\varepsilon} \varphi \, dx \\
&\quad + V(0)\int_{\R^{N}} (v_{\varepsilon}-v_{0}) \varphi \, dx \\
&=: (I)+(II)+(III).  
\end{align*}
Then $(I),(III)\rightarrow 0$ because of $v_{\varepsilon}\rightharpoonup v_{0}$ in $H^{s}(\R^{N})$,
and
\begin{align*}
|(II)|&\leq C \|V_{\varepsilon} - V(0)\|_{L^{\infty}(\supp \varphi)} \|v_{\varepsilon}\|_{H^{s}}
\|\varphi\|_{L^{2}(\R^{N})} \\
&\leq C' \|V_{\varepsilon} - V(0)\|_{L^{\infty}(\supp \varphi)}\rightarrow 0.
\end{align*}
On the other hand, using $(iii)$ of Corollary \ref{corollario 2.1} and $H^{s}(\R^{N})\Subset L^{q}_{loc}(\R^{N})$ for any $q\in [2, 2^{*}_{s})$, we have
$$
\int_{\R^{N}} g(\varepsilon x, v_{\varepsilon}) \varphi \, dx \rightarrow \int_{\R^{N}} g(0, v_{0}) \varphi \, dx. 
$$
Hence 
$$
\langle \Phi'_{0}(v_{0}),\varphi \rangle= \int_{\R^{N}} (-\Delta)^{\frac{s}{2}} v_{0} (-\Delta)^{\frac{s}{2}}  \varphi + V(0) v_{0} \varphi - g(0, v_{0})\varphi \, dx =0. 
$$

If $v_{0}\not\equiv 0$, we set $y^{1}_{\varepsilon}=0$ and $\omega^{1}=v_{0}$.

\begin{description}
\item [Step $2$] Suppose that there exist $ n\in \N\cup \{0\}, (y_{\varepsilon}^{k})\subset \R^{N},
x^{k}\in \Omega, \omega^{k}\in H^{s}(\R^{N})$ $(k=1, \ldots , n)$ such that (\ref{4.5}), (\ref{4.6}),
(\ref{4.7}) of Lemma \ref{prop 4.1} hold for $k=1, \ldots, n$ and
\begin{equation}\label{4.11}
v_{\varepsilon}(\cdot+y_{\varepsilon}^{k})\rightharpoonup \omega^{k} \mbox{ in } H^{s}(\R^{N}) \mbox{ for } k=1, \ldots, n.
\end{equation}
Moreover, we assume that 
\begin{equation}\label{4.12}
\sup_{y\in \R^{N}} \int_{B_{1}(y)} \Bigl|v_{\varepsilon} - \psi_{\varepsilon}\sum_{k=1}^{n} \omega^{k}
(x-y_{\varepsilon}^{k})\Bigr|^{2} \, dx \rightarrow 0.
\end{equation}
Then
\begin{equation}\label{4.13}
\Bigl \|v_{\varepsilon}- \psi_{\varepsilon}\sum_{k=1}^{n} \omega^{k}(\cdot-y_{\varepsilon}^{k}) \Bigr \|
_{H^{s}_{\varepsilon}}^{2} \rightarrow 0.
\end{equation}
\end{description}

\noindent
Set 
$$
\xi_{\varepsilon}(x)=v_{\varepsilon}(x)-  \psi_{\varepsilon}(x)\sum_{k=1}^{n} 
\omega^{k}(x-y_{\varepsilon}^{k}).
$$
From (\ref{4.10}) it follows that
\begin{align*}
\|\xi_{\varepsilon}\|_{H^{s}_{\varepsilon}} &\leq \|v_{\varepsilon}\|_{H^{s}_{\varepsilon}}+ 
\|\psi_{\varepsilon}\sum_{k=1}^{n} \omega^{k}(\cdot-y_{\varepsilon}^{k})\|_{H^{s}_{\varepsilon}} \\
&\leq m + C \sum_{k=1}^{n} \|\omega^{k}\|_{H^{s}},
\end{align*}
and being $\|\xi_{\varepsilon}\|_{H^{s}}\leq \|\xi_{\varepsilon}\|_{H^{s}_{\varepsilon}}$,
we deduce that $(\xi_{\varepsilon})$ is bounded in $H^{s}(\R^{N})$. 

By (\ref{4.12}) and Lemma \ref{lionslemma} we have $\|\xi_{\varepsilon}\|_{L^{p+1}(\R^{N})}
\rightarrow 0$ as $\varepsilon \rightarrow 0$.
Now, a direct calculation shows that 
\begin{align}
\|\xi_{\varepsilon}\|_{H^{s}_{\varepsilon}}^{2} &= \langle v_{\varepsilon}- \psi_{\varepsilon} \sum_{k=1}^{n} 
\omega^{k} (\cdot-y_{\varepsilon}^{k}), \xi_{\varepsilon}\rangle_{H^{s}_{\varepsilon}} \nonumber \\
&=\langle v_{\varepsilon}, \xi_{\varepsilon} \rangle_{H_{\varepsilon}} - \sum_{k=1}^{n} \langle\psi_{\varepsilon}
\omega^{k} (\cdot-y_{\varepsilon}^{k}), \xi_{\varepsilon}\rangle_{H^{s}_{\varepsilon}}. \label{4.14}
\end{align}
We aim to prove that for all $k=1, \dots, n$
\begin{equation}\label{4.15}
\langle \psi_{\varepsilon}\omega^{k}(\cdot-y_{\varepsilon}^{k}), \xi_{\varepsilon}\rangle_{H^{s}_{\varepsilon}} = 
\langle \omega^{k}(\cdot-y_{\varepsilon}^{k}), \psi_{\varepsilon}\xi_{\varepsilon}\rangle_{x^{k}}+ o(1).
\end{equation}
Indeed
\begin{align*}
&\langle \psi_{\varepsilon}\omega^{k}(\cdot-y_{\varepsilon}^{k}), \xi_{\varepsilon}\rangle_{H^{s}_{\varepsilon}} -
\langle\omega^{k}(\cdot-y_{\varepsilon}^{k}), \psi_{\varepsilon}\xi_{\varepsilon}\rangle_{x^{k}} \\
&=\Bigl[\iint_{\R^{2N}}  \frac{(\psi_{\varepsilon}(x)-\psi_{\varepsilon}(y)) (\xi_{\varepsilon}(x)-\xi_{\e}(y)) \omega^{k}(x-y_{\varepsilon}^{k})}{|x-y|^{N+2s}} \, dx dy \\
&-\iint_{\R^{2N}}  \frac{(\psi_{\varepsilon}(x)-\psi_{\varepsilon}(y)) (\omega^{k}(x-y_{\varepsilon}^{k})-\omega^{k}(y-y_{\e}^{k})) \xi_{\e}(x)}{|x-y|^{N+2s}} \, dx dy\Bigr] \\
& +\int_{\R^{N}} (V(\varepsilon x + \varepsilon y_{\varepsilon}^{k})-V(x^{k})) 
\psi(\varepsilon x+\varepsilon y_{\varepsilon}^{k}) \omega^{k}(x) \xi_{\varepsilon}(x+y_{\varepsilon}^{k}) \,dx\\
&=:(I)+(II).
\end{align*}
We note that 
\begin{align*}
&\left|\iint_{\R^{2N}}  \frac{(\psi_{\varepsilon}(x)-\psi_{\varepsilon}(y)) (\xi_{\varepsilon}(x)-\xi_{\e}(y)) \omega^{k}(x-y_{\varepsilon}^{k})}{|x-y|^{N+2s}} \, dx dy \right| \\
&\quad \leq \left(\iint_{\R^{2N}}  \frac{|\xi_{\varepsilon}(x)-\xi_{\e}(y)|^{2}}{|x-y|^{N+2s}} \, dx dy\right)^{\frac{1}{2}} \times \\
&\quad \times \left(\iint_{\R^{2N}}  \frac{|\psi_{\varepsilon}(x)-\psi_{\varepsilon}(y)|^{2} (\omega^{k}(x-y_{\varepsilon}^{k}))^{2}}{|x-y|^{N+2s}} \, dx dy \right)^{\frac{1}{2}}
\end{align*}
and
\begin{align*}
&\left|\iint_{\R^{2N}}  \frac{(\psi_{\varepsilon}(x)-\psi_{\varepsilon}(y)) (\omega^{k}(x-y_{\varepsilon}^{k})-\omega^{k}(y-y_{\e}^{k})) \xi_{\e}(x)}{|x-y|^{N+2s}} \, dx dy \right| \\
&\leq \left(\iint_{\R^{2N}}  \frac{|\omega^{k}(x-y_{\varepsilon}^{k})-\omega^{k}(y-y_{\varepsilon}^{k})|^{2}}{|x-y|^{N+2s}} \, dx dy\right)^{\frac{1}{2}}\times \\
&\quad \times \left(\iint_{\R^{2N}}  \xi^{2}_{\e}(x) \frac{|\psi_{\varepsilon}(x)-\psi_{\varepsilon}(y)|^{2}}{|x-y|^{N+2s}} \, dx dy \right)^{\frac{1}{2}}
\end{align*}
so, using the fact that $\|\xi_{\varepsilon}\|_{H^{s}}\leq \overline{C}_{1}$ and $\|\omega^{k}\|_{H^{s}}\leq \overline{C}_{2}$, for some $\bar{C}_{1}, \bar{C}_{2} >0$, we can argue as in the proof of (\ref{terry}) to see that $(I)\rightarrow 0$. We note that $(V(\varepsilon x+\varepsilon y_{\varepsilon}^{k})-V(x^{k}))\psi(\varepsilon x+\varepsilon y_{\varepsilon}^{k})$ is bounded in $L^{\infty}(\mathbb{R}^{N})$. By 
(\ref{4.5}) and (\ref{4.11}) we can deduce that 
\begin{equation}\begin{split}\label{4.17}
\xi_{\varepsilon} (\cdot+y_{\varepsilon}^{k}) \rightharpoonup 0 \mbox{ in } H^{s}(\R^{N}) \\
\xi_{\varepsilon} (\cdot+y_{\varepsilon}^{k}) \rightarrow 0 \mbox{ in } L^{2}_{loc}(\R^{N}).  
\end{split}\end{equation}
Then $(II)\rightarrow 0$ and we can conclude that \eqref{4.15} holds.

Putting together (\ref{4.14}) and (\ref{4.15}) we find
\begin{align*}
\| \xi_{\varepsilon}\|_{H^{s}_{\varepsilon}}^{2} &= \langle v_{\varepsilon}, \xi_{\varepsilon}\rangle_{H^{s}_{\varepsilon}} -
\sum_{k=1}^{n} \langle\omega^{k}(\cdot-y_{\varepsilon}^{k}), \psi_{\varepsilon}\xi_{\varepsilon}\rangle_{x^{k}} +o(1) \\
&= \langle J'_{\varepsilon}(v_{\varepsilon}), \xi_{\varepsilon}\rangle + \int_{\R^{N}} g(\varepsilon x, v_{\varepsilon})
\xi_{\varepsilon} \, dx - \sum_{k=1}^{n} \Bigl(\langle \Phi'_{x^{k}} (\omega^{k}(\cdot-y_{\varepsilon}^{k})),\psi_{\varepsilon} 
\xi_{\varepsilon}\rangle \\
&+ \int_{\R^{N}} g(x^{k}, \omega^{k}(x-y_{\varepsilon}^{k})) \psi_{\varepsilon} 
\xi_{\varepsilon} \, dx \Bigr) + o(1) \\
&= \int_{\R^{N}} g(\varepsilon x, v_{\varepsilon}) \xi_{\varepsilon} \,dx - \sum_{k=1}^{n} \int_{\R^{N}} 
g(x^{k}, \omega^{k}(x-y_{\varepsilon}^{k}))\psi_{\varepsilon} \xi_{\varepsilon} \, dx + o(1) \\
&=(III)- \sum_{k=1}^{n} (IV) +o(1).
\end{align*} 
By Corollary \ref{corollario 2.1}-$(iii)$ we have  
\begin{align*}
|(III)| &\leq  \delta \int_{\R^{N}} |v_{\varepsilon} \xi_{\varepsilon}| \, dx+ 
C_{\delta} \int_{\R^{N}} |v_{\varepsilon}|^{p}|\xi_{\varepsilon}| \, dx\\
&\leq \delta \|v_{\varepsilon}\|_{L^{2}(\R^{N})} \|\xi_{\varepsilon}\|_{L^{2}(\R^{N})}+ 
C_{\delta} \| v_{\varepsilon}\|_{L^{p+1}(\R^{N})}^{p} \|\xi_{\varepsilon}\|_{L^{p+1}(\R^{N})}
\end{align*}
and using $\|\xi_{\varepsilon}\|_{L^{p+1}(\R^{N})} \rightarrow 0$ as $\varepsilon\rightarrow 0$, the boundedness of
$\|v_{\varepsilon}\|_{L^{2}(\R^{N})}$ and $\|\xi_{\varepsilon}\|_{L^{2}(\R^{N})}$, and the arbitrariness of $\delta$,
we get $(III)\rightarrow 0$.
In view of (\ref{4.17}) we can see that $(IV)\rightarrow 0$. Hence $\|\xi_{\varepsilon}\|_{H^{s}_{\varepsilon}}\rightarrow 0$ and (\ref{4.13}) holds.

\begin{description}
\item [Step $3$] Suppose that there exist $n\in \N\cup\{0\}, (y^{k}_{\varepsilon})\subset \R^{N},
x^{k}\in \Omega, \omega^{k}\in H^{s}(\R^{N})\setminus\{0\}$ $(k=1, \ldots, n)$ such that (\ref{4.5}),(\ref{4.6}),
(\ref{4.7}) and (\ref{4.11}) hold. We also assume that 
there exists $z_{\varepsilon}\in \R^{N}$ such that
\begin{equation}\label{4.18}
\int_{B_{1}(z_{\varepsilon})} \Bigl|v_{\varepsilon}- \psi_{\varepsilon} \sum_{k=1}^{n} \omega^{k} 
(x-y_{\varepsilon}^{k})\Bigr|^{2} \, dx \rightarrow c>0. 
\end{equation}
Then there exist $x^{k+1}\in \Omega \mbox{ and } \omega^{k+1}\in H^{s}(\R^{N})\setminus\{0\}$ such that
\begin{align}
& |z_{\varepsilon} -y_{\varepsilon}^{k}|\rightarrow \infty \quad \mbox{ for all } k=1, \ldots , n \label{4.19}, \\
& \varepsilon z_{\varepsilon} \rightarrow x^{k+1} \in \Omega \label{4.20}, \\
& v_{\varepsilon}(\cdot+z_{\varepsilon}) \rightharpoonup \omega^{k+1}\not\equiv 0 \mbox{ in } H^{s}(\R^{N}) \label{4.21}, \\
& \Phi '_{x^{k+1}} (\omega^{k+1})=0 \label{4.22}.
\end{align}
\end{description}
It is standard to prove that $z_{\varepsilon}$ satisfies (\ref{4.19}) and that there exists $\omega^{k+1}\in H^{s}(\R^{N})\setminus \{0\}$ satisfying \eqref{4.21}.

Now we show (\ref{4.20}). Firstly, we prove that 
$\limsup_{\varepsilon \rightarrow 0} |\varepsilon z_{\varepsilon}|<\infty$.
Assume by contradiction that $|\varepsilon z_{\varepsilon}| \rightarrow \infty$.
Let $\varphi \in C_{0}^{\infty}(\R^{N})$ be a cut-off function such that $\varphi\geq 0$, $\varphi(0)=1$ and let $\varphi_{R}(x)=\varphi(x/R)$. Since $(\varphi_{R}(\cdot-z_{\varepsilon})v_{\varepsilon})$ is bounded in $H^{s}_{\e}$, we obtain  
$$
\langle J'_{\varepsilon}(v_{\varepsilon}), \varphi_{R}(\cdot-z_{\varepsilon})v_{\varepsilon} \rangle \rightarrow 0 \quad \mbox{ as } \varepsilon \rightarrow 0,
$$
that is 
\begin{align}\label{4.23}
&\int_{\R^{N}} \!\!\!(-\Delta)^{\frac{s}{2}} v_{\varepsilon} (x+z_{\varepsilon}) (-\Delta)^{\frac{s}{2}} (\varphi_{R}(x)v_{\varepsilon}(x+z_{\varepsilon}))\!+\!V(\varepsilon x + \varepsilon z_{\varepsilon}) v^{2}_{\varepsilon}(x+z_{\varepsilon}) \varphi_{R}(x) dx \nonumber\\
&-\int_{\R^{N}} g(\varepsilon x + \varepsilon z_{\varepsilon}, v_{\varepsilon}(x+z_{\varepsilon}))
v_{\varepsilon}(x+z_{\varepsilon})\varphi_{R}(x) \,dx \rightarrow 0 .
\end{align}
Let us note that $|\varepsilon z_{\varepsilon}| \rightarrow \infty$ yields
\begin{align*}
g(\varepsilon x + \varepsilon z_{\varepsilon}, v_{\varepsilon}(x+z_{\varepsilon})) 
= \underline{f}(v_{\varepsilon}(x+z_{\varepsilon})) \mbox{ on } \supp \varphi_{R} 
\end{align*}
for any $\varepsilon$ small enough. Moreover, $\varphi_{R}(x)\rightarrow 1$ as $R\rightarrow \infty$ and 
$$
|\underline{f}(\omega^{k+1})\omega^{k+1} \varphi_{R}|\leq C_{1} |\omega^{k+1}|^{2}+C_{2} |\omega^{k+1}|^{p+1}\in L^{1}(\R^{N}).
$$
in view of Lemma \ref{lemma 2.2}-$(iii)$ and Lemma \ref{lemma 2.1}-$(i)$.
Hence, by invoking the dominated convergence theorem we infer that
\begin{align}\label{fbar}
\lim_{R\rightarrow \infty}\lim_{\e\rightarrow 0} &\int_{\R^{N}} g(\varepsilon x + \varepsilon z_{\varepsilon}, v_{\varepsilon}(x+z_{\varepsilon}))
v_{\varepsilon}(x+z_{\varepsilon})\varphi_{R}(x) \,dx \nonumber \\
&=\lim_{R\rightarrow \infty} \int_{\R^{N}} \underline{f}(\omega^{k+1})\omega^{k+1} \varphi_{R} \, dx \nonumber\\
&= \int_{\R^{N}} \underline{f}(\omega^{k+1})\omega^{k+1} \, dx.
\end{align}
On the other hand, using (\ref{4.21}), H\"older's inequality and Lemma \ref{funlemma} (with $\eta_{R}=1-\varphi_{R}$), we can see that
\begin{equation}\label{4.24}
\lim_{R\rightarrow \infty}\limsup_{\e\rightarrow 0}\iint_{\R^{N}} \frac{(v_{\e}(x+z_{\e})-v_{\e}(y+z_{\e}))(\varphi_{R}(x)-\varphi_{R}(y))}{|x-y|^{N+2s}} v_{\e}(y+z_{\e}) \, dx dy=0,
\end{equation}
and applying Fatou's Lemma and (\ref{4.21}), we get 
\begin{align}\label{4.25}
\lim_{R\rightarrow \infty}\liminf_{\varepsilon \rightarrow 0} &\iint_{\R^{2N}} \frac{|v_{\varepsilon}(x+z_{\varepsilon})-v_{\varepsilon}(y+z_{\varepsilon})|^{2}}{|x-y|^{N+2s}} \varphi_{R}(x) \, dx dy \nonumber \\
&\geq \iint_{\R^{2N}} \frac{|\omega^{k+1}(x)-\omega^{k+1}(y)|^{2}}{|x-y|^{N+2s}} \, dx dy. 
\end{align}
Taking into account (\ref{4.23}), (\ref{fbar}), (\ref{4.24}) and (\ref{4.25}), we deduce that
\begin{equation}\label{4.27}
\iint_{\R^{2N}} \frac{|\omega^{k+1}(x)-\omega^{k+1}(y)|^{2}}{|x-y|^{N+2s}} \, dx dy+ \int_{\R^{N}} V_{0}(\omega^{k+1})^{2}-\underline{f}(\omega^{k+1})\omega^{k+1} \, dx\leq 0.
\end{equation}
By Lemma \ref{lemma 2.2} $(i)$-$(ii)$ and (\ref{4.27}), we have 
$$
\iint_{\R^{2N}} \frac{|\omega^{k+1}(x)-\omega^{k+1}(y)|^{2}}{|x-y|^{N+2s}} \, dx dy+ \int_{\R^{N}} (V_{0}-\nu)(\omega^{k+1})^{2} \, dx\leq 0.
$$
Since $V_{0}>\nu$, we infer that  $\omega^{k+1}\equiv 0$, which contradicts (\ref{4.21}). 

Then, $\limsup_{\varepsilon \rightarrow 0} |\varepsilon z_{\varepsilon}|
<\infty$ and there exists $x^{k+1}\in \R^{N}$ such that $\varepsilon z_{\varepsilon} \rightarrow x^{k+1}$.
This and the fact that $\langle J'_{\varepsilon}(v_{\varepsilon}), \varphi(\cdot-z_{\varepsilon})\rangle \rightarrow 0$ for any $\varphi\in C^{\infty}_{0}(\R^{N})$, gives $\Phi'_{x^{k+1}}(\omega^{k+1})=0$. Since $\omega^{k+1} \not\equiv 0$, it follows that $x^{k+1} \in \Omega$ by Lemma \ref{lemma 4.1} $(i)$.  

\begin{description}
\item [Step $4$] Conclusion.
\end{description}

\noindent
Let us suppose that $v_{0}\not\equiv 0$. Then we set $y_{\varepsilon}^{1}=0$, $x^{1}=0$, $\omega^{1}=v_{0}$.

If $\|v_{\varepsilon}-\psi_{\varepsilon}\omega^{1}\|_{H^{s}_{\varepsilon}}\rightarrow 0$, then
(\ref{4.5})-(\ref{4.8}) are satisfied by $0\in \Omega$, $v_{0}\not\equiv 0$ and $\Phi'_{0}(v_{0})=0$.

If $\|v_{\varepsilon}-\psi_{\varepsilon}\omega^{1}\|_{H^{s}_{\varepsilon}}$ does not converge to $0$,  
then (\ref{4.12}) in Step $2$ does not occur, and there exists $(z_{\varepsilon})$ satisfying (\ref{4.18}) in Step $3$. In view of Step $3$, there exist $x^{2}, \omega^{2}$  satisfying (\ref{4.19})-(\ref{4.22}). 
Then we set $y_{\varepsilon}^{2}=z_{\varepsilon}$.
If $\|v_{\varepsilon} - \psi_{\varepsilon}(\omega^{1} + \omega^{2}(\cdot-y_{\varepsilon}^{2}))\|_{H^{s}_{\varepsilon}}
\rightarrow 0$ then (\ref{4.5})-(\ref{4.8}) hold because of
$|y_{\varepsilon}^{2} - y_{\varepsilon}^{1}| = |z_{\varepsilon}|\rightarrow \infty$, 
$\varepsilon y_{\varepsilon}^{2} \rightarrow x^{2}\in \Omega$ and 
$\Phi'_{x^{2}}(\omega^{2})=0$. Otherwise, we can use Step $2$ and $3$ to continue this procedure. 

Now we assume that $v_{0}\equiv0$. If $\|v_{\varepsilon}\|_{H^{s}_{\varepsilon}}\rightarrow 0$, 
we have done. 
Otherwise, condition (\ref{4.12}) in Step $2$ does not occur, and we can find $(z_{\varepsilon})$ satisfying (\ref{4.18}) in Step $3$.
Applying Step $3$, there exist $x^{1}$ and $\omega^{1}$ satisfying (\ref{4.19})-(\ref{4.22}). Thus, we set $y_{\varepsilon}^{1}=z_{\varepsilon}$.

At this point, we aim to show that this process ends after a finite numbers of steps.
Firstly, we show that under assumptions (\ref{4.5})-(\ref{4.7}) and (\ref{4.11})
\begin{equation}\label{4.28}
\lim_{\varepsilon \rightarrow 0} \left\|v_{\varepsilon} - \psi_{\varepsilon} 
\sum_{k=1}^{n} \omega^{k}(\cdot-y_{\varepsilon}^{k})\right\|_{H^{s}_{\varepsilon}}^{2} = 
\lim_{\varepsilon \rightarrow 0 } \|v_{\varepsilon}\|_{H^{s}_{\varepsilon}}^{2} - 
\sum_{k=1}^{n} \bigl\bracevert \omega^{k}\bigr\bracevert _{x^{k}}^{2}.
\end{equation}
Let us note that
\begin{align}\label{4.29}
&\left\|v_{\varepsilon} - \psi_{\varepsilon} \sum_{k=1}^{n} \omega^{k}(\cdot-y_{\varepsilon}^{k})\right\|_{H^{s}_{\varepsilon}}^{2} \nonumber \\
&=\|v_{\varepsilon}\|_{H^{s}_{\varepsilon}}^{2} - 2 \sum_{k=1}^{n} \langle v_{\varepsilon}, \psi_{\varepsilon} 
\omega^{k}(\cdot-y_{\varepsilon}^{k}) \rangle_{H^{s}_{\varepsilon}} 
+ \sum_{k, k'} \langle \psi_{\varepsilon} 
\omega^{k}(\cdot-y_{\varepsilon}^{k}), \psi_{\varepsilon} \omega^{k'}(\cdot-y_{\varepsilon}^{k'})\rangle_{H^{s}_{\varepsilon}}.
\end{align}
Now we show that
\begin{align}\label{4.30}
\langle v_{\varepsilon} &, \psi_{\varepsilon} \omega^{k}(\cdot-y_{\varepsilon}^{k}) \rangle_{H^{s}_{\varepsilon}}\rightarrow \int_{\R^{N}} |(-\Delta)^{\frac{s}{2}} \omega^{k}|^{2} + V(x^{k})(\omega^{k})^{2} \,dx = 
\bigl\bracevert \omega^{k}\bigr\bracevert _{x^{k}}^{2}. 
\end{align}
In fact
\begin{align*}
&\langle v_{\varepsilon}, \psi_{\varepsilon} \omega^{k}(\cdot-y_{\varepsilon}^{k}) \rangle_{H^{s}_{\varepsilon}}\\
&=\iint_{\R^{2N}} \frac{(v_{\e}(x+y^{k}_{\e})-v_{\e}(y+y^{k}_{\e}))(\psi_{\e}(x+y_{\e}^{k})-\psi_{\e}(y+y_{\e}^{k}))}{|x-y|^{N+2s}} \omega^{k}(x) \, dx dy \\
&\quad +\iint_{\R^{2N}} \frac{(v_{\e}(x+y^{k}_{\e})-v_{\e}(y+y^{k}_{\e}))(\omega^{k}(x)-\omega^{k}(y))}{|x-y|^{N+2s}} \psi_{\e}(y+y_{\e}^{k}) \, dx dy \\
&\quad+\int_{\R^{N}} V(\e x+\e y_{\e}^{k}) \psi_{\e}(x+y_{\e}^{k}) v_{\e}(x+y_{\e}^{k}) \omega^{k}(x) \, dx\nonumber \\
&=:(I)+(II)+(III).
\end{align*}
Using H\"older's inequality and the boundedness of $v_{\e}(\cdot+y^{k}_{\e})$ we can argue as in the proof of (\ref{terry}) to see that $(I)\rightarrow 0$.

Concerning $(II)$ we can observe that
\begin{align*}
&\iint_{\R^{2N}} \frac{(v_{\e}(x+y^{k}_{\e})-v_{\e}(y+y^{k}_{\e}))(\omega^{k}(x)-\omega^{k}(y))}{|x-y|^{N+2s}} \psi_{\e}(y+y_{\e}^{k}) \, dx dy \\
&=\iint_{\R^{2N}} \frac{[(v_{\e}(x+y^{k}_{\e})-v_{\e}(y+y^{k}_{\e}))(\omega^{k}(x)-\omega^{k}(y))]}{|x-y|^{N+2s}} \, dx dy \\
&\quad +\iint_{\R^{2N}} \frac{(\psi_{\e}(y+y_{\e}^{k})-1)(v_{\e}(x+y^{k}_{\e})-v_{\e}(y+y^{k}_{\e}))(\omega^{k}(x)-\omega^{k}(y))}{|x-y|^{N+2s}} \, dx dy \nonumber \\
&=:(II)_{1}+(II)_{2}.
\end{align*}
Due to the fact that $v_{\e}(\cdot+y^{k}_{\e})\rightharpoonup \omega^{k}$ in $H^{s}(\R^{N})$, we obtain that $(II)_{1}\rightarrow [\omega^{k}]^{2}$.
On the other hand, using H\"older's inequality and the fact that $v_{\e}(\cdot+y_{\e}^{k})$ is bounded in $H^{s}(\R^{N})$, we have
\begin{align*}
|(II)_{2}|\leq C\left(\iint_{\R^{2N}} \frac{|(\psi_{\e}(x+y^{k}_{\e})-1)(\omega^{k}(x)-\omega^{k}(y))|^{2}}{|x-y|^{N+2s}}  \, dx dy\right)^{\frac{1}{2}}\rightarrow 0
\end{align*}
in view of the dominated convergence theorem.
Since it is clear that $(III)\rightarrow \int_{\R^{N}} V(x^{k}) (\omega^{k})^{2} \, dx$, we deduce that (\ref{4.30}) holds.
In a similar fashion, we can obtain  
\begin{equation}\label{4.31}
\langle\psi_{\varepsilon} \omega^{k}(\cdot-y_{\varepsilon}^{k}), \psi_{\varepsilon} \omega^{k'}(\cdot-y_{\varepsilon}^{k'})\rangle
_{H^{s}_{\varepsilon}}\rightarrow
\left\{
\begin{array}{cc}
0 &\mbox{ if } k\neq k' \\
\bigl\bracevert \omega^{k}\bigr\bracevert _{x^{k}}^{2} &\mbox{ if } k=k'.
\end{array} 
\right.
\end{equation}
Putting together (\ref{4.29}), (\ref{4.30}) and (\ref{4.31}), we can infer that (\ref{4.28}) holds. 
Now, (\ref{4.28}) yields that 
$$
\sum_{k=1}^{n} \bigl\bracevert \omega^{k}\bigr\bracevert _{x^{k}}^{2} \leq 
\lim_{\varepsilon \rightarrow 0} \|v_{\varepsilon}\|_{H^{s}_{\varepsilon}}^{2},
$$
and using Lemma \ref{lemma 4.1}-$(ii)$ and (\ref{4.4}) we get
$$
\delta^{2}_{1} n \leq \lim_{\varepsilon \rightarrow 0} \|v_{\varepsilon}\|_{H^{s}_{\varepsilon}}^{2} \leq m^{2}.
$$
Therefore, the procedure to find $(y_{\varepsilon}^{k}), x^{k}, \omega^{k}$ can not be iterated infinitely many times. 
Hence there exist $l\in \N \cup \{0 \}$, $(y_{\varepsilon}^{k}), x^{k}, \omega^{k}$ such that (\ref{4.5})-(\ref{4.8}) hold. 
Clearly, (\ref{4.9}) follows in a standard way by (\ref{4.5})-(\ref{4.8}).
\end{proof}

\noindent
In the next lemma we investigate the behavior of $c_{\varepsilon}$ as $\varepsilon \rightarrow 0$. 

\begin{lem}\label{proposizione 6.1}
Let $(c_{\varepsilon})_{\varepsilon\in (0, \varepsilon_{1}]}$ be the mountain pass value of $J_{\e}$ defined in $(\ref{2.20})$-$(\ref{2.21})$. Then
$$
c_{\varepsilon}\rightarrow m(0)=\inf_{x\in \R^{N}} m(x) \mbox{ as } \varepsilon\rightarrow 0.
$$
\end{lem}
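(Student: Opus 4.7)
The plan is to establish the two inequalities $\limsup_{\e\to 0} c_\e \leq m(0)$ and $\liminf_{\e \to 0} c_\e \geq m(0)$, and combine them with Lemma \ref{prop 5.3}.

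For the upper bound, since $0 \in \Lambda'$ lies in $\Omega$ with $V(0) = \inf_{\Lambda} V$, Lemma \ref{prop 5.2} applied to $\Phi_0$ yields a path $\gamma \in C([0,1], H^s(\R^N))$ with $\gamma(0)=0$, $\Phi_0(\gamma(1))<0$, and $\max_{t \in [0,1]} \Phi_0(\gamma(t)) = m(0)$. I would use the cut-off competitor $\tilde\gamma_\e(t) := \psi_\e \gamma(t) \in H^s_\e$, where $\psi_\e(x) = \psi(\e x)$ is the function introduced in Lemma \ref{prop 4.1}. Exploiting $\psi(\e x)\to 1$, $V(\e x) \to V(0)$ and $\chi(\e x)\to \chi(0)=1$ pointwise, one checks by Dominated Convergence (for the nonlinear term, using $(f3)$ and $G(x,t)\leq F(t)$) together with an argument analogous to \eqref{terry1}--\eqref{terry2} (for the Gagliardo seminorm) that
\[
J_\e(\tilde\gamma_\e(t)) \longrightarrow \Phi_0(\gamma(t)) \quad \text{uniformly in } t\in [0,1].
\]
In particular $J_\e(\tilde\gamma_\e(1)) < 0$ for $\e$ small, so $\tilde\gamma_\e \in \Gamma_\e$, whence
\[
c_\e \leq \max_{t\in [0,1]} J_\e(\tilde\gamma_\e(t)) \longrightarrow m(0).
\]

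For the lower bound, I would fix a subsequence $\e_j \to 0$ along which $c_{\e_j}$ converges to $\liminf_{\e\to 0} c_\e \in [m_1,m_2]$, and take the critical points $v_{\e_j}$ of $J_{\e_j}$ at level $c_{\e_j}$ from Corollary \ref{corollario 3.1}. By construction they satisfy \eqref{4.1}--\eqref{4.4}, so the concentration-compactness Lemma \ref{prop 4.1} applies: after extracting a further subsequence,
\[
c_{\e_j} = J_{\e_j}(v_{\e_j}) \longrightarrow \sum_{k=1}^{l} \Phi_{x^k}(\omega^k),
\]
where each $x^k \in \Omega$ and each $\omega^k \not\equiv 0$ is a critical point of $\Phi_{x^k}$. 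Because $c_{\e_j} \geq m_1 > 0$, necessarily $l \geq 1$. By the very definition of $m(x^k)$ as the least energy level, $\Phi_{x^k}(\omega^k) \geq m(x^k)$, and Lemma \ref{prop 5.3} gives $m(x^k) \geq \inf_{x\in \R^N} m(x) = m(0)$. Since each $m(x^k) \geq m(0) > 0$, the sum is at least $l\, m(0) \geq m(0)$, so
\[
\liminf_{\e\to 0} c_\e = \lim_j c_{\e_j} \geq m(0).
\]
Combining the two bounds yields $c_\e \to m(0) = \inf_{x\in \R^N} m(x)$.

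The main obstacle is the upper bound: one must verify that the cut-off competitor $\psi_\e \gamma(t)$ achieves the same asymptotic maximum as $\gamma(t)$, \emph{uniformly} in $t\in [0,1]$, which is essential to pass to the supremum before the limit. This requires careful estimates on the nonlocal Gagliardo seminorm of $\psi_\e \gamma(t)$ (in the spirit of \eqref{terry1}--\eqref{terry2}), together with a uniform-in-$t$ application of Dominated Convergence to the nonlinear term, using the continuity of $t\mapsto \gamma(t)$ in $H^s(\R^N)$ and the compactness of $\gamma([0,1])$.
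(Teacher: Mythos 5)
Your proposal is correct, and the lower bound is exactly the paper's argument (with a slightly more careful extraction of a subsequence along which $c_{\varepsilon}$ realizes its $\liminf$ before invoking Lemma \ref{prop 4.1}, which is a small but genuine refinement of the paper's presentation). The upper bound, however, takes a different route. You use the single family $\tilde\gamma_\varepsilon(t)=\psi_\varepsilon\gamma(t)$ with $\psi_\varepsilon(x)=\psi(\varepsilon x)$, so that the cut-off scale is tied to $\varepsilon$; this forces you to control $[\psi_\varepsilon\gamma(t)]^2-[\gamma(t)]^2$ uniformly in $t$ via estimates in the spirit of \eqref{terry1}--\eqref{terry2} plus equi-integrability on the compact set $\gamma([0,1])\subset H^s(\R^N)$, which is precisely the difficulty you flag. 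The paper instead sets $\gamma_R(t)(x)=\varphi(x/R)\gamma(t)(x)$ with a fixed $R$, i.e., an $\varepsilon$-independent truncation. The key advantage is that for fixed $R$ the difference $J_\varepsilon(\gamma_R(t))-\Phi_0(\gamma_R(t))$ contains no Gagliardo seminorm contribution at all (the nonlocal terms cancel identically, since the same function $\gamma_R(t)$ appears in both functionals), leaving only the $V(\varepsilon x)-V(0)$ and $G(\varepsilon x,\cdot)-G(0,\cdot)$ terms, which are supported in a fixed ball and converge uniformly in $t$ trivially; one then sends $R\to\infty$. So both routes are valid, but the paper's decoupling of the two limits ($\varepsilon\to 0$ first, then $R\to\infty$) sidesteps the nonlocal-seminorm estimate entirely, whereas your single-parameter competitor buys a more compact statement at the cost of a genuinely harder (though doable) uniformity argument for the fractional seminorm.
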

\begin{proof}
From Lemma \ref{prop 5.2} we can find  a path $\gamma \in C([0,1], H^{s}(\R^{N}))$
such that  $\gamma (0)=0$, $\Phi_{0}(\gamma(1))<0$, $\Phi_{0}(\gamma(t))\leq m(0)$ for all $t\in [0,1]$, and
$$
\max_{t\in [0,1]} \Phi_{0}(\gamma(t))=m(0).
$$
Take $\varphi\in C^{\infty}_{0}(\R^{N})$ such that $\varphi(0)=1$ and $\varphi\geq 0$, and we set 
$$
\gamma_{R}(t)(x)= \varphi\left(\frac{x}{R}\right)\gamma(t)(x).
$$
Thus, it is easy to check that $\gamma_{R}(t)\in C([0,1], H^{s}_{\varepsilon}(\R^{N}))$, $\gamma_{R}(0)=0$ and $\Phi_{0}(\gamma_{R}(1))<0$
for any $R>1$ sufficiently large. Therefore $\gamma_{R}(t)\in \Gamma_{\varepsilon}$. 
Now, fixed $R>0$, we can see that $\max_{t\in [0,1]} |J_{\varepsilon}(\gamma_{R}(t))-\Phi_{0}(\gamma_{R}(t))|\rightarrow 0$ as
$\varepsilon \rightarrow 0$. Hence, for any $R>1$ large enough, 
we get 
$$
c_{\varepsilon}\leq \max_{t\in [0,1]} J_{\varepsilon}(\gamma_{R}(t))\rightarrow
\max_{t\in [0,1]} \Phi_{0}(\gamma_{R}(t)) \mbox{ as } \varepsilon\rightarrow 0.
$$ 
On the other hand 
$$
\max_{t\in [0,1]} \Phi_{0}(\gamma_{R}(t)) \rightarrow m(0)\mbox{ as } R\rightarrow \infty,
$$
so we deduce that $\limsup_{\varepsilon\rightarrow 0} c_{\varepsilon} \leq m(0)$. 

In order to complete the proof, we prove that $\liminf_{\varepsilon\rightarrow 0} c_{\varepsilon}\geq m(0)$. Let $v_{\varepsilon} 
\in H^{s}_{\varepsilon}$ be a critical point of $J_{\varepsilon}(v)$ associated to $c_{\varepsilon}$.
From Lemma \ref{prop 4.1}, there exist $\varepsilon_{j}\rightarrow 0, l\in \N 
\cup \{0\}$, $(y_{\varepsilon_{j}}^{k})\subset \R^{N}$, $x^{k}\in \Omega$, $\omega^{k}\in H^{s}(\R^{N})\setminus \{0\}$ $(k=1,\ldots, l)$ satisfying $(\ref{4.5})$-$(\ref{4.9})$. 
If by contradiction $l=0$, then (\ref{4.9}) yields $c_{\varepsilon_{j}}= J_{\varepsilon_{j}}(v_{\varepsilon_{j}})
\rightarrow 0$ which contradicts Corollary \ref{corollario 2.2}. 
Consequently, $l\geq 1$ and using (\ref{4.9}) and Lemma \ref{prop 5.3} we have 
$$
\liminf_{j\rightarrow \infty} c_{\varepsilon_{j}} = \sum_{k=1}^{l} \Phi_{x^{k}}(\omega^{k}) 
\geq \sum_{k=1}^{l} m(x^{k})\geq lm(0) \geq m(0).
$$
\end{proof}

\noindent
From Lemma \ref{proposizione 6.1} we deduce the following result.
\begin{lem}\label{proposizione 6.2}
For any $\varepsilon \in (0,\varepsilon_{1}]$, let us denote by $v_{\varepsilon}$ a critical point of  $J_{\varepsilon}$ corresponding to $c_{\varepsilon}$. Then for any sequence $\varepsilon_{j}\rightarrow 0$ we can find a subsequence, still denoted by $\varepsilon_{j}$, and $y_{\varepsilon_{j}}, x^{1}, \omega^{1}$ such that 
\begin{align}
& \varepsilon_{j} y_{\varepsilon_{j}} \rightarrow x^{1}, \label{6.1}\\
& x^{1} \in \Lambda' : V(x^{1})= \inf_{x\in \Lambda} V(x), \label{6.2}\\
& \omega^{1}(x) \mbox{ is a least energy solution of } \Phi'_{x^{1}}(v)=0,\label{6.3} \\
&\|v_{\varepsilon_{j}} - \psi_{\varepsilon_{j}} w^{1}(\cdot-y_{\varepsilon_{j}})\|_{H^{s}_{\varepsilon_{j}}}\rightarrow 0, 
\label{6.4}\\
&J_{\varepsilon_{j}}(v_{\varepsilon_{j}})\rightarrow m(x^{1})=m(0). \label{6.5}
\end{align}
\end{lem}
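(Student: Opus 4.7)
The plan is to combine the concentration-compactness description of $(v_{\e_j})$ from Lemma \ref{prop 4.1} with the precise mountain pass value estimate from Lemma \ref{proposizione 6.1}, and then use Lemma \ref{prop 5.3} to identify the concentration point.

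First I would apply Lemma \ref{prop 4.1} to the sequence $(v_{\e_j})$, which satisfies \eqref{4.1}--\eqref{4.4} by Corollary \ref{corollario 3.1} and Lemma \ref{proposizione 3.1}. Up to a subsequence, we obtain $l\in \N\cup\{0\}$, sequences $(y^{k}_{\e_j})\subset \R^{N}$, points $x^{k}\in \Omega$ and $\omega^{k}\in H^{s}(\R^{N})\setminus\{0\}$ ($k=1,\dots,l$) such that \eqref{4.5}--\eqref{4.9} hold. In particular,
\begin{equation*}
c_{\e_j}=J_{\e_j}(v_{\e_j}) \longrightarrow \sum_{k=1}^{l} \Phi_{x^{k}}(\omega^{k}).
\end{equation*}
On the other hand, Lemma \ref{proposizione 6.1} gives $c_{\e_j}\to m(0)=\inf_{x\in\R^{N}} m(x)$.

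Next I would rule out the cases $l=0$ and $l\geq 2$. The case $l=0$ would force $c_{\e_j}\to 0$, contradicting $c_{\e_j}\geq m_{1}>0$ from Corollary \ref{corollario 2.2}. For $l\geq 1$, each $\omega^{k}$ is a non-zero critical point of $\Phi_{x^{k}}$, so by the definition of $m(x^{k})$ and Lemma \ref{prop 5.3} we have $\Phi_{x^{k}}(\omega^{k})\geq m(x^{k})\geq m(0)$ for every $k$. Summing,
\begin{equation*}
m(0)= \sum_{k=1}^{l} \Phi_{x^{k}}(\omega^{k}) \geq l\,m(0),
\end{equation*}
and since $m(0)>0$ this forces $l=1$, with equalities throughout: $\Phi_{x^{1}}(\omega^{1})=m(x^{1})=m(0)$.

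Once $l=1$, the conclusions follow cleanly. Setting $y_{\e_j}:=y^{1}_{\e_j}$, property \eqref{6.1} is exactly \eqref{4.6}, and \eqref{6.4} is \eqref{4.8}. The identity $\Phi_{x^{1}}(\omega^{1})=m(x^{1})$ together with $\Phi'_{x^{1}}(\omega^{1})=0$ (from \eqref{4.7}) says $\omega^{1}$ is a least energy solution to $(-\Delta)^{s}u+V(x^{1})u=g(x^{1},u)$, giving \eqref{6.3}. Since $m(x^{1})=m(0)=\inf_{x\in\R^{N}} m(x)$, the characterization in Lemma \ref{prop 5.3} yields $x^{1}\in \Lambda$ with $V(x^{1})=\inf_{\Lambda} V$; the observation made in the proof of Lemma \ref{prop 5.3} (that $x^{1}\in\Lambda\setminus\Lambda'$ would contradict $\inf_{\Lambda\setminus\Lambda'}V>\inf_{\Lambda} V$) then refines this to $x^{1}\in\Lambda'$, giving \eqref{6.2}. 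Finally, \eqref{6.5} is simply $J_{\e_j}(v_{\e_j})=c_{\e_j}\to m(0)=m(x^{1})$.

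The only slightly delicate point is the strict inequality $l\leq 1$: this relies on having both $\Phi_{x^{k}}(\omega^{k})\geq m(x^{k})$ (which uses that any non-zero critical point has energy at least the least energy, a consequence of Lemma \ref{prop 5.2}) and $m(x^{k})\geq m(0)$ (Lemma \ref{prop 5.3}); neither step is long, but both rest on having properly identified $m(x)$ as the mountain pass level for the autonomous limit problem. With those in hand the argument is essentially bookkeeping, and I expect no further obstacle.
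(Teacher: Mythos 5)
Your argument is correct and matches the approach the paper intends: the paper does not supply a proof of Lemma \ref{proposizione 6.2} beyond the phrase ``From Lemma \ref{proposizione 6.1} we deduce the following result,'' and the decisive inequality $\sum_{k=1}^{l}\Phi_{x^{k}}(\omega^{k})\geq \sum_{k=1}^{l} m(x^{k})\geq l\,m(0)$ that you use to force $l=1$ (and then to extract the equalities $\Phi_{x^{1}}(\omega^{1})=m(x^{1})=m(0)$) is precisely the chain already displayed at the end of the proof of Lemma \ref{proposizione 6.1}. Your identification of $x^{1}\in\Lambda'$ via Lemma \ref{prop 5.3} and the reading of the remaining items \eqref{6.1}, \eqref{6.4}, \eqref{6.5} from \eqref{4.6}, \eqref{4.8}, \eqref{4.9} is what the author leaves implicit, so there is nothing to add.
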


\section{Proof of Theorem \ref{teorema principale}}
In this last section we provide the proof of Theorem \ref{teorema principale}. From Corollary \ref{corollario 3.1}, we can see that
there exists $\e_{1}\in (0, \e_{0}]$ such that 
for any $\e\in (0, \e_{1}]$, there exists a critical point $v_{\e}\in H^{s}_{\e}$ of $J_{\e}$ satisfying $J_{\e}(v_{\e})=c_{\e}$.
Then, by Lemma \ref{proposizione 6.2} we know that for any sequence $\e_{j}\rightarrow 0$, there exists a subsequence $\e_{j}$ and $(y_{\varepsilon_{j}})\subset \R^{N}$, $x^{1}\in \Lambda'$, $\omega^{1}\in H^{s}(\R^{N})\setminus \{0\}$ satisfying (\ref{6.1})-(\ref{6.5}). Moreover, by the maximum principle \cite{CabS} $v_{\e_{j}}>0$ in $\R^{N}$.
In view of (\ref{2.18}) and (\ref{6.4}) we obtain 
\begin{equation}\label{AM1}
\|v_{\varepsilon_{j}} - \psi_{\varepsilon_{j}} \omega^{1}(\cdot-y_{\varepsilon_{j}})\|_{H^{s}(\R^{N})}\rightarrow 0.
\end{equation}
We also note that (\ref{4.28}) and (\ref{AM1}) yield
\begin{equation}\label{vincenzo}
\lim_{j\rightarrow \infty} \|v_{\e_{j}}\|^{2}_{H^{s}_{\e_{j}}}=\bigl\bracevert\omega^{1}\bigr\bracevert^{2}_{x^{1}}\neq 0.
\end{equation}
Let $\tilde{v}_{\varepsilon_{j}}(x):=v_{\e_{j}}(x+y_{\e_{j}})$.
Arguing as in the proof of (\ref{terry}), and using $\psi(x^{1})=1$, (\ref{6.1}) and the dominated convergence theorem, we can see that 
\begin{align*}
&[\psi_{\e_{j}}(\cdot+y_{\e_{j}})\omega^{1}-\omega^{1}]^{2} \\
&\leq 2\iint_{\R^{2N}} \frac{|\psi_{\e_{j}}(x+y_{\e_{j}})-\psi_{\e_{j}}(y+y_{\e_{j}})|^{2}}{|x-y|^{N+2s}}(\omega^{1}(x))^{2} \, dx dy\\ &\quad +2 \iint_{\R^{2N}} \frac{|\psi_{\e_{j}}(y+y_{\e_{j}})-1|^{2}}{|x-y|^{N+2s}}|\omega^{1}(x)-\omega^{1}(y)|^{2} \, dx dy\rightarrow 0.
\end{align*}
Clearly
\begin{align*}
\int_{\R^{N}} |\psi_{\e_{j}}(x+y_{\e_{j}}) \omega^{1}-\omega^{1}|^{2} \, dx \rightarrow 0.
\end{align*}
These two facts, together with (\ref{AM1}), imply that 
\begin{equation}\label{AM2}
\|\tilde{v}_{\varepsilon_{j}} - \omega^{1}\|_{H^{s}(\R^{N})}\rightarrow 0.
\end{equation}
Now we prove the following lemma which will be fundamental to study the behavior of the maximum points of solutions of (\ref{P}).
\begin{lem}\label{Moser}
There exists $K>0$ such that
$$
\|\tilde{v}_{\varepsilon_{j}}\|_{L^{\infty}(\R^{N})}\leq K \mbox{ for all } j\in \N.
$$
\end{lem}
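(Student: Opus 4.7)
The plan is to establish the uniform $L^\infty$ estimate via a Moser iteration adapted to the fractional setting, exploiting the uniform $H^s$-bound coming from (\ref{AM2}) and the subcritical growth of the penalized nonlinearity.

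First, I would observe that after the change of variable $x \mapsto x + y_{\varepsilon_j}$ in (\ref{2.17}), the function $\tilde v_{\varepsilon_j}$ satisfies the translated equation
$$
(-\Delta)^s \tilde v_{\varepsilon_j} + V(\varepsilon_j x + \varepsilon_j y_{\varepsilon_j}) \tilde v_{\varepsilon_j} = g(\varepsilon_j x + \varepsilon_j y_{\varepsilon_j}, \tilde v_{\varepsilon_j}) \quad \text{in } \mathbb{R}^N,
$$
with $V(\varepsilon_j x + \varepsilon_j y_{\varepsilon_j}) \geq V_0 > 0$. By Corollary \ref{corollario 2.1}-$(iii)$, for every $\delta > 0$ there is $C_\delta > 0$ such that $|g(x,t)| \leq \delta |t| + C_\delta |t|^p$, so the right-hand side has uniformly subcritical growth. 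Convergence (\ref{AM2}) gives that $(\tilde v_{\varepsilon_j})$ is uniformly bounded in $H^s(\mathbb{R}^N)$, hence in $L^{2^*_s}(\mathbb{R}^N)$ by Theorem \ref{Sembedding}.

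Next, I would run a Moser iteration. For $L > 0$ and $\beta \geq 1$ set $w_L = \min\{\tilde v_{\varepsilon_j}, L\}$ and test the equation against $\varphi = w_L^{2(\beta-1)} \tilde v_{\varepsilon_j} \in H^s(\mathbb{R}^N)$. Using the standard convexity-type inequality for the fractional Laplacian (see the identities used in \cite{FQT}),
$$
\bigl[\, w_L^{\beta-1}\tilde v_{\varepsilon_j}\,\bigr]^2 \leq C\beta \int_{\mathbb{R}^N} (-\Delta)^{s/2}\tilde v_{\varepsilon_j}\, (-\Delta)^{s/2}\bigl(w_L^{2(\beta-1)}\tilde v_{\varepsilon_j}\bigr)\, dx,
$$
together with $|g(x,t)t| \leq \delta t^2 + C_\delta t^{p+1}$ and the fractional Sobolev embedding $[u] \geq S_*^{1/2}\|u\|_{L^{2^*_s}}$, I would derive an inequality of the type
$$
\|w_L^{\beta-1}\tilde v_{\varepsilon_j}\|_{L^{2^*_s}}^{\,2} \leq C\beta \int_{\mathbb{R}^N} \tilde v_{\varepsilon_j}^{\,p-1} w_L^{2(\beta-1)}\tilde v_{\varepsilon_j}^{\,2}\, dx.
$$
Letting $L \to \infty$ and applying H\"older's inequality with the uniform $L^{2^*_s}$-bound of $\tilde v_{\varepsilon_j}$, one recovers the iteration
$$
\|\tilde v_{\varepsilon_j}\|_{L^{2^*_s \beta}} \leq (C\beta)^{\frac{1}{2\beta}} \|\tilde v_{\varepsilon_j}\|_{L^{2^*_s\,\sigma \beta}}^{\,\alpha},
$$
for suitable $\sigma \in (0,1)$ and $\alpha \in (0,1)$ depending only on $N, s, p$. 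Bootstrapping on $\beta_n = \sigma^{-n}$ and summing the geometric series yields $\|\tilde v_{\varepsilon_j}\|_{L^\infty} \leq K$ with $K$ depending only on $N, s, p$, $\|V\|_\infty$ and $\sup_j \|\tilde v_{\varepsilon_j}\|_{L^{2^*_s}} < \infty$.

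The main obstacle is the nonlocal nature of the truncation argument: unlike the classical case, $\bigl[w_L^{\beta-1}\tilde v_{\varepsilon_j}\bigr]^2$ cannot be controlled by the Dirichlet form applied to $w_L^{2(\beta-1)}\tilde v_{\varepsilon_j}$ in an elementary way, and one must invoke a pointwise inequality for the fractional Laplacian of powers (the Straus-type inequality). I would bypass this by citing directly the uniform $L^\infty$-bound for families of bounded solutions to fractional Schr\"odinger equations with subcritical nonlinearities proved in \cite{FQT}: applied to $(\tilde v_{\varepsilon_j})$ together with the uniform bound (\ref{AM2}), that result furnishes the required constant $K > 0$.
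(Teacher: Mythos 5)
Your plan to run a Moser iteration exploiting the uniform $H^{s}$-bound from (\ref{AM2}) (hence a uniform $L^{2^{*}_{s}}$-bound by Sobolev), the subcritical growth of $g$, and a Stroock--Varopoulos/convexity inequality for the fractional Dirichlet form is exactly the approach the paper takes; the paper's truncated power $\varphi_{T,\beta}$ together with the pointwise bound $(-\Delta)^{s}\varphi(u)\le\varphi'(u)(-\Delta)^{s}u$ for convex Lipschitz $\varphi$ plays the same role as your truncation $w_{L}=\min\{\tilde v_{\varepsilon_{j}},L\}$ tested against $w_{L}^{2(\beta-1)}\tilde v_{\varepsilon_{j}}$, and the iteration closes in the same way, with the smallness of $\int_{\{\tilde v_{\varepsilon_{j}}>R\}}\tilde v_{\varepsilon_{j}}^{2^{*}_{s}}$ used to absorb the highest-order term at the first step.

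The gap is the final shortcut, where instead of carrying the iteration through you propose to quote a ``uniform $L^{\infty}$-bound for families of bounded solutions'' from \cite{FQT}. No such ready-made statement is available there in the form needed: the regularity results of \cite{FQT} concern a single solution of the autonomous, constant-coefficient model $(-\Delta)^{s}u+u=f(u)$, while here $\tilde v_{\varepsilon_{j}}$ solves an $\varepsilon_{j}$-dependent equation with the variable potential $V(\varepsilon_{j}x+\varepsilon_{j}y_{\varepsilon_{j}})$ and the inhomogeneous penalized nonlinearity $g(\varepsilon_{j}x+\varepsilon_{j}y_{\varepsilon_{j}},\cdot)$, and the whole point of the lemma is precisely the uniformity in $j$. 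So one either has to carry out the Moser iteration explicitly, tracking that every constant depends only on $N$, $s$, $p$, $V_{0}$ and $\sup_{j}\|\tilde v_{\varepsilon_{j}}\|_{L^{2^{*}_{s}}}<\infty$ (which is what the paper does, see (\ref{Me1})--(\ref{Me10})), or else re-derive a uniform version of the regularity estimate adapted to this setting; merely invoking \cite{FQT} does not close the argument. Note also that the ``main obstacle'' you describe -- that $[w_{L}^{\beta-1}\tilde v_{\varepsilon_{j}}]^{2}$ cannot be controlled elementarily by the Dirichlet pairing with $w_{L}^{2(\beta-1)}\tilde v_{\varepsilon_{j}}$ -- is in fact resolved precisely by the convexity inequality you already cite; it is not a genuine obstruction and is no reason to abandon the iteration for a black-box reference.
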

\begin{proof}
Let $\beta\geq 1$ and $T>0$, and we introduce the following function
\begin{equation*}
\varphi(t)=\varphi_{T, \beta}(t)=
\left\{
\begin{array}{ll}
0 & \mbox{ if } t\leq 0 \\
t^{\beta} & \mbox{ if } 0<t<T \\
\beta T^{\beta-1}(t-T)+T^{\beta} & \mbox{ if } t\geq T. \\
\end{array}
\right.
\end{equation*}
Since $\varphi$ is convex and Lipschitz, we can see that for any $u\in \mathcal{D}^{s, 2}(\R^{N})$
\begin{align*}
&\varphi(u)\in \mathcal{D}^{s, 2}(\R^{N}) \\
&(-\Delta)^{s}\varphi(u)\leq \varphi'(u)(-\Delta)^{s}u.
\end{align*}
Now, using Theorem \ref{Sembedding}, an integration by parts, $(V1)$, $\tilde{v}_{\e_{j}}\geq 0$, and the growth assumptions on $g$, we have
\begin{align*}
&\|\varphi(\tilde{v}_{\e_{j}})\|^{2}_{L^{2^{*}_{s}}(\R^{N})}\\
&\leq S_{*}^{-1} \int_{\R^{N}} |(-\Delta)^{\frac{s}{2}} \varphi(\tilde{v}_{\e_{j}})|^{2}\, dx \\
&=S_{*}^{-1} \int_{\R^{N}} \varphi(\tilde{v}_{\e_{j}})(-\Delta)^{s} \varphi(\tilde{v}_{\e_{j}})\, dx \\
&\leq S_{*}^{-1} \int_{\R^{N}} \varphi(\tilde{v}_{\e_{j}}) \varphi'(\tilde{v}_{\e_{j}})(-\Delta)^{s} \tilde{v}_{\e_{j}} \, dx \\
&\leq CS_{*}^{-1} \int_{\R^{N}} \varphi(\tilde{v}_{\e_{j}}) \varphi'(\tilde{v}_{\e_{j}})(1+\tilde{v}_{\e_{j}}^{2^{*}_{s}-1})\, dx  \\
&=CS_{*}^{-1} \left(\int_{\R^{N}} \varphi(\tilde{v}_{\e_{j}}) \varphi'(\tilde{v}_{\e_{j}}) \, dx +  \int_{\R^{N}} \varphi(\tilde{v}_{\e_{j}}) \varphi'(\tilde{v}_{\e_{j}})\tilde{v}_{\e_{j}}^{2^{*}_{s}-1} \, dx\right),
\end{align*}
where $C$ is a constant independent of $\beta$ and $j$.

In view of $\varphi(\tilde{v}_{\e_{j}})\varphi'(\tilde{v}_{\e_{j}})\leq \beta \tilde{v}_{\e_{j}}^{2\beta-1}$ and $\tilde{v}_{\e_{j}}\varphi'(\tilde{v}_{\e_{j}})\leq \beta \varphi(\tilde{v}_{\e_{j}})$, we get
\begin{align}\label{Me1}
\|\varphi(\tilde{v}_{\e_{j}})\|^{2}_{L^{2^{*}_{s}}(\R^{N})}\leq C\beta \left( \int_{\R^{N}} \tilde{v}_{\e_{j}}^{2\beta-1} \, dx+\int_{\R^{N}} (\varphi(\tilde{v}_{\e_{j}}))^{2}\tilde{v}_{\e_{j}}^{2^{*}_{s}-2} \, dx \right),
\end{align}
where $C$ is a constant independent of $\beta$ and $j$. We also point out that the last integral in (\ref{Me1}) is well defined for every $T>0$ in the definition of $\varphi$.
Now we take $\beta$ in (\ref{Me1}) such that $2\beta-1=2^{*}_{s}$, and we denote it by 
\begin{equation}\label{Me2}
\beta_{1}=\frac{2^{*}_{s}+1}{2}.
\end{equation}
Let $R>0$ to be fixed later. Applying the H\"older inequality in the last integral in (\ref{Me1}), we can see that
\begin{align}\label{Me3}
&\int_{\R^{N}} (\varphi(\tilde{v}_{\e_{j}}))^{2} \tilde{v}_{\e_{j}}^{2^{*}_{s}-2}\, dx \nonumber \\
&=  \int_{\{\tilde{v}_{\e_{j}}\leq R\}} (\varphi(\tilde{v}_{\e_{j}}))^{2} \tilde{v}_{\e_{j}}^{2^{*}_{s}-2}\, dx
+\int_{\{\tilde{v}_{\e_{j}}> R\}} (\varphi(\tilde{v}_{\e_{j}}))^{2} \tilde{v}_{\e_{j}}^{2^{*}_{s}-2}\, dx \nonumber\\
&\leq  \int_{\{\tilde{v}_{\e_{j}}\leq R\}} \frac{(\varphi(\tilde{v}_{\e_{j}}))^{2}}{\tilde{v}_{\e_{j}}} R^{2^{*}_{s}-1}\, dx\nonumber \\
&\quad +\left(\int_{\R^{N}} (\varphi(\tilde{v}_{\e_{j}}))^{2^{*}_{s}} \, dx\right)^{\frac{2}{2^{*}_{s}}} \left(\int_{\{\tilde{v}_{\e_{j}}>R\}} \tilde{v}_{\e_{j}}^{2^{*}_{s}} \, dx\right)^{\frac{2^{*}_{s}-2}{2^{*}_{s}}}.
\end{align}
Since $(\tilde{v}_{\e_{j}})$ is bounded in $H^{s}(\R^{N})$, we can take $R$ sufficiently large such that
\begin{align*}
\left(\int_{\{\tilde{v}_{\e_{j}}>R\}} \tilde{v}_{\e_{j}}^{2^{*}_{s}} \, dx\right)^{\frac{2^{*}_{s}-2}{2^{*}_{s}}}\leq \frac{1}{2C \beta_{1}}.
\end{align*}
This together with (\ref{Me1}), (\ref{Me2}) and (\ref{Me3}), yields
\begin{align}\label{Me4}
\|\varphi(\tilde{v}_{\e_{j}})\|^{2}_{L^{2^{*}_{s}}(\R^{N})}\leq2 C\beta_{1}\left(\int_{\R^{N}} \tilde{v}_{\e_{j}}^{2^{*}_{s}} \, dx+R^{2^{*}_{s}-1}\int_{\R^{N}} \frac{\varphi(\tilde{v}_{\e_{j}})^{2}}{\tilde{v}_{\e_{j}}} \, dx \right).
\end{align}
From $\varphi(\tilde{v}_{\e_{j}})\leq \tilde{v}_{\e_{j}}^{\beta_{1}}$ and (\ref{Me2}), and taking the limit as $T\rightarrow \infty$ in (\ref{Me4}), we have
\begin{align*}
\left(\int_{\R^{N}} \tilde{v}_{\e_{j}}^{2^{*}_{s}\beta_{1}} \, dx\right)^{\frac{2}{2^{*}_{s}}}
\leq2 C\beta_{1}\left(\int_{\R^{N}} \tilde{v}_{\e_{j}}^{2^{*}_{s}} \, dx+R^{2^{*}_{s}-1}\int_{\R^{N}}   \tilde{v}_{\e_{j}}^{2^{*}_{s}} \, dx\right)<\infty,
\end{align*} 
which gives 
\begin{align}\label{Me5}
\tilde{v}_{\e_{j}}\in L^{2^{*}_{s}\beta_{1}}(\R^{N}).
\end{align}
Now we assume that $\beta>\beta_{1}$. Thus, using $\varphi(\tilde{v}_{\e_{j}})\leq \tilde{v}_{\e_{j}}^{\beta}$ on the right hand side of (\ref{Me1}) and letting $T\rightarrow \infty$ we deduce that
\begin{align}\label{Me6}
&\left(\int_{\R^{N}} \tilde{v}_{\e_{j}}^{2^{*}_{s}\beta} \, dx\right)^{\frac{2}{2^{*}_{s}}} \leq C\beta \left(\int_{\R^{N}} \tilde{v}_{\e_{j}}^{2\beta-1} \, dx+\int_{\R^{N}} \tilde{v}_{\e_{j}}^{2\beta+2^{*}_{s}-2} \, dx\right).
\end{align} 
Set
$$
a:=\frac{2^{*}_{s}(2^{*}_{s}-1)}{2(\beta-1)} \mbox{ and } b:=2\beta-1-a.
$$
Applying Young's inequality with exponents $r=\frac{2^{*}_{s}}{a}$ and $r'=\frac{2^{*}_{s}}{2^{*}_{s}-a}$, we can see that
\begin{align}\label{Me7}
\int_{\R^{N}} \tilde{v}_{\e_{j}}^{2\beta-1} \, dx&\leq \frac{a}{2^{*}_{s}} \int_{\R^{N}} \tilde{v}_{\e_{j}}^{2^{*}_{s}}\, dx+\frac{2^{*}_{s}-a}{2^{*}_{s}} \int_{\R^{N}} \tilde{v}_{\e_{j}}^{\frac{2^{*}_{s} b}{2^{*}_{s}-a}} \, dx \nonumber\\
&\leq \int_{\R^{N}} \tilde{v}_{\e_{j}}^{2^{*}_{s}}\, dx+\int_{\R^{N}} \tilde{v}_{\e_{j}}^{2\beta+2^{*}_{s}-2}\, dx \nonumber\\
&\leq C\left(1+\int_{\R^{N}}  \tilde{v}_{\e_{j}}^{2\beta+2^{*}_{s}-2}\, dx \right).
\end{align}
Putting together (\ref{Me6}) and (\ref{Me7}), we obtain
\begin{align}\label{Me8}
\left(\int_{\R^{N}} \tilde{v}_{\e_{j}}^{2^{*}_{s}\beta} \, dx\right)^{\frac{2}{2^{*}_{s}}}\leq C\beta \left(1+\int_{\R^{N}}  \tilde{v}_{\e_{j}}^{2\beta+2^{*}_{s}-2}\, dx \right).
\end{align}
Consequently,
\begin{align}\label{Me9}
\left(1+\int_{\R^{N}} \tilde{v}_{\e_{j}}^{2^{*}_{s}\beta} \, dx\right)^{\frac{1}{2^{*}_{s}(\beta-1)}}\leq (C\beta)^{\frac{1}{2(\beta-1)}} \left(1+\int_{\R^{N}}  \tilde{v}_{\e_{j}}^{2\beta+2^{*}_{s}-2}\, dx \right)^{\frac{1}{2(\beta-1)}}.
\end{align}
For $m\geq 1$ we define $\beta_{k+1}$ inductively so that $2\beta_{k+1}+2^{*}_{s}-2=2^{*}_{s}\beta_{k}$, that is
$$
\beta_{k+1}=\left(\frac{2^{*}_{s}}{2}\right)^{k}(\beta_{1}-1)+1.
$$
Hence, from \eqref{Me9}, it follows that
\begin{align}\label{Me10}
&\left(1+\int_{\R^{N}} \tilde{v}_{\e_{j}}^{2^{*}_{s}\beta_{k+1}} \, dx\right)^{\frac{1}{2^{*}_{s}(\beta_{k+1}-1)}}\nonumber \\
&\quad \leq (C\beta_{k+1})^{\frac{1}{2(\beta_{k+1}-1)}} \left(1+\int_{\R^{N}}  \tilde{v}_{\e_{j}}^{2^{*}_{s}\beta_{k}}\, dx \right)^{\frac{1}{2^{*}_{s}(\beta_{k}-1)}}.
\end{align}
Let us define
$$
A_{k}:=\left(1+\int_{\R^{N}} \tilde{v}_{\e_{j}}^{2^{*}_{s}\beta_{k}} \, dx\right)^{\frac{1}{2^{*}_{s}(\beta_{k}-1)}}
$$
and
$$
C_{k+1}:=C\beta_{k+1}.
$$
Then we can find a constant $c_{0}>0$ independent of $k$ such that 
$$
A_{k+1}\leq \prod_{m=2}^{k+1} C_{k}^{\frac{1}{2(\beta_{m}-1)}} A_{1}\leq c_{0} A_{1}.
$$
Hence, we can deduce that
$$
\|\tilde{v}_{\e_{j}}\|_{L^{\infty}(\R^{N})}\leq c_{0} A_{1}<\infty,
$$
uniformly in $j\in \N$, thanks to (\ref{Me5}) and $\|\tilde{v}_{\e_{j}}\|_{L^{2^{*}_{s}}(\R^{N})}\leq C$.
This ends the proof of Lemma \ref{Moser}.
\end{proof}

\noindent
Using Lemma \ref{Moser} and the interpolation in $L^{q}$ spaces, we can see that 
\begin{align}\label{moser}
&\tilde{v}_{\e_{j}}\rightarrow \omega^{1} \mbox{ in } L^{q}(\R^{N}),  \mbox{ for any } q\in (2, \infty),\\
&h_{j}(x)=g(\e_{j} x+\e_{j} y_{\e_{j}}, \tilde{v}_{\e_{j}})\rightarrow f(\omega^{1}) \mbox{ in } L^{q}(\R^{N}),  \mbox{ for any } q\in (2, \infty).
\end{align}
Now we note that $\tilde{v}_{\e_{j}}$ satisfies 
$$
(-\Delta)^{s}\tilde{v}_{\e_{j}}+\tilde{v}_{\e_{j}}=\alpha_{j} \mbox{ in } \R^{N},
$$
where $\alpha_{j}(x)=\tilde{v}_{\e_{j}}(x)+h_{j}(x)-V(\e_{j}x+\e_{j}y_{\e_{j}})\tilde{v}_{\e_{j}}(x)$.\\
In view of \eqref{6.1} and (\ref{moser}), we can deduce that 
$$
\alpha_{j}\rightarrow \omega^{1}+f(\omega^{1})-V(x^{1})\omega^{1} \mbox{ in } L^{q}(\R^{N})
$$
for any $q\in [2, \infty)$, and we can find a constant $\kappa>0$ such that 
$$
\|\alpha_{j}\|_{L^{\infty}(\R^{N})}\leq \kappa \mbox{ for all } j\in \N.
$$
Taking into account some results obtained in \cite{FQT}, we know that
$$
\tilde{v}_{\e_{j}}(x)=(\mathcal{K}*\alpha_{j})(x)=\int_{\R^{N}} \mathcal{K}(x-y) \alpha_{j}(y) \, dy,
$$
where $\mathcal{K}$ is the Bessel kernel.
Then we can argue as in the proof of Lemma $2.6$ in \cite{AM} to infer that 
\begin{equation}\label{AM3}
\tilde{v}_{\varepsilon_{j}}(x)\rightarrow 0  \mbox{ as } |x|\rightarrow \infty  
\end{equation}
uniformly in $j\in \N$.
Now we prove that $\tilde{v}_{\e_{j}}$ is a solution to (\ref{P}) for small $\e_{j}>0$.

Using the fact that $\e_{j} y_{\e_{j}}\rightarrow x^{1}\in \Lambda'$, there exists $r>0$ such that for some subsequence, still denoted by itself, we have
$$
B_{r}(\e_{j} y_{\e_{j}})\subset  \Lambda' \mbox{ for all } j\in \N.
$$
By setting $\Lambda'_{\e}=\frac{\Lambda'}{\e}$, we can see that
$$
B_{\frac{r}{\e_{j}}}(y_{\e_{j}})\subset  \Lambda'_{\e_{j}} \mbox{ for all } j\in \N
$$
which yields
$$
\R^{N}\setminus \Lambda'_{\e_{j}} \subset \R^{N}\setminus  B_{\frac{r}{\e_{j}}}(y_{\e_{j}}) \mbox{ for all } j\in \N.
$$
From (\ref{AM3}), there exists $R>0$ such that 
$$
\tilde{v}_{\varepsilon_{j}}(x)<r_{\nu} \mbox{ for all } |x|\geq R, j\in \N
$$
so that
$$
v_{\varepsilon_{j}}(x)=\tilde{v}_{\varepsilon_{j}}(x-y_{\e_{j}})<r_{\nu} \mbox{ for all } x\in \R^{N}\setminus  B_{R}(y_{\e_{j}}), j\in \N.
$$
On the other hand, there exists $j_{0}\in \N$ such that 
$$
\R^{N}\setminus \Lambda'_{\e_{j}}\subset \R^{N}\setminus  B_{\frac{r}{\e_{j}}}(y_{\e_{j}})\subset \R^{N}\setminus  B_{R}(y_{\e_{j}}) \mbox{ for all } j\geq j_{0}.
$$
Hence 
\begin{equation}\label{HZ1T}
v_{\varepsilon_{j}}(x)<r_{\nu} \mbox{ for all } x\in \R^{N}\setminus \Lambda'_{\e_{j}}, j\geq j_{0}.
\end{equation}
Now, up to a subsequence, we may assume that 
\begin{equation}\label{HZ2T}
\|v_{\varepsilon_{j}}\|_{L^{\infty}(B_{R}(y_{\e_{j}}))}\geq r_{\nu} \mbox{ for all }  j\geq j_{0}.
\end{equation}
Otherwise, if this is not the case, we have $\|v_{\varepsilon_{j}}\|_{L^{\infty}(\R^{N})}<r_{\nu}$, and taking into account the definition of $g$ and our choice of $r_{\nu}$, we get
$$
g(\e_{j} x, v_{\e_{j}})v_{\e_{j}}=f(v_{\e_{j}})v_{\e_{j}}\leq \nu v^{2}_{\e_{j}}<\frac{V_{0}}{2} v^{2}_{\e_{j}}. 
$$
Then, by $\langle J_{\e_{j}}'(v_{\e_{j}}), v_{\e_{j}}\rangle=0$ we can deduce that
$$
\|v_{\e_{j}}\|_{H^{s}_{\e_{j}}}^{2}=\int_{\R^{N}} f(v_{\e_{j}})v_{\e_{j}}\, dx\leq \frac{V_{0}}{2} \int_{\R^{N}} v^{2}_{\e_{j}}\, dx
$$
which implies that $\lim_{j\rightarrow \infty} \|v_{\e_{j}}\|_{H^{s}_{\e_{j}}}^{2}=0$, which is a contradiction in view of (\ref{vincenzo}).
Therefore, putting together (\ref{HZ1T}) and (\ref{HZ2T}), we deduce that the maximum points $z_{\e_{j}}\in \R^{N}$ of $v_{\e_{j}}$ belong to $B_{R}(y_{\e_{j}})$. Hence $z_{\e_{j}}=y_{\e_{j}}+\bar{z}_{\e_{j}}$, for some $\bar{z}_{\e_{j}}\in B_{R}$. Recalling that the associated solution of our problem (\ref{P}) is of the form $u_{\e_{j}}(x)=v_{\e_{j}}(\frac{x}{\e_{j}})$, we can conclude that the maximum point $x_{\e_{j}}$ of $u_{\e_{j}}$ is $x_{\e_{j}}:=\e_{j} y_{\e_{j}}+\e_{j} \bar{z}_{\e_{j}}$.
Since $(\bar{z}_{\e_{j}})\subset B_{R}$ is bounded and $\e_{j} y_{\e_{j}}\rightarrow x^{1}\in \Lambda'$ we obtain
$$
\lim_{j\rightarrow \infty} V(x_{\e_{j}})=V(x^{1})=\inf_{x\in \Lambda} V(x).
$$
Therefore, we have proved that there exists $\e_{0}>0$ such that for any $\e\in (0, \e_{0}]$, (\ref{P}) admits a positive solution $u_{\e}(x)=v_{\e}(\frac{x}{\e})$ satisfying $(1)$ of Theorem \ref{teorema principale}. 
Finally, we prove that $(2)$ holds.
Using Lemma $4.3$ in \cite{FQT} we know that there exists a function $w$ such that 
\begin{align}\label{HZ1}
0<w(x)\leq \frac{C}{1+|x|^{N+2s}},
\end{align}
and
\begin{align}\label{HZ2}
(-\Delta)^{s} w+\frac{V_{0}}{2}w\geq 0 \mbox{ in } \R^{N}\setminus B_{R_{1}}, 
\end{align}
for some suitable $R_{1}>0$. In view of (\ref{AM3}), we know that $\tilde{v}_{\e_{j}}(x)\rightarrow 0$ as $|x|\rightarrow \infty$ uniformly in $j$.
This, $(f2)$ and the definition of $g$, implies that for some $R_{2}>0$ sufficiently large, we get
\begin{align}\label{HZ3}
(-\Delta)^{s} \tilde{v}_{\e_{j}}+\frac{V_{0}}{2} \tilde{v}_{\e_{j}} 
&=g(\e_{j} x+\e_{j}y_{\e_{j}}, \tilde{v}_{\e_{j}})-\left(V-\frac{V_{0}}{2}\right)\tilde{v}_{\e_{j}} \nonumber\\
&\leq g(\e_{j} x+\e_{j}y_{\e_{j}}, \tilde{v}_{\e_{j}})-\frac{V_{0}}{2}\tilde{v}_{\e_{j}}\leq 0 \mbox{ in } \R^{N}\setminus B_{R_{2}}. 
\end{align}
Choose $R_{3}=\max\{R_{1}, R_{2}\}$, and we set 
\begin{align}\label{HZ4}
a=\inf_{B_{R_{3}}} w>0 \mbox{ and } \tilde{w}_{\e_{j}}=(b+1)w-a\tilde{v}_{\e_{j}},
\end{align}
where $b=\sup_{j\in \N} \|\tilde{v}_{\e_{j}}\|_{L^{\infty}(\R^{N})}<\infty$. 
Now we prove that 
\begin{equation}\label{HZ5}
\tilde{w}_{\e_{j}}\geq 0 \mbox{ in } \R^{N}.
\end{equation}
We first note that \eqref{HZ2}, \eqref{HZ3} and \eqref{HZ4} yield
\begin{align}
&\tilde{w}_{\e_{j}}\geq ba+w-ba>0 \mbox{ in } B_{R_{3}} \label{HZ0},\\
&(-\Delta)^{s} \tilde{w}_{\e_{j}}+\frac{V_{0}}{2}\tilde{w}_{\e_{j}}\geq 0 \mbox{ in } \R^{N}\setminus B_{R_{3}} \label{HZ00}.
\end{align}
We argue by contradiction, and we assume that there exists a sequence $(\bar{x}_{j, n})\subset \R^{N}$ such that 
\begin{align}\label{HZ6}
\inf_{x\in \R^{N}} \tilde{w}_{\e_{j}}(x)=\lim_{n\rightarrow \infty} \tilde{w}_{\e_{j}}(\bar{x}_{j, n})<0. 
\end{align}
Using (\ref{AM3}), \eqref{HZ1} and the definition of $\tilde{w}_{\e_{j}}$, it is clear that $|\tilde{w}_{\e_{j}}(x)|\rightarrow 0$ as $|x|\rightarrow \infty$, uniformly in $j\in \N$. Thus we can deduce that $(\bar{x}_{j, n})$ is bounded, and, up to subsequence, we may assume that there exists $\bar{x}_{j}\in \R^{N}$ such that $\bar{x}_{j, n}\rightarrow \bar{x}_{j}$ as $n\rightarrow \infty$. 
Thus from (\ref{HZ6}), we get
\begin{align}\label{HZ7}
\inf_{x\in \R^{N}} \tilde{w}_{\e_{j}}(x)= \tilde{w}_{\e_{j}}(\bar{x}_{j})<0.
\end{align}
From the minimality of $\bar{x}_{j}$ and the representation formula for the fractional Laplacian \cite{DPV}, we can see that 
\begin{align}\label{HZ8}
(-\Delta)^{s}\tilde{w}_{\e_{j}}(\bar{x}_{j})=\frac{C(N, s)}{2} \int_{\R^{N}} \frac{2\tilde{w}_{\e_{j}}(\bar{x}_{j})-\tilde{w}_{\e_{j}}(\bar{x}_{j}+\xi)-\tilde{w}_{\e_{j}}(\bar{x}_{j}-\xi)}{|\xi|^{N+2s}} d\xi\leq 0.
\end{align}
Taking into account (\ref{HZ0}) and (\ref{HZ6}), we can infer that $\bar{x}_{j}\in \R^{N}\setminus B_{R_{3}}$.
This, together with (\ref{HZ7}) and (\ref{HZ8}), yields 
$$
(-\Delta)^{s} \tilde{w}_{\e_{j}}(\bar{x}_{j})+\frac{V_{0}}{2}\tilde{w}_{\e_{j}}(\bar{x}_{j})<0,
$$
which contradicts (\ref{HZ00}).
Thus (\ref{HZ5}) holds, and using (\ref{HZ1}) we get
\begin{align}\label{HZ9}
\tilde{v}_{\e_{j}}(x)\leq \frac{\tilde{C}}{1+|x|^{N+2s}} \mbox{ for all } j\in \N, x\in \R^{N},
\end{align}
for some $\tilde{C}>0$.
Since $u_{\e_{j}}(x)=v_{\e_{j}}(\frac{x}{\e_{j}})=\tilde{v}_{\e_{j}}(\frac{x}{\e_{j}}-y_{\e_{j}})$ and $x_{\e_{j}}=\e_{j}y_{\e_{j}}+\e_{j} \bar{z}_{\e_{j}}$, from (\ref{HZ9}) we obtain for any $x\in \R^{N}$
\begin{align*}
u_{\e_{j}}(x)&=v_{\e_{j}}\left(\frac{x}{\e_{j}}\right)=\tilde{v}_{\e_{j}}\left(\frac{x}{\e_{j}}-y_{\e_{j}}\right) \\
&\leq \frac{\tilde{C}}{1+|\frac{x}{\e_{j}}-y_{\e_{j}}|^{N+2s}} \\
&=\frac{\tilde{C} \e_{j}^{N+2s}}{\e_{j}^{N+2s}+|x- \e_{j} y_{\e_{j}}|^{N+2s}} \\
&\leq \frac{\tilde{C} \e_{j}^{N+2s}}{\e_{j}^{N+2s}+|x-x_{\e_{j}}|^{N+2s}}.
\end{align*}
This ends the proof of Theorem \ref{teorema principale}.

\smallskip

\noindent
{\bf Acknowledgements.} 
The author warmly thanks the anonymous referee for her/his useful and nice comments on the paper. 
The manuscript has been carried out under the auspices of the INDAM - Gnampa Project 2017 titled:{\it Teoria e modelli per problemi non locali}.






\end{document}